\newtheorem{theorem}{Theorem}[section]
\newtheorem*{theorem*}{Theorem}
\newtheorem{lemma}[theorem]{Lemma}
\newtheorem*{lemma*}{Lemma}
\newtheorem{corollary}[theorem]{Corollary}
\newtheorem{proposition}[theorem]{Proposition}
\newtheorem{remark}[theorem]{Remark}
\newtheorem{definition}[theorem]{Definition}
\newcommand{\hh}{\mathcal{H}}
\newcommand{\LL}{\mathcal{L}}
\newcommand{\F}{\mathcal{F}}
\newcommand{\A}{\mathcal{A}}
\newcommand{\W}{\mathcal{W}}
\newcommand{\C}{\mathbb{C}}
\newcommand{\mrx}{\mathcal{X}^{\circ}}
\newcommand{\X}{\mathcal{X}}
\newcommand{\talpha}{\tilde{\alpha}}
\begin{document}
\unitlength=1mm
\special{em:linewidth 0.4pt}
\linethickness{0.4pt}
\title[Free-Boolean]{ Free-Boolean independence for  pairs of algebras}
\author{Weihua Liu}
\maketitle
\begin{abstract}

 We  construct  pairs of algebras with mixed independence relations by using truncations of reduced free products of algebras. 
  For example,   we construct free-Boolean pairs of algebras and free-monotone pairs of algebras. 
We also introduce free-Boolean cumulants and show that free-Boolean independence is equivalent to the vanishing of mixed cumulants.
\end{abstract}

\section{Introduction}

In noncommutative probability,  independence relations between random variables  provide specific rules for calculations of  all mixed moments of those random variables.  
Most  independence relations are realized by certain universal products of subalgebras.   
Among those independence relations which arise from universal products,   the strongest one is the classical independence and the tensor product is the corresponding universal product. 
Here,  strongest means that the universal product satisfies the greatest number of axioms, for instance,  commutativity of subalgebras,  commutativity of the universal product,  unital,  associativity.  
By decreasing the number of axioms,  we obtain additional universal products and independence relations.
 It is shown in \cite{Sp2} that  there are  exactly three commutative and associative universal products:  tensor,  free and Boolean.  
  Here,   commutative means that the product does not depend on the order of the subalgebras.  
   Once we remove the condition of commutativity,  we abtain two additional universal products, namely the  monotone and the anti-monotone products \cite{Mu}.  
   If we drop the associativity requirement, then there are infinitely many universal products.  
   For example, the interpolated free product which is studied by  Franz and Lenczewski \cite{FL} and the  hierarchy of monotone products   which is studied by Lenczewski and Sa\l apata \cite{LS}.
    Another important one is the $c-$free product  which plays an important role in the subordination property of free convolution \cite{Lenczewski}.  
    Except for the tensor product, all  universal products can be constructed via certain truncations of the reduced product of algebraic probability spaces. We review these constructions in Section 2.    

Recently,  Voiculescu\cite{Voi1} introduced  bi-free probability for pairs of faces thus generalizing free probability.   
The idea  is to study, at the same time, left  and right operators on reduced free products  of vector spaces with specified vectors. 
The combinatorial aspects of bi-free product are studied in \cite{CNS,MN}, where bi-free cumulants are introduced and bi-freeness is characterized by the vanishing of mixed cumulants.
More recently,  more independence relations for pairs of algebras are studied. For example
in \cite{GHS, GS, GS1},   conditionally bi-free independence, bi-Boolean independence, bi-monotone independence are introduced and studied.

One purpose of this paper is to  introduce pairs of families of  random variables  that display different independence relations. For instance, the left face of the random variables  are freely independent and the right face of the random variables  are Boolean independent.  
The construction of mixed independent pairs of algebras is obtained from Voiculescu's  bifree independence by  truncations of the reduced free product of probability spaces.  
We can also define bi-Boolean and bi-monotone independence with our construction.  
However, our definition of bi-Boolean is trivial and is different from the bi-Boolean case in \cite{GS}.  
On the other hand,  the bi-monotone independence relation in our sense is the same as the type I bi-monotone independence in \cite{GHS}.

Among the family of pairs of algebras with different independence relations,  the free-Boolean one is quite interesting because that the free-Boolean product is commutative and associative.  
Therefore, we can construct free-Boolean exchangeable sequences  and we expect a vanishing-cumulant theorem \cite{Lehner}. 
As in the combinatorial theory of free probability, we introduce an associated family of partitions which we call interval-noncrossing partitions.  
We show that the family of interval-noncrossing partitions has a natural lattice structure. 
Then, with the help of the M\"obius inversion functions,  we  define free-Boolean cumulants and  prove a vanishing-cumulant condition for free-Boolean independence.   This allows us to obtain the central limit law  for free-Boolean independence.

The paper is organized as follows: 
 In Section 2, we review the construction of the bi-free product. 
  We show all associative universal products, with the exception of tensor product, can be obtained by restricting   to certain subspaces of the reduced free product  of vector spaces with specified vectors. Therefore, by compressing left faces and right faces of pairs of random variables to certain subspaces will give mixed independent pairs of random variables.
 In Section 3,  we introduce the precise notions for  mixed independence relations 
  In Section 5,  we introduce a notion of interval-noncrossing partitions which will be used  to study free-Boolean pairs of algebras. We also study the lattice structure of sets of interval-noncrossing partitions.
  In Section 4,  we give an equivalent definition for free-Boolean independence relation via conditions of mixed moments.
  In Section 6,  we study  the M\"obius inversion functions on the lattice of interval noncrossing partitions.  Free-Boolean cumulants and combinatorially free-Boolean independence are introduced.
  In Section 7,  we show that combinatorially free-Boolean independence is equivalent to the algebraically free-Boolean independence.
  In Section 8, as an application of the main theorem in Section 6,  we will study the free-Boolean central limit laws.

\section{Preliminaries}

We begin by recalling  the left regular representations and the right regular representations on reduced free product spaces with specified vectors.   The reader is  referred to \cite{Voi1} for further information.

\begin{definition}\normalfont  A \emph{vector space with a specified vector} is a  triple $(\X,\mrx, \xi)$ where $\mathcal{X}$ is a vector space, $\X$ is a  subspace of $\mathcal{X}$ codimension $1$ and $\xi\in\mathcal{X}\setminus\mrx$.
\end{definition}

Given a  vector space with a specified vector $(\mathcal{X},\mrx, \xi)$. Observe that $\X=\C\xi\oplus\mrx $,  there exists a unique linear functional $\phi$ on $\mathcal{X}$  such that $\phi(\xi)=1$  and $\ker(\phi)=\mrx$.  We  denote by $\mathcal{L}(\mathcal{X})$ the algebra of linear operators on $\mathcal{X}$ and we define a  linear functional $\phi_\xi:\LL(\X)\rightarrow \C$ such that $\phi_\xi(T)=\phi(T\xi), T\in \LL(X)$.  

Given a family of vector spaces with specified vectors  $(\X_i, \mrx_i,\xi_i)_{i\in I}$ , the  reduced free product  space $(\mathcal{X}, \mrx,\xi)=\underset{i\in I}{*}(\X_i, \mrx_i,\xi_i)$ is given by $\X=\C\xi\oplus\mrx$ where
 $$\mrx=\bigoplus\limits_{n\geq 1}\Big(\bigoplus\limits_{i_1\neq i_2\neq\cdots \neq i_n} \mathring{\mathcal{X}_{i_1}}\otimes\cdots \otimes\mrx_{i_n}\Big). $$

For every $i\in I$, we let
$$\mathcal{X}(\ell,i)=\mathbb{C}\xi\oplus\bigoplus\limits_{n\geq 1}(\bigoplus\limits_{i_1\neq i_2\neq\cdots \neq i_n, i_1\neq i} \mathring{\mathcal{X}_{i_1}}\otimes\cdots \otimes\mrx_{i_n})$$
and
$$\mathcal{X}(r,i)=\mathbb{C}\xi\oplus\bigoplus\limits_{n\geq 1}(\bigoplus\limits_{i_1\neq i_2\neq\cdots \neq i_n, i_n\neq i} \mathring{\mathcal{X}_{i_1}}\otimes\cdots \otimes\mrx_{i_n}).$$
As was shown in  \cite{Voi1}, there are natural  linear isomorphisms: $V_i:\mathcal{X}_i\otimes \mathcal{X}(\ell,i) \rightarrow \mathcal{X}$ and $W_i: \mathcal{X}(r,i)\otimes\mathcal{X}_i \rightarrow \mathcal{X}$.
Therefore,  for each $i\in I$, the algebra $\mathcal{L}(\mathcal{X}_i)$ has a left representation $\lambda_i$ and a right representation $\rho_i$, on $\X$, which are given by
$$\lambda_i(T)=V_i(T\otimes I_{\mathcal{X}(\ell,i)})V_i^{-1}$$
and 
$$\rho_i(T)=W_i(I_{\mathcal{X}(r,i)}\otimes T )W_i^{-1}$$ 
for every $T\in \mathcal{L}(\mathcal{X}_i)$, where $I_{\mathcal{X}(r,i)}$ and $I_{\mathcal{X}(\ell,i)}$ are the identity operators on ${\mathcal{X}(r,i)}$ and ${\mathcal{X}(\ell,i)}$ respectively.

In noncommutative probability, except the tensor product, all the other four associative universal products  have connections to  reduced free products of vector spaces with specified vectors.  A \emph{noncommutative probability space} is a pair $(\A,\phi)$ where $\A$ is an algebra and $\phi$ is a linear functional on $\A$ such that $\phi(1_{\A})=1$. 
The independence relations associated with the universal products are defined as follows:
\begin{definition}\normalfont   Let $I$ be an index set and  $(\A,\phi)$ be a noncommutative probability space.\\
 A family of unital subalgebras $\{\A_i\}_{i\in I}$ of $\A$  said to be \emph{freely independent}  if
$$\phi(x_1\cdots x_n)=0,$$
whenever $x_k\in\A_{i_k}$, $i_k\neq i_{k+1}$ and $\phi(x_k)=0$ for all k. \\
A family of (not necessarily unital) subalgebras $\{\A_i|i\in I \}$ of $\A$ is said to be \emph{Boolean independent} if
$$\phi(x_1x_2\cdots x_n)= \phi(x_1)\phi(x_2)\cdots \phi(x_n),$$
whenever $x_k\in\A_{i_k}$, $i_k\neq i_{k+1}$ for all $k$. \\
Suppose that $I$ is totally ordered. The family of subalgebras $\{\A_i\}_{i\in I}$  is said to be monotone independent if 
$$\phi(x_1\cdots x_{k-1}x_kx_{k+1}\cdots x_n)=\phi(x_k)\phi(x_1\cdots x_{k-1}x_{k+1}\cdots x_n)$$
whenever $x_j\in\A_{i_j}$,  $i_j\neq i_{j+1}$ and $i_{k-1}<i_k>i_{k+1}$. \\
By reversing the order of $I$, we will have the so called anti-monotone independence relation.\\
A set of random variables $\{x_i\in\A|i\in I\}$ is said to be freely(respectively Boolean, monotone) independent if the family of unital(respectively non-unital) subalgebras $\A_i$, which are generated by $x_i$'s respectively, are freely (respectively Boolean, monotone) independent.
\end{definition} 
\begin{remark}\normalfont In the preceding definition, \lq\lq non-unital\rq\rq means that the subalgebra does not contain the unit of  $\A$. 
\end{remark}
  
Now, we briefly exhibit some relations between independence relations and truncated reduced products: 

{\bf Free independence:} Suppose that $(\mathcal{X}, \mrx,\xi)=\underset{i\in I}{*}(\X_i, \mrx_i,\xi_i)$ , then the family $\{\lambda_i(\mathcal{L}(\X_i))\}_{i\in I}$  of subalgebras of $\mathcal{L}(\X)$ is  freely independent in $(\mathcal{L}(\X),\phi_{\xi})$.  Similarly,  $\{\rho_i(\mathcal{L}(\X_i))\}_{i\in I}$ is a family of freely independent subalgebras in $(\mathcal{L}(\X),\phi_{\xi})$.

{\bf Boolean independence:}   Given a family of vector spaces with specified vectors  $(\mathcal{X}_i, \mrx_i,\xi_i)_{i\in I}$,  the Boolean product space is defined as $(\mathcal{X}_\uplus, \mrx_\uplus,\xi)=\underset{i\in I}{\uplus}(\mathcal{X}_i, \mrx_i,\xi_i)$ where $\X_\uplus=\mathbb{C}\xi\oplus\mrx_\uplus$ and
$$ \mrx_\uplus= \bigoplus\limits_{i\in I}\mrx_i.$$ 
For each $i\in I$, there is a natural projection $P_{\uplus,i}:\X_{\uplus}\rightarrow \C\xi\oplus\mrx_i$ such that $P_{\uplus,i}(x)=x$ if $x\in\C\xi\oplus\mrx_i$ and $P_{\uplus,i}(x)=0$ if $x\in  \bigoplus\limits_{j\neq i }\mathring{\mathcal{X}_j}$.  Since  $\C\xi\oplus\mrx_i$ can be identified with $\X_i$, we can define a  linear map $\alpha_i: \LL(\mathcal{X}_i) \rightarrow B(\mathcal{X}_{\uplus})$  as for follows:  $$\alpha_i(T)=P_{\uplus,i}TP_{\uplus,i},$$
for  every $T\in \LL(\X_i)$.  It is obvious that $\alpha_i$ is a algebra homomorphism but not  unital.

\begin{proposition}\normalfont  Given a family of vector spaces with specified vectors  $(\mathcal{X}_i, \mrx_i,\xi_i)_{i\in I}$, let  $(\mathcal{X}_\uplus, \mrx_\uplus,\xi)$ be the Boolean product of them. Then,  the family $\{\alpha_i(\LL(\X_i))\}$ is Boolean independent with respect to $\phi_\xi$, where $\phi_\xi$ is the linear functional associated with $(\mathcal{X}_\uplus, \mrx_\uplus,\xi)$.
\end{proposition}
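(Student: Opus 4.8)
The plan is to unwind the definition of Boolean independence directly. Fix $T_1,\dots,T_n$ with $T_k\in\LL(\X_{i_k})$ and $i_k\neq i_{k+1}$ for all $k$, and put $x_k=\alpha_{i_k}(T_k)=P_{\uplus,i_k}T_kP_{\uplus,i_k}$; since each $\alpha_i$ is an algebra homomorphism, every element of $\alpha_{i_k}(\LL(\X_{i_k}))$ has this form, so it suffices to show $\phi_\xi(x_1\cdots x_n)=\phi_\xi(x_1)\cdots\phi_\xi(x_n)$. I will compute $\phi_\xi(x_1\cdots x_n)=\phi\big((x_1\cdots x_n)\xi\big)$ by applying the operators to $\xi$ one at a time, starting from the right.

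First I record two elementary facts about a single factor $\alpha_i(T)$, both immediate from $\alpha_i(T)=P_{\uplus,i}TP_{\uplus,i}$ together with the identification of $\C\xi\oplus\mrx_i$ with $\X_i$ (sending $\xi$ to $\xi_i$):  (a) $\alpha_i(T)$ annihilates $\mrx_j$ for every $j\neq i$, because the inner $P_{\uplus,i}$ does;  and (b) writing $T\xi_i=\phi_{\xi_i}(T)\,\xi_i+\eta$ with $\eta\in\mrx_i$, one obtains $\alpha_i(T)\xi=\phi_{\xi}(\alpha_i(T))\,\xi+\eta$ with the same $\eta\in\mrx_i$, since $P_{\uplus,i}\xi=\xi$, the outer $P_{\uplus,i}$ fixes $\C\xi\oplus\mrx_i$ pointwise, and applying $\phi$ gives $\phi_\xi(\alpha_i(T))=\phi_{\xi_i}(T)$. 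In particular $\alpha_i(T)\xi\in\C\xi\oplus\mrx_i$.

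The main step is a downward induction on $k$ establishing
$$ x_k x_{k+1}\cdots x_n\,\xi \;=\; \Big(\prod_{j=k}^{n}\phi_\xi(x_j)\Big)\xi \;+\; \Big(\prod_{j=k+1}^{n}\phi_\xi(x_j)\Big)\eta_k $$
for some $\eta_k\in\mrx_{i_k}$, with empty products read as $1$. The base case $k=n$ is exactly fact (b). For the inductive step, by hypothesis $x_{k+1}\cdots x_n\,\xi$ is a scalar multiple of $\xi$ plus a multiple of a vector $\eta_{k+1}\in\mrx_{i_{k+1}}$; the rightmost projection $P_{\uplus,i_k}$ occurring inside $x_k$ kills the $\mrx_{i_{k+1}}$–component by fact (a) (here $i_k\neq i_{k+1}$ is used), leaving a scalar multiple of $\xi$; applying $\alpha_{i_k}(T_k)$ to this and invoking fact (b) once more yields the claimed form at level $k$. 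Taking $k=1$ and then applying $\phi$, the $\mrx_{i_1}$–component vanishes because $\mrx_\uplus\subseteq\ker\phi$, and we are left with $\phi_\xi(x_1\cdots x_n)=\prod_{j=1}^{n}\phi_\xi(x_j)$, which is precisely Boolean independence of $\{\alpha_i(\LL(\X_i))\}_{i\in I}$.

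The argument is essentially the standard verification that the Boolean product space realizes Boolean independence, so no genuine difficulty is expected; the only point requiring care is the bookkeeping with the identification $\C\xi\oplus\mrx_i\cong\X_i$ and the insistence, in the inductive hypothesis, that the remainder vector at stage $k$ lie in $\mrx_{i_k}$ — this is exactly what makes the next projection remove it, and it is why the alternating condition $i_k\neq i_{k+1}$ is all that is needed (no centering of the $x_k$'s).
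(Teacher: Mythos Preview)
Your proof is correct and follows essentially the same approach as the paper's: both arguments apply the operators to $\xi$ from the right, use that $x_k\xi-\phi_\xi(x_k)\xi\in\mrx_{i_k}$ is annihilated by $x_{k-1}$ when $i_{k-1}\neq i_k$, and conclude by induction. Your version is slightly more explicit in tracking the remainder term $\eta_k$ and its scalar coefficient at each stage, whereas the paper compresses this into the single relation $x_{k-1}x_k\cdots x_n\xi=\phi_\xi(x_k)\,x_{k-1}x_{k+1}\cdots x_n\xi$ (in effect your induction with the $\eta$-term already absorbed), but the underlying mechanism is identical.
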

\begin{proof}
Let $\{x_1 ,\cdots,x_n\}$ be a sequence of linear maps on $\X_\uplus$ such that $x_k\in\alpha_{i_k}(\LL(\X_{i_k}))$ and $i_1\neq \cdots \neq i_n$.  
Notice that $x_n\xi-\phi_\xi(x_n)\xi\in\mrx_{i_n}$,  then $x_{n-1}(x_n\xi-\phi_\xi(x_n)\xi)=0$. We have
$$x_{n-1}x_n\xi=x_{n-1}(\phi(x_n)\xi +(x_n\xi-\phi_\xi(x_n)\xi)=\phi_\xi(x_n)x_{n-1}\xi.$$
It follows by induction that 
$$\phi(x_1\cdots x_n)=\phi(x_1\cdots x_{n-1})\phi(x_n)=\cdots=\phi(x_1)\phi(x_2)\cdots \phi(x_n).$$
\end{proof}

We see  that the Boolean product space $\X_\uplus$ is a subspace of the reduced product $\X$. Actually, we have $$\X=\X_\uplus\oplus\bigoplus\limits_{n\geq 2}\Big(\bigoplus\limits_{i_1\neq i_2\neq\cdots \neq i_n} \mathring{\mathcal{X}_{i_1}}\otimes\cdots \otimes\mathring{\mathcal{X}_{i_n}}\Big). $$ 
Thus, there is a projection $P_\uplus: \X\rightarrow \X_\uplus$  such that $P_\uplus(x)=x$ whenever $x\in \X_\uplus$ and $P_\uplus(x)=0$ if $x\in\oplus\bigoplus\limits_{n\geq 2}\Big(\bigoplus\limits_{i_1\neq i_2\neq\cdots \neq i_n} \mathring{\mathcal{X}_{i_1}}\otimes\cdots \otimes\mathring{\mathcal{X}_{i_n}}\Big)$.  Therefore, $P_{\uplus,i}$ can be defined on $\X$ such that $P_{\uplus,i}=P_{\uplus,i}P_{\uplus}$ and we have 
  $$ \alpha_i(T)= P_{\uplus,i}\lambda_i(T)P_{\uplus,i}=P_{\uplus,i}\rho_i(T)P_{\uplus,i}$$
  for all $i$, $T\in \LL(\X_i)$.   

\begin{corollary}\normalfont   Given a family of vector spaces with specified vectors  $(\mathcal{X}_i, \mrx_i,\xi_i)_{i\in I}$ .
Then,  the family $\{P_{\uplus,i}\lambda_i(\LL(\X_i))P_{\uplus,i}\}_{i\in I}$=$\{P_{\uplus,i} \rho_i(\LL(\X_i))P_{\uplus,i}\}_{i\in I}$ are Boolean independent with respect to $\phi_\xi$, where $\phi_\xi$ is the linear functional associated with $(\X,\mrx,\xi)$. 
\end{corollary}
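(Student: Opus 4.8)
My plan is to reduce the statement to the preceding Proposition by means of the two identities displayed just before it. Recall that, once $P_{\uplus,i}$ has been extended to an operator on $\X$ via $P_{\uplus,i}=P_{\uplus,i}P_\uplus$, one has $\alpha_i(T)=P_{\uplus,i}\lambda_i(T)P_{\uplus,i}=P_{\uplus,i}\rho_i(T)P_{\uplus,i}$ for every $T\in\LL(\X_i)$. So the first thing to note is that the two families in the statement are literally one and the same family of operators on $\X$, namely $\{\alpha_i(\LL(\X_i))\}_{i\in I}$ (this also makes the asserted set equality automatic). It therefore suffices to prove that $\{\alpha_i(\LL(\X_i))\}_{i\in I}$ is Boolean independent in $(\LL(\X),\phi_\xi)$, where now $\phi_\xi$ is the functional attached to the full reduced free product $(\X,\mrx,\xi)$.

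Next I would record an invariance property: each $\alpha_i(T)$, regarded as an operator on $\X$, has range inside the range of $P_{\uplus,i}$, which is $\C\xi\oplus\mrx_i\subseteq\X_\uplus$. Hence every $\alpha_i(T)$ maps $\X$ into $\X_\uplus$ and, in particular, maps the subspace $\X_\uplus$ into itself; moreover its restriction to $\X_\uplus$ is exactly the operator $\alpha_i(T)=P_{\uplus,i}TP_{\uplus,i}\in B(\X_\uplus)$ that appears in the Proposition. Consequently, for any word $x_1\cdots x_n$ with $x_k\in\alpha_{i_k}(\LL(\X_{i_k}))$, the restriction of the product to $\X_\uplus$ is the product of the restrictions $x_k|_{\X_\uplus}$, and $x_1\cdots x_n\,\xi\in\X_\uplus$ since $\xi\in\X_\uplus$.

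It then remains to check compatibility of the functionals. The linear functional $\phi$ on $\X$ determined by $\phi(\xi)=1$ and $\ker\phi=\mrx$ restricts on the subspace $\X_\uplus=\C\xi\oplus\mrx_\uplus$ to a functional sending $\xi\mapsto1$ and annihilating $\mrx_\uplus$ (using $\mrx_\uplus=\bigoplus_i\mrx_i\subseteq\mrx$); by uniqueness this restriction is the functional associated with $(\X_\uplus,\mrx_\uplus,\xi)$. Thus, for $x_k$ as above with $i_1\neq\cdots\neq i_n$,
$$\phi_\xi(x_1\cdots x_n)=\phi\big(x_1\cdots x_n\,\xi\big)=\phi_\xi\big(x_1|_{\X_\uplus}\cdots x_n|_{\X_\uplus}\big)=\phi_\xi(x_1)\cdots\phi_\xi(x_n),$$
where the middle equality holds because $x_1\cdots x_n\,\xi\in\X_\uplus$ and the functionals agree there, and the last equality is the Proposition (the single-letter values $\phi_\xi(x_k)$ being likewise unchanged under restriction, for the same reason). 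This is precisely Boolean independence of $\{\alpha_i(\LL(\X_i))\}_{i\in I}$ in $(\LL(\X),\phi_\xi)$.

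The content here is essentially bookkeeping about ambient spaces: the point to get right is that the operators $\alpha_i(T)$ genuinely leave $\X_\uplus$ invariant and act on it exactly as in the Proposition, and that $\phi_\xi$ restricts compatibly, so that the ``free part'' $\bigoplus_{n\geq2}(\bigoplus_{i_1\neq\cdots\neq i_n}\mrx_{i_1}\otimes\cdots\otimes\mrx_{i_n})$ of $\X$ never contributes. If one prefers not to invoke the Proposition, the same conclusion follows by rerunning its inductive argument verbatim inside $\LL(\X)$: for $x_k=\alpha_{i_k}(T_k)$ one has $x_n\xi-\phi_\xi(x_n)\xi\in\mrx_{i_n}$, and since $x_{n-1}$ ends on the right with $P_{\uplus,i_{n-1}}$ and $i_{n-1}\neq i_n$, this projection kills $\mrx_{i_n}$, so $x_{n-1}x_n\xi=\phi_\xi(x_n)\,x_{n-1}\xi$; induction then yields $\phi_\xi(x_1\cdots x_n)=\phi_\xi(x_1)\cdots\phi_\xi(x_n)$.
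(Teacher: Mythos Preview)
Your proposal is correct and follows exactly the approach the paper intends: the Corollary is stated without proof because it is meant to follow immediately from the displayed identity $\alpha_i(T)=P_{\uplus,i}\lambda_i(T)P_{\uplus,i}=P_{\uplus,i}\rho_i(T)P_{\uplus,i}$ together with the preceding Proposition. You have simply made explicit the (straightforward) compatibility of ambient spaces and functionals that the paper leaves implicit.
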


{\bf Monotone independence}
As in the case of  Boolean independence, we can use certain  projections on $(\mathcal{X}, \mrx,\xi)=\underset{i\in I}{*}(\X_i, \mrx_i,\xi_i)$  to construct monotone independent families of subalgebras. Suppose that $I$ is an ordered set,  the monotone product space $(\mathcal{X}_{\rhd}, \mrx,\xi)=\underset{i\in I}{\rhd}(\mathcal{X}_i, \mrx_i,\xi_i)$ is defined by
$$ \mathcal{X}_{\rhd}=\mathbb{C}\xi\oplus\bigoplus\limits_{n\geq 1}\Big(\bigoplus\limits_{i_1> i_2>\cdots > i_n} \mathring{\mathcal{X}_{i_1}}\otimes\cdots \otimes\mathring{\mathcal{X}_{i_n}}\Big) .$$
Let 
$$\X(\rhd,i)=\C\xi\oplus\bigoplus\limits_{n\geq 1}\Big(\bigoplus\limits_{i=i_1> i_2>\cdots > i_n} \mathring{\mathcal{X}_{i_1}}\otimes\cdots \otimes\mathring{\mathcal{X}_{i_n}}\Big)$$
and 
$$\X(\rhd,\ell,i)=\C\xi\oplus\bigoplus\limits_{n\geq 1}\Big(\bigoplus\limits_{i> i_1>\cdots > i_n} \mathring{\mathcal{X}_{i_1}}\otimes\cdots \otimes\mathring{\mathcal{X}_{i_n}}\Big).$$

We view the above spaces as subspaces of the reduced free product of $(\mathcal{X}_i, \mrx_i,\xi_i)_{i\in I}$.  Moreover, for each $i$, $\X(\rhd,\ell,i)$ is a subspace of $\X(\ell,i)$. There is a natural isomorphism from  $\X_i\otimes \X(\rhd,\ell,i)$ to  $\X(\rhd,i)$ such that 
$$ \xi_i\otimes \xi\rightarrow \xi$$
$$ \mrx_i\otimes \xi\rightarrow \mrx_i$$
$$ \xi_i\otimes \mathring{\mathcal{X}_{i_1}}\otimes\cdots \otimes\mathring{\mathcal{X}_{i_n}}\rightarrow \mathring{\mathcal{X}_{i_1}}\otimes\cdots \otimes\mathring{\mathcal{X}_{i_n}}$$
$$ \mrx_i\otimes (\mathring{\mathcal{X}_{i_1}}\otimes\cdots \otimes\mathring{\mathcal{X}_{i_n}})\rightarrow \mrx_i\otimes \mathring{\mathcal{X}_{i_1}}\otimes\cdots \otimes\mathring{\mathcal{X}_{i_n}}.$$
Actually, the above isomorphism is the restriction of   $V_i$ to $\X_i\otimes \X(\rhd,\ell,i)$.  For each $i$,  there is a projection
$P_{\rhd,i}:\X\rightarrow \X(\rhd,i)$ 
such that $P_{\rhd,i}(x)=x$ if $x\in\X(\rhd,i)$ and $P_{\rhd,i}(x)=0$ on the other summands .  Therefore, for each $i$,   $\mathcal{L}(\X_i)$ has a representation $\lambda'_i$ on $\X$ such that 
$$\lambda'_i(T)=P_{\rhd,i}V_i(T\otimes I_{\X(\rhd,\ell,i)})V_i^{-1}P_{\rhd,i}$$
for all $T\in \LL(\X_i)$.
\begin{proposition} \label{construction of monotone }\normalfont  Given a family of vector spaces with specified vectors  $(\mathcal{X}_i, \mrx_i,\xi_i)_{i\in I}$, where $(I,>)$ is a totally ordered index set, let  $(\mathcal{X}, \mrx,\xi)=\underset{i\in I}{\rhd}(\mathcal{X}_i, \mrx_i,\xi_i)$ be their monotone product. Then,  the family $\{\lambda'_i(\LL(\X_i))\}$ is monotone independent with respect to $\phi_\xi$, where $\phi_\xi$ is the linear functional associated with $(\X,\mrx,\xi)$.
\end{proposition}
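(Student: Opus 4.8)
The plan is to derive, straight from the construction of $\lambda'_i$, the exact action of $\lambda'_i(T)$ on $\xi$ and on the decreasing simple tensors that, together with $\xi$, span $\X_{\rhd}$, and then to peel off the peak at position $k$ by applying $x_1,\dots,x_n$ to $\xi$ one factor at a time from the right.

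\emph{Step 1: the action of $\lambda'_i(T)$.} Fix $i\in I$ and $T\in\LL(\X_i)$, and decompose $T\xi_i=\phi_{\xi_i}(T)\,\xi_i+\zeta_T$ with $\zeta_T\in\mrx_i$. Resolving the isomorphism $V_i$ and the truncating projection $P_{\rhd,i}$ into their effect on each vector, I would verify that $\lambda'_i(T)$ acts on $\xi$ and on a decreasing simple tensor $\eta=\eta_{i_1}\otimes\cdots\otimes\eta_{i_m}$ (so $\eta_{i_l}\in\mrx_{i_l}$ and $i_1>\cdots>i_m$; call $i_1$ its leading index) according to the trichotomy: \emph{(a)} if $\eta=\xi$ or $i_1<i$, then $\lambda'_i(T)\eta=\phi_{\xi_i}(T)\,\eta+\zeta_T\otimes\eta$, where $\zeta_T\otimes\eta$ is a decreasing tensor of leading index $i$ (it is $\zeta_T\in\mrx_i$ itself when $\eta=\xi$); \emph{(b)} if $i_1>i$, then $\lambda'_i(T)\eta=0$; \emph{(c)} if $i_1=i$, then $\lambda'_i(T)\eta$ is the sum of a scalar multiple of $\eta_{i_2}\otimes\cdots\otimes\eta_{i_m}$ (equal to $\xi$ if $m=1$, of leading index $i_2<i$ otherwise) and of a decreasing tensor of leading index $i$. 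Two consequences will be used below: first, $\lambda'_i(T)$ carries $\X_{\rhd}$ into $\C\xi\oplus\bigoplus_{n\ge 1}\bigoplus_{i\ge i_1>\cdots>i_n}\mrx_{i_1}\otimes\cdots\otimes\mrx_{i_n}$, so only the vacuum and decreasing tensors of leading index $\le i$ occur in its range; second, putting $\eta=\xi$ in (a) and applying $\phi$ (which annihilates every decreasing tensor) gives $\phi_\xi(\lambda'_i(T))=\phi(\lambda'_i(T)\xi)=\phi_{\xi_i}(T)$.

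\emph{Step 2: peeling off the peak.} Write $x_j=\lambda'_{i_j}(T_j)$, assume $i_j\ne i_{j+1}$ for all $j$ and $i_{k-1}<i_k>i_{k+1}$, and set $v=x_{k+1}\cdots x_n\,\xi$, with $v=\xi$ if $k=n$. By the range statement of Step 1 applied to the outermost factor $x_{k+1}$ (trivially so if $k=n$), $v$ is a linear combination of $\xi$ and decreasing tensors, each of leading index $\le i_{k+1}<i_k$; hence case (a) governs the action of $x_k$ on every component of $v$, and summing yields
$$x_k v=\phi_{\xi_{i_k}}(T_k)\,v+w,$$
with $w$ a linear combination of decreasing tensors each of leading index $i_k$. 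If $k\ge 2$, then since $i_{k-1}<i_k$ case (b) gives $x_{k-1}w=0$, hence $x_{k-1}x_k v=\phi_{\xi_{i_k}}(T_k)\,x_{k-1}v$; applying $x_1\cdots x_{k-2}$, then $\phi$, and recalling $v=x_{k+1}\cdots x_n\,\xi$, I get
$$\phi_\xi(x_1\cdots x_n)=\phi_{\xi_{i_k}}(T_k)\,\phi\big(x_1\cdots x_{k-1}x_{k+1}\cdots x_n\,\xi\big)=\phi_{\xi_{i_k}}(T_k)\,\phi_\xi(x_1\cdots x_{k-1}x_{k+1}\cdots x_n).$$
If $k=1$, one argues directly: $\phi_\xi(x_1\cdots x_n)=\phi(x_1v)=\phi_{\xi_{i_1}}(T_1)\,\phi(v)+\phi(w)=\phi_{\xi_{i_1}}(T_1)\,\phi_\xi(x_2\cdots x_n)$, since $\phi$ kills the decreasing tensors comprising $w$. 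In either case the right-hand side equals $\phi_\xi(x_k)\,\phi_\xi(x_1\cdots x_{k-1}x_{k+1}\cdots x_n)$ by Step 1; since $\lambda'_i$ is a representation (as recorded just before the statement) which preserves $\X_{\rhd}$ by Step 1, this is precisely the monotone relation for the family $\{\lambda'_i(\LL(\X_i))\}$ with respect to $\phi_\xi$.

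\emph{Main obstacle.} The only real computation is Step 1 --- carefully tracing $V_i$ and the truncation $P_{\rhd,i}$ through the three cases to extract the trichotomy (a)--(c); everything afterwards is linear bookkeeping. I would note that the argument nowhere uses $i_{k-1}\ne i_{k+1}$, and that no separate treatment of the endpoints $k=1$ and $k=n$ is needed beyond what is written above (the case $k=n$ is subsumed, with $v=\xi$, under the $k\ge 2$ branch).
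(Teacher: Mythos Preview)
Your proposal is correct and follows essentially the same route as the paper's proof: both compute $x_k$ applied to $v=x_{k+1}\cdots x_n\xi$ by recognizing that $v$ lies in $\X(\rhd,\ell,i_k)$ (your ``leading index $\le i_{k+1}<i_k$''), split the result as $\phi_{\xi_{i_k}}(T_k)v$ plus a term with leading index $i_k$, and then kill that extra term with $x_{k-1}$ via the projection $P_{\rhd,i_{k-1}}$. Your trichotomy (a)--(c) is just a slightly more systematic packaging of the same direct computation the paper performs, and your explicit treatment of the endpoint $k=1$ goes a bit beyond what the stated definition of monotone independence (which assumes $i_{k-1}<i_k>i_{k+1}$, hence $1<k<n$) actually requires.
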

\begin{proof}
Let $\{x_1 ,\cdots,x_n\}$ be a sequence of linear maps in $\mathcal{\X}$ such that $x_j\in\lambda'_{i_j}(\LL(\X_{i_j}))$ and $i_1\neq \cdots \neq i_n$.  Assume that there is a $k$ such that $i_k> i_{k-1}$ and $i_k>i_{k+1}$. \\
Note that $x_{k+1}\cdots x_n\xi\in \X(\rhd,i_{k+1})\subseteq \X(\rhd,\ell ,i_{k})$.  Suppose that  $x_k=P_{\rhd,i_k}V_{i_k}(T\otimes id_{ \X(\rhd,\ell ,i_{k})}) V_{i_k}^{-1}P_{\rhd,i_k}$ for some $T\in \LL(\X_{i_k})$. Then 
$$
\begin{array}{rcl}
x_k(x_{k+1}\cdots x_n\xi)&=&P_{\rhd,i_k}V_{i_k}(T\otimes id_{ \X(\rhd,\ell,i_{k})}) V_{i_k}^{-1}P_{\rhd,i_k}(x_{k+1}\cdots x_n\xi)\\
&=&P_{\rhd,i_k}V_{i_k}(T\otimes id_{ \X(\rhd,\ell ,i_{k})})(\xi_{i_k}\otimes(x_{k+1}\cdots x_n\xi))\\
&=&P_{\rhd,i_k}V_{i_k}\big(\phi_{i_k}(T)\xi_{i_k}\otimes(x_{k+1}\cdots x_n\xi)+(T\xi_{i_k}-\phi_{i_k}(T)\xi_{i_k})\otimes(x_{k+1}\cdots x_n\xi))\\
&=&\phi_{i_k}(T)x_{k+1}\cdots x_n\xi+(T\xi_{i_k}-\phi_{i_k}(T)\xi_{i_k})\otimes(x_{k+1}\cdots x_n\xi)\\
\end{array}
$$
where $\phi_{i_k}$ is the linear functional associated with $(\X_{i_k},\mrx_{i_k},\xi_{i_k})$.
Therefore, 
$$
\begin{array}{rcl}
x_{k-1}x_k(x_{k+1}\cdots x_n\xi)&=&x_{k-1}(\phi_{i_k}(T)x_{k+1}\cdots x_n\xi+(T\xi_{i_k}-\phi_{i_k}(T)\xi_{i_k})\otimes(x_{k+1}\cdots x_n\xi))\\&=&\phi_{i_k}(T)x_{k-1}(x_{k+1}\cdots x_n\xi)+x_{k-1}(T\xi_{i_k}-\phi_{i_k}(T)\xi_{i_k})\otimes(x_{k+1}\cdots x_n\xi))\\
\end{array}
$$
Since $(T\xi_{i_k}-\phi_{i_k}(T)\xi_{i_k})\otimes(x_{k+1}\cdots x_n\xi))\in \bigoplus\limits_{n\geq 1}(\bigoplus\limits_{i_{k}=l_1>\cdots>l_n}\mrx_{l_1}\otimes\cdots\otimes\mrx_{l_n})$ of which no component is contained in the range of $P_{\rhd,i_{k-1}}$,  we have 
$$x_{k-1}x_k(x_{k+1}\cdots x_n\xi)=\phi_{i_k}(T)x_{k-1}(x_{k+1}\cdots x_n\xi).$$
Notice that $\phi_{\xi}(x_k)=\phi_{i_k}(T).$ We have 
$$\phi_{\xi}(x_1\cdots x_{k-1}x_kx_{k+1}\cdots x_n)=\phi_{\xi}(x_k)\phi(x_1\cdots x_{k-1}x_{k+1}\cdots x_n).$$
\end{proof}

A monotone independent  sequence of algebras can also be constructed by using the right decomposition of the  reduced products:
Given a family of vector spaces with specified  vectors  $(\mathcal{X}_i, \mrx,\xi_i)_{i\in I}$, the anti-monotone product space $(\mathcal{X}_{\lhd}, \mrx,\xi)=\underset{i\in I}{\lhd}(\mathcal{X}_i, \mrx,\xi_i)$ is given by 
$$ \mathcal{X}_{\lhd}=\mathbb{C}\xi\oplus\bigoplus\limits_{n\geq 1}\Big(\bigoplus\limits_{i_1< i_2<\cdots < i_n} \mathring{\mathcal{X}_{i_1}}\otimes\cdots \otimes\mathring{\mathcal{X}_{i_n}}\Big) .$$
In analogy to the monotone case, we get
$$\X(\lhd,i)=\C\xi\oplus\bigoplus\limits_{n\geq 1}\Big(\bigoplus\limits_{i_1< i_2<\cdots < i_n=i} \mathring{\mathcal{X}_{i_1}}\otimes\cdots \otimes\mathring{\mathcal{X}_{i_n}}\Big)$$
and 
$$\X(\lhd,r,i)=\C\xi\oplus\bigoplus\limits_{n\geq 1}\Big(\bigoplus\limits_{i_1<\cdots <i_n<i} \mathring{\mathcal{X}_{i_1}}\otimes\cdots \otimes\mathring{\mathcal{X}_{i_n}}\Big).$$
All the above spaces are viewed as  subspaces of  the reduced free product of  $(\mathcal{X}_i, \mrx_i,\xi_i)_{i\in I}$, and  $\X(\lhd,r,i)$ is a subspace of $\X(r,i)$. Therefore,  the restriction of $W_i$ to $ \X(\rhd,\ell,i)\otimes \X_i$ is an isomorphism onto $ \X(\rhd,i)$.
Let $P_{\lhd,i}$ be the projection from $\X$ to the subspace $\X(\lhd,i).$ Then, as  in the  monotone case, $\{P_{\lhd,i}\rho_i(\LL(\X_i))P_{\lhd,i}\}_{i\in I}$ is a monotone independent sequence of  algebras with respect to $\phi_{\xi}$.  On the other hand, for anti-monotone independence, we have the following result.

\begin{proposition} \normalfont  Given a family of vector spaces with specified vectors  $(\mathcal{X}_i, \mrx_i,\xi_i)_{i\in I}$, where $(I,>)$ is totally ordered set, let  $(\X,\mrx,\xi)$ be their reduced free product.   Then,  the families $\{P_{\rhd,i}\rho_i(\LL(\X_i))P_{\rhd,i}\}$  and the families $\{P_{\lhd,i}\lambda_i(\LL(\X_i))P_{\lhd,i}\}$ are anti-monotone independent families with respect to $\phi_\xi$, where $\phi_\xi$ is the linear functional associated with $(\X,\mrx,\xi)$.
\end{proposition}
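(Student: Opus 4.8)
The plan is to reproduce, for each of the two families, the kind of direct computation used in the proof of Proposition~\ref{construction of monotone } (and in the unnumbered claim just above this statement), but with the left and right regular representations interchanged and with the spaces $\X(\rhd,\cdot)$ and $\X(\lhd,\cdot)$ interchanged. This interchange converts ``a local maximum factors out'' into ``a local minimum factors out,'' which — recalling that anti-monotone independence is just monotone independence for the reversed order on $I$ — is exactly the anti-monotone relation. Moreover only one of the two families really needs a new argument: the other follows from it by the word-reversal symmetry of the reduced free product, as explained below.

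For $\{P_{\rhd,i}\rho_i(\LL(\X_i))P_{\rhd,i}\}$ I would fix a reduced word $x_1\cdots x_n$ with $x_j=P_{\rhd,i_j}\rho_{i_j}(T_j)P_{\rhd,i_j}$, $i_1\neq\cdots\neq i_n$, and a position $k$ with $i_{k-1}>i_k<i_{k+1}$, and compute $\phi_\xi(x_1\cdots x_n)=\phi\big((x_1\cdots x_n)\xi\big)$ by letting the operators act on $\xi$ from the right. Since $\rho_i$ is assembled from the right decomposition $W_i\colon\X(r,i)\otimes\X_i\to\X$, the operator $\rho_{i_j}$ only touches the rightmost letter of the word it is fed. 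The vector $v:=x_{k+1}\cdots x_n\xi$ lies in the range of $P_{\rhd,i_{k+1}}$, and the hypothesis $i_k<i_{k+1}$ forces the inner projection $P_{\rhd,i_k}$ sitting in $x_k$ to retain only the part of $v$ on which $\rho_{i_k}(T_k)$ acts through $\xi_{i_k}$ in the $\X_{i_k}$-slot (this is the analogue of the inclusion $\X(\rhd,i_{k+1})\subseteq\X(\rhd,\ell,i_k)$ exploited in the monotone case). Writing $T_k\xi_{i_k}=\phi_{i_k}(T_k)\xi_{i_k}+\big(T_k\xi_{i_k}-\phi_{i_k}(T_k)\xi_{i_k}\big)$ as in that proof, the non-scalar summand produces tensor words none of whose components survives the outer projection $P_{\rhd,i_{k-1}}$ occurring in $x_{k-1}$ — exactly the step ``of which no component is contained in the range of $P_{\rhd,i_{k-1}}$'' in the proof of Proposition~\ref{construction of monotone }. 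What is left is $\phi_{i_k}(T_k)=\phi_\xi(x_k)$ times the vector that $x_{k-1}\cdots x_1$ would have produced from $x_{k+1}\cdots x_n\xi$; taking the coefficient of $\xi$ yields $\phi_\xi(x_1\cdots x_n)=\phi_\xi(x_k)\,\phi_\xi(x_1\cdots x_{k-1}x_{k+1}\cdots x_n)$, which is the anti-monotone axiom.

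For $\{P_{\lhd,i}\lambda_i(\LL(\X_i))P_{\lhd,i}\}$ I would avoid redoing the computation. Let $J\colon\X\to\X$ be the linear involution fixing $\xi$ and sending $\mathring\eta_1\otimes\cdots\otimes\mathring\eta_m$ to $\mathring\eta_m\otimes\cdots\otimes\mathring\eta_1$; it is well defined (the alternation of indices is symmetric under reversal), it satisfies $\phi\circ J=\phi$, hence $\phi_\xi(JTJ)=\phi_\xi(T)$, it intertwines $\lambda_i$ with $\rho_i$, and it carries $\X(\rhd,i)$ onto $\X(\lhd,i)$, hence $JP_{\rhd,i}J=P_{\lhd,i}$. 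Consequently conjugation by $J$ turns $\{P_{\rhd,i}\rho_i(\LL(\X_i))P_{\rhd,i}\}$ into $\{P_{\lhd,i}\lambda_i(\LL(\X_i))P_{\lhd,i}\}$ while preserving every $\phi_\xi$-moment, so the second family is anti-monotone as soon as the first is. (One can also view both statements as ``the monotone constructions above, read for the reversed order on $I$,'' i.e. as monotone independence for the reversed order; but making the subspaces match literally again comes down to the bookkeeping of the second paragraph.) The main obstacle is precisely that bookkeeping: checking that at a local \emph{minimum}, and for the right (respectively left) representation, the inner projections retain exactly the $\xi_{i_k}$-component and the outer projections discard exactly the non-scalar part, the roles of the first and last letter of a word — and of the $\rhd$- and $\lhd$-spaces — having been exchanged relative to Proposition~\ref{construction of monotone }.
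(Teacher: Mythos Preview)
The paper states this proposition without proof, the implicit argument being ``the analogue of Proposition~\ref{construction of monotone }.'' Your plan follows that analogy, and the word-reversal involution $J$ is a clean device for reducing the second family to the first.

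There is, however, a genuine gap in your treatment of the first family. You assert that for $v=x_{k+1}\cdots x_n\xi\in\X(\rhd,i_{k+1})$ the inner projection $P_{\rhd,i_k}$ ``retains only the part of $v$ on which $\rho_{i_k}(T_k)$ acts through $\xi_{i_k}$,'' invoking an analogue of the inclusion $\X(\rhd,i_{k+1})\subseteq\X(\rhd,\ell,i_k)$ from the monotone proof. No such analogue exists here: every nonzero tensor word in $\X(\rhd,i_{k+1})$ has \emph{first} letter in $\mrx_{i_{k+1}}$, so it never lies in $\X(\rhd,i_k)$, regardless of whether $i_k<i_{k+1}$ or $i_k>i_{k+1}$. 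Thus $P_{\rhd,i_k}v$ is simply the $\xi$-component $c\xi$ of $v$, and your computation actually gives
\[
x_1\cdots x_{k-1}x_kv \;=\; c\,\phi_\xi(x_k)\,x_1\cdots x_{k-1}\xi,
\]
not $\phi_\xi(x_k)\,x_1\cdots x_{k-1}v$ as claimed. These two vectors coincide when $i_{k-1}\neq i_{k+1}$ (since then $P_{\rhd,i_{k-1}}v=c\xi$ as well), but when $i_{k-1}=i_{k+1}$ the inner projection in $x_{k-1}$ keeps all of $v$, and they differ. Concretely, for $n=3$ with $i_1=i_3>i_2$ your computation yields $\phi_\xi(x_1x_2x_3)=\phi_\xi(x_1)\phi_\xi(x_2)\phi_\xi(x_3)$, whereas the anti-monotone relation demands $\phi_\xi(x_2)\phi_\xi(x_1x_3)=\phi_\xi(x_2)\,\phi_{i_1}(T_1T_3)$; these disagree in general. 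So the step where you pass from ``what is left'' to the anti-monotone factorization is exactly where the argument breaks, and the difficulty is not merely bookkeeping: the combination $P_{\rhd,i}\rho_i(\cdot)P_{\rhd,i}$ forces Boolean-type factorization on alternating words, which is not the same relation as anti-monotone.
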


Notice that,  all the above constructions rely on  truncations of reduced free products and left-right regular representations of certain algebras . The hierarchy of freeness  of Franz and Lenczewski\cite{FL} and the  hierarchy of monotone  of Lenczewski and Sa\l apata\cite{LS} can be obtained by using some other projections in place of  $P_{*,i}$.

\section{ Free-Boolean independence}

In this section,  we introduce the notion of mixed independent of pairs of faces via reduced free product spaces. 
Since Boolean and monotone products are not unital,  we must modify Voiculescu's definition of pairs of faces \cite{Voi1}. In the following definition,  we do not require subalgebras to be unital.

\begin{definition} \normalfont 
A \emph{pair of faces} in a noncommutative probability space $(\A,\phi)$ is an ordered pair $((B,\beta),(C,\gamma))$ where  $B,C$ are (not necessarily unit) algebras and $\beta:B\rightarrow \A$, $\gamma:C\rightarrow \A$ are algebra homomorphisms(not necessarily unit)  even if $B$ or $C$ has unite .  If $B,C$ are subalgebras of $\A$  and $\beta,\gamma$ are the  inclusion maps , then the pair will be simply  denoted by $(B,C)$.
\end{definition}

We can also use the original definition of pairs of faces here,  but then we just need turn to consider the unitalizations of $B,C$. In the following context, we will denote by $\bigstar$ the non-unital universal free product which can be constructed as follows:
Given a family $\{A_i\}_{i\in I}$ of algebras , we denote by $\widetilde{A_i}$ be the unitalization of $A_i$. Let $\underset{i\in I}{*}\widetilde{A_i}$ be the unital universal free product. Then, for each $i$, there is an inclusion $\iota_i:\widetilde{A_i}\rightarrow \underset{i\in I}{*}\widetilde{A_i}$.  The algebra generated by $\{\iota_i(A_i)\}_{i\in I}$ is the non-unital universal free product of  $\{A_i\}_{i\in I}$, we denote the algebra by $\underset{i\in I}{\bigstar}A_i$.

\begin{definition}\normalfont  If $\Gamma=\{(B_i,\beta_i),(C_i,\gamma_i)\}_{i\in I}$ is a family of pairs in $(\A,\phi)$, then its \emph{joint distribution} is the  functional $\mu_\Gamma: \underset{i\in I}{\bigstar} (B_{i}\bigstar C_i)$ defined by $\mu_\Gamma=\phi\circ \alpha$, where $\alpha:\underset{i\in I}{\bigstar} (B_{i}\bigstar C_i)\rightarrow \A$ is the unique algebra homomorphism such that $\alpha|_{B_i}=\beta_i$, $\alpha|_{C_i}=\beta_{i}$. 
\end{definition}

\begin{definition}\normalfont   A two faced family of noncommutative random variables in a noncommutative probability space  is an ordered pairs $a=\{(b_i)_{i\in I},(c_i)_{j\in J}\}$ in  $(\A, \phi) $ i.e. the $b_i$ and $c_j$ are elements of $\A$.  The distribution $\mu_a$ of $a$ is the linear functional 
$$\mu_a:\C\langle X_i, Y_j|i\in I, j\in J\rangle \rightarrow \C$$ 
such that $\mu_a=\phi\circ \alpha $ where $\alpha:\C\langle X_i, Y_j|i\in I, j\in J\rangle \rightarrow \A$ is the unique algebra homomorphism  such that $\alpha(X_i)=b_i$ and $\alpha(Y_j)=c_j$.
\end{definition}

By modifying Voiculescu's definition of bi-freeness, we have the following definition for free-Boolean independent pairs of algebras
\begin{definition}\label{free-Boolean} \normalfont  Let $\Gamma=\{(B_i,\beta_i),(C_i,\gamma_i)\}_{i\in I}$ be a family of  pairs of faces in $(\A,\phi)$, where $I$ is an index set. 
Suppose that  there is a family of vector spaces with specified vectors $ (\mathcal{X}_i, \mrx_i,\xi_i)_{i\in I}$ and  homomorphisms $\ell_i:B_i\rightarrow \mathcal{L}(\X_i)$, $r_i:C_i\rightarrow \LL(\X_i)$.  
Let  $( \X, \mrx,\xi)$ be the reduced free product of $ (\mathcal{X}_i, \mrx_i,\xi_i)_{i\in I}$ and let $\phi_\xi$ be the  associated linear functional on $\LL(X)$.  Suppose that  there are projections $P_i$ and $Q_i$ in $\LL(\X)$ such that the joint distribution of $ \{(B_i,P_i\lambda_i(\ell_i(\cdot))P_i),(C_i,Q_i\rho_i(r_i(\cdot))Q_i)\}_{i\in I}$ in $(\LL(X),\phi_{\xi})$, is equal to the joint distribution of $\{(B_i,\beta_i),(C_i,\gamma_i)\}_{i\in I}$. Then,
\begin{itemize}
\item $\Gamma$ is said to be\emph{ bi-freely independent} if $P_i=Q_i=Id_{\LL(X)}$ for all $i\in I$.
\item $\Gamma$ is said to be \emph{free-Boolean independent} if $P_i=Id_{\LL(X)} $ and $Q_i=P_{\uplus,i}$ for all $i\in I$.
\end{itemize}
\end{definition}

Notice that the independence relations of pairs depend on the choice of $P_i$ and $Q_i$.   If we replace $P_i$, $Q_i$ by other projections on $\X$, we would have
\begin{itemize}
\item $\Gamma$ is \emph{free-monotone}  independent if $P_i=Id_{\LL(X)} $ and $Q_i=P_{\lhd,i}$ for all $i\in I$.
\item $\Gamma$ is \emph{bi-monotone independent} if $P_i=P_{\rhd,i} $ and $Q_i=P_{\lhd,i}$ for all $i\in I$.
\item $\Gamma$ is \emph{monotone-anti-monotone} independent if $P_i=Q_i=P_{\rhd,i}$ for all $i\in I$.
\end{itemize}

One can ,of course, construct many other kinds independence for pairs of algebras by choosing proper projections $P_i$ and $Q_i$.  The reason that we call the family an  A-B independent family is that  the first face satisfies A-independence relation and the second face satisfies B-independence relation. A, B can be free, Boolean monotone, anti-monotone, hierarchy of freeness, etc.  One can also generalize the idea of a pair of faces to an $n$-tuple of faces. The special case of  the bifree-Boolean independence relation will be studied in a forthcoming paper.

\begin{remark}\normalfont  In  the preceding definition,  we can also define Boolean-Boolean independent families of algebras . However,  the Boolean-Boolean families of pairs of algebras in this sense are just Boolean independent sequences since there is no distinction between the left face and the right face. This makes the Boolean-Boolean independence trivial.   Gu and Skoufranis defined a nontrivial notion of  bi-Boolean independent families starting from the idea of two-state freeness. 
\end{remark}

It is  routine to show that the preceding definition does not depend on a particular choice of $l_k$, $r_k$ and $ (\mathcal{X}_i, \mrx,\xi_i)_{i\in I}$.  Given an A-B independent family of pairs of faces $\Gamma=\{(B_i,\beta_i),(C_i,\gamma_i)\}_{i\in I}$, one can see that the joint distribution $\mu_{\Gamma}$ is uniquely determined in Section 2.1 and 2.9 of \cite{Voi1}.  

Now, we turn to study free-Boolean pairs in details. The following is a definition of monotone independence for pairs of algebras.

\begin{definition}\normalfont 
Let $B, C$ be two  subalgebras of a probability space $(A,\phi)$, we say $B$ is \emph{monotone} to $C$ if  $$\phi(x_1\cdots x_{k-1}x_kx_{k+1}\cdots x_n)=\phi(x_k)\phi(x_1\cdots x_{k-1}x_{k+1}\cdots x_n)$$
whenever $x_{k-1},x_{k+1}\in C$ and $x_k\in B$.
\end{definition}

\begin{lemma}\label{FBtoM} \normalfont  Given two vector spaces with specified vectors $(\X_i,\mrx_i,\xi_i)_{i=1,2}$, let $(\X, \mrx,\xi)$ be their reduced free product, and $\lambda_i$ be the left regular representation of $\LL(\X_i)$ on $\X$, $i=1,2$. Then  the algebra $\lambda_1(\LL(\X_1))$ is monotone to $ P_{\uplus,1}\lambda_2(\LL(\X_2))P_{\uplus,1}$ with respect to $\phi_\xi$,  where $\phi_\xi$ is the linear functional associated with $(\mathcal{X}, \mrx,\xi)$.
\end{lemma}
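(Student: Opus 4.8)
The plan is to exploit the very special form that the compression $P_{\uplus,1}\lambda_2(\LL(\X_2))P_{\uplus,1}$ takes when the projection carries the \emph{index one}. First I would record the range of $P_{\uplus,1}$: it is the subspace $\mathcal{Y}:=\C\xi\oplus\mrx_1$, on which $P_{\uplus,1}$ restricts to the identity, while it annihilates $\mrx_2$ together with every tensor of length $\geq 2$. The crucial computation is then $P_{\uplus,1}\lambda_2(T)P_{\uplus,1}=\phi_2(T)\,P_{\uplus,1}$ for all $T\in\LL(\X_2)$. Indeed $\mathcal{Y}\subseteq\X(\ell,2)$, since $\C\xi$ and $\mrx_1$ are precisely the words of length $\leq 1$ not beginning with the index $2$; hence for $v\in\mathcal{Y}$ one has $V_2^{-1}v=\xi_2\otimes v$ and $\lambda_2(T)v=\phi_2(T)\,v+V_2\big((T\xi_2-\phi_2(T)\xi_2)\otimes v\big)$, whose second summand lies in $\mrx_2\oplus(\mrx_2\otimes\mrx_1)$ and is killed by $P_{\uplus,1}$. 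Thus the whole algebra $C:=P_{\uplus,1}\lambda_2(\LL(\X_2))P_{\uplus,1}$ collapses to the scalar multiples $\C\,P_{\uplus,1}$ of a single projection. (It is exactly here that replacing $P_{\uplus,1}$ by $P_{\uplus,2}$ would change everything, since then the compression would be the nondegenerate Boolean representation $\alpha_2$ rather than a scalar.)

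Next I would establish the single invariance fact that drives the argument: $\lambda_1(\LL(\X_1))$ leaves $\mathcal{Y}$ invariant. Unpacking $\lambda_1=V_1(\,\cdot\,\otimes I)V_1^{-1}$, one gets $\lambda_1(S)\xi=\phi_1(S)\xi+(S\xi_1-\phi_1(S)\xi_1)$ with the second term in $\mrx_1$, and, writing $\phi^{(1)}$ for the functional on $\X_1$ with $\phi^{(1)}(\xi_1)=1$ and kernel $\mrx_1$, $\lambda_1(S)\eta=\phi^{(1)}(S\eta)\,\xi+(S\eta-\phi^{(1)}(S\eta)\xi_1)$ for $\eta\in\mrx_1$; both outputs lie in $\C\xi\oplus\mrx_1=\mathcal{Y}$. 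Combining the two paragraphs, $B:=\lambda_1(\LL(\X_1))$ and $C=\C\,P_{\uplus,1}$ both preserve $\mathcal{Y}$, and since $\xi\in\mathcal{Y}$, every tail $x_j\cdots x_n\xi$ of a word in $B\cup C$ stays inside $\mathcal{Y}$. On $\mathcal{Y}$ the elements of $C$ act as honest scalars, $c\,P_{\uplus,1}|_{\mathcal{Y}}=c\cdot\mathrm{id}$, while the elements of $B$ act as $\lambda_1(S)|_{\mathcal{Y}}$.

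From this I would extract a closed form for all moments. After reducing an arbitrary word to an alternating one (merging adjacent $B$-factors by the homomorphism property of $\lambda_1$, adjacent $C$-factors by idempotency of $P_{\uplus,1}$), the fact that on $\mathcal{Y}$ scalars factor out and the $\lambda_1(S_j)$ compose in order gives $x_1\cdots x_n\,\xi=\big(\prod_{j:\,x_j\in C}c_j\big)\,\lambda_1\big(\textstyle\prod_{j:\,x_j\in B}S_j\big)\xi$, hence $\phi_\xi(x_1\cdots x_n)=\big(\prod_{j:\,x_j\in C}c_j\big)\,\phi_1\big(\prod_{j:\,x_j\in B}S_j\big)$. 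The monotone factorization now follows by inspection: when the distinguished factor $x_k=c_k P_{\uplus,1}$, drawn from the compressed algebra $P_{\uplus,1}\lambda_2(\LL(\X_2))P_{\uplus,1}$, is flanked by factors of $\lambda_1(\LL(\X_1))$, deleting $x_k$ removes exactly the scalar $c_k$ and leaves the ordered product of the $S_j$ untouched, while $\phi_\xi(x_k)=c_k$; therefore $\phi_\xi(x_1\cdots x_n)=\phi_\xi(x_k)\,\phi_\xi(x_1\cdots x_{k-1}x_{k+1}\cdots x_n)$, which is precisely the monotone relation recorded by the lemma between $\lambda_1(\LL(\X_1))$ and $P_{\uplus,1}\lambda_2(\LL(\X_2))P_{\uplus,1}$.

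The main obstacle, and the step to treat with most care, is the collapse $P_{\uplus,1}\lambda_2(T)P_{\uplus,1}=\phi_2(T)P_{\uplus,1}$: it rests on pinning down the reduced-word components ($\mrx_2$ and $\mrx_2\otimes\mrx_1$) produced by $\lambda_2(T)$ on $\mathcal{Y}$ and checking that the index-$1$ projection annihilates exactly those. Everything downstream is then bookkeeping of the closed-form moment; the one point worth stating explicitly is which variable is the factored one, namely the variable contributed by the compressed algebra, since the degeneracy $C=\C\,P_{\uplus,1}$ makes the monotone relation take this particularly transparent form.
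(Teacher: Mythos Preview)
Your computation that $P_{\uplus,1}\lambda_2(T)P_{\uplus,1}=\phi_2(T)\,P_{\uplus,1}$ is correct, and so is your closed-form moment formula
\[
\phi_\xi(x_1\cdots x_n)=\Big(\prod_{j:\,x_j\in C}c_j\Big)\,\phi_1\Big(\prod_{j:\,x_j\in B}S_j\Big).
\]
The gap is in the final step: you verify the factorization for $x_k\in C$ flanked by elements of $B$, but the lemma asserts that $B=\lambda_1(\LL(\X_1))$ is monotone to $C$, and by the paper's Definition~3.7 this means factoring out $x_k\in B$ when $x_{k-1},x_{k+1}\in C$. Your own formula shows this fails: with $x_1,x_3,x_5\in B$ and $x_2=x_4=P_{\uplus,1}\in C$, the relation at $k=3$ would force $\phi_1(S_1S_3S_5)=\phi_1(S_3)\,\phi_1(S_1S_5)$ for arbitrary $S_i\in\LL(\X_1)$, which is false in general. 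So you have proved ``$C$ monotone to $B$'', not the stated direction.

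This actually exposes a typo in the lemma: the projection should be $P_{\uplus,2}$, not $P_{\uplus,1}$. The paper's own proof uses that the range of the compressed algebra lies in $\C\xi\oplus\mrx_2$ (the range of $P_{\uplus,2}$), and the subsequent Proposition---the only place the lemma is invoked---applies it with the projection $P_{\uplus,''}$ attached to the \emph{second} factor $\X''$. With $P_{\uplus,2}$ in place of $P_{\uplus,1}$ your collapse to scalars disappears (one gets the genuine Boolean representation $\alpha_2$, as you yourself note parenthetically), so the scalar bookkeeping does not transfer. The paper's approach is different: it shows that $\bar\X=\C\xi\oplus\mrx_1\oplus\mrx_2\oplus(\mrx_1\otimes\mrx_2)$ is invariant under both algebras and contains $\xi$, identifies $\bar\X$ with the monotone product space for the order $1>2$, and then invokes Proposition~\ref{construction of monotone } to obtain the monotone relation in the required direction.
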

\begin{proof}
Note that,  for $T\in \LL(\X_1)$,   $\lambda_1(T)= V_1(T\otimes I_{\X(\ell,1)})V_1^{-1}$, where $V_1$ is the isomorphism defined in Section 2.  Let 
$$\bar{\X}=\C\xi\oplus \mrx_1\oplus \mrx_2\oplus \mrx_1\otimes\mrx_2. $$
Then $V_1^{-1}\bar{\X}=\X_1\otimes (\C\xi\oplus \mrx_2)$ and  $T\otimes I_{X(\ell,1)}V_1^{-1}\bar{\X}\subseteq\X_1\otimes (\C\xi\oplus \mrx_2)$. \\
Therefore, 
$$\lambda_1(T)\bar{\X}=V_1T\otimes I_{X(\ell,1)}V_1^{-1}\bar{\X}\subseteq\bar{\X}$$
which shows that $\bar{\X}$ is an invariant subspace of $\lambda_1(T)$.  Since $T$ is arbitrary,  $\bar{\X}$ is an invariant subspace of $\lambda_1(\LL(\X_1))$. 
On the other hand,  the range of $ P_{\uplus,1}\lambda_2(\LL(\X_2))P_{\uplus,1}$ is contained in $\C\xi\oplus \mrx_2\subseteq\bar{\X}$. Let $P_{\bar{\X}}$ be the projection onto the subspace $\bar{\X}$.
 Now,  to compute mixed moments of random variables from $\lambda_1(\LL(\X_1))$ and $ P_{\uplus,1}\lambda_2(\LL(\X_2))P_{\uplus,1}$,  
 we just need to restrict everything on $\bar{\X}=P_{\bar{\X}}\X$.  
 By Proposition \ref{construction of monotone },  $\lambda_1(\LL(\X_1))$ is monotone to  $ P_{\uplus,1}\lambda_2(\LL(\X_2))P_{\uplus,1}$ with respect to $\phi_\xi$.
\end{proof}

The following proposition studies the relation between the free face of one pair and the Boolean face of another.
\begin{proposition}\normalfont 
 Let $\{(B_i,\beta_i),(C_i,\gamma_i)\}_{i\in I}$ be a family of   free-Boolean independent pairs  in $(\A,\phi)$ and  $L\subset I$. Then the subalgebra $\underset{i\in L}{\vee} B_i$ is monotone to  $\underset{i\in I\setminus L}{\vee} C_i$ in $(\A,\phi)$.
\end{proposition}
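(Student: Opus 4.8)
The plan is to reduce the general statement to the two-algebra computation already isolated in Lemma \ref{FBtoM}, using the fact that the free-Boolean joint distribution is realized concretely on a reduced free product space. Fix the reduced free product $(\X,\mrx,\xi)=\underset{i\in I}{*}(\X_i,\mrx_i,\xi_i)$ and the homomorphisms $\ell_i\colon B_i\to\LL(\X_i)$, $r_i\colon C_i\to\LL(\X_i)$ from Definition \ref{free-Boolean}, so that $\beta_i$ is modeled by $\lambda_i(\ell_i(\cdot))$ and $\gamma_i$ by $P_{\uplus,i}\rho_i(r_i(\cdot))P_{\uplus,i}$, and the joint distribution agrees with $\phi\circ\alpha$. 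Since monotonicity of one subalgebra to another is a statement purely about mixed moments, it suffices to verify the identity
\[
\phi_\xi(x_1\cdots x_{k-1}x_k x_{k+1}\cdots x_n)=\phi_\xi(x_k)\,\phi_\xi(x_1\cdots x_{k-1}x_{k+1}\cdots x_n)
\]
whenever $x_k\in\underset{i\in L}{\vee}\lambda_i(\ell_i(B_i))$ and $x_{k-1},x_{k+1}\in\underset{j\in I\setminus L}{\vee}P_{\uplus,j}\rho_j(r_j(C_j))P_{\uplus,j}$. So I would first argue it is enough to treat a single operator $x_k$ of the form $\lambda_{i_1}(T_1)\cdots\lambda_{i_m}(T_m)$ with all $i_s\in L$, and operators on either side built from the Boolean-compressed right representations with indices in $I\setminus L$; by multilinearity and the algebra structure this captures $\underset{i\in L}{\vee}B_i$ and $\underset{j\in I\setminus L}{\vee}C_j$.

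The core observation, exactly as in Lemma \ref{FBtoM}, is an invariant-subspace argument. Let $\mrx_L=\bigoplus_{n\ge1}\big(\bigoplus_{i_1\ne\cdots\ne i_n,\ i_1\in L}\mathring{\X_{i_1}}\otimes\cdots\otimes\mrx_{i_n}\big)$ be the span of reduced words whose \emph{first} letter has index in $L$, and set $\bar\X=\C\xi\oplus\mrx_L\oplus\big(\text{span of reduced words with all letters indexed in }I\setminus L\big)$. One checks, using the isomorphism $V_i$ of Section 2, that every $\lambda_i(\ell_i(B_i))$ with $i\in L$ leaves $\bar\X$ invariant: acting on $\xi$ or on an $(I\setminus L)$-word produces a word whose first letter is indexed in $L$ (hence lands in $\mrx_L$), and acting on a word already starting in $L$ keeps that property, since $\lambda_i(T)$ only modifies the leftmost tensor leg. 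Meanwhile the Boolean compressions $P_{\uplus,j}\rho_j(r_j(C_j))P_{\uplus,j}$ for $j\in I\setminus L$ have range inside $\C\xi\oplus\mrx_j\subseteq\bar\X$ and annihilate everything outside $\C\xi\oplus\mrx_j$; in particular they kill $\mrx_L$. Thus on $\bar\X$ the right-hand Boolean family behaves exactly like the Boolean compressions $P_{\uplus,j}\rho_j(\LL(\X_j))P_{\uplus,j}$ of a two-block decomposition, and the left-hand family like $\lambda$ of the ``merged'' space $\underset{i\in L}{*}(\X_i,\mrx_i,\xi_i)$.

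Having localized everything on $\bar\X=P_{\bar\X}\X$, I would invoke Lemma \ref{FBtoM} with $\X_1$ taken to be the reduced free product $\underset{i\in L}{*}(\X_i,\mrx_i,\xi_i)$ and $\X_2$ the reduced free product $\underset{j\in I\setminus L}{*}(\X_j,\mrx_j,\xi_j)$: the lemma gives that $\lambda_1(\LL(\X_1))$ is monotone to $P_{\uplus,1}\lambda_2(\LL(\X_2))P_{\uplus,1}$, and $\underset{i\in L}{\vee}B_i$ embeds in the first while $\underset{j\in I\setminus L}{\vee}C_j$ embeds in the Boolean compression of the second (here one uses that the right-regular Boolean compression and the left-regular one give the same algebra, as recorded just before the Corollary in Section 2). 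Pulling the moment identity back through $\alpha$ and $\phi$ then yields the claim. The main obstacle I anticipate is the bookkeeping in the first step: verifying carefully that $\bar\X$ really is invariant under all of $\underset{i\in L}{\vee}\lambda_i(\ell_i(B_i))$ and that the two reduced free products glue compatibly inside $\X$, i.e. that the natural map identifying $\underset{i\in L}{*}(\X_i,\mrx_i,\xi_i)\;*\;\underset{j\in I\setminus L}{*}(\X_j,\mrx_j,\xi_j)$ with $\X$ intertwines the representations as required. Once that identification is set up, the monotonicity is immediate from Lemma \ref{FBtoM}.
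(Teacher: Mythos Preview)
Your proposal is correct and follows essentially the same route as the paper: reduce to the concrete model on the reduced free product, use associativity to regroup $(\X,\xi)=\big(\underset{i\in L}{*}(\X_i,\xi_i)\big)*\big(\underset{j\in I\setminus L}{*}(\X_j,\xi_j)\big)$, embed the two generated algebras into $\lambda'(\LL(\X'))$ and $P_{\uplus,''}\lambda''(\LL(\X''))P_{\uplus,''}$ respectively, and then invoke Lemma~\ref{FBtoM}. The only difference is cosmetic: you spell out an invariant-subspace argument on a hand-built $\bar\X$ before invoking the lemma, whereas the paper skips directly to the associativity identification and the containments $\lambda_i=\lambda'\circ\lambda'_i$ and $P_{\uplus,''}P_{\uplus,j}=P_{\uplus,j}$, letting Lemma~\ref{FBtoM} do all the invariant-subspace work; your explicit $\bar\X$ step is thus redundant but harmless.
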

\begin{proof}
The statement is equivalent to showing that the algebra generated by $ \{ \lambda_i(\LL(X_i))\}_{i\in L}$ is monotone to the $ \{ P_{\uplus,i}\lambda_i(\LL(X_i))P_{\uplus,i}\}_{i\in I\setminus L}$ for arbitrary vector spaces with specified vectors $(\X_i,\mrx_i,\xi_i)_{i\in I}$ . Since the reduce free product is associative,  we have that
$$(\X, \xi)=\underset{i\in I}{*}(\X_i, \xi_i)=\Big(\underset{i\in L}{*}(\X_i,\xi_i)\Big)*\Big(\underset{i\in I\setminus L}{*}(\X_i, \xi_i)\Big).$$
Let $(\X',\xi')=\underset{i\in L}{*}(\X_i, \xi_i)$ and  $(\X'',\xi'')=\underset{i\in I\setminus L}{*}(\X_i, \xi_i)$.  For each $i$, let  $\lambda'_i: \LL(\X_i)\rightarrow \LL(\X')$ be the left regular representation of $ \LL(X_i)$ on  $ \X'$.  Let $\lambda': \LL(\X')\rightarrow \LL(\X)$ be the left regular representation of $ \LL(\X')$ on  $ \X$. Then,  we have $\lambda_i=\lambda'\circ\lambda'_i $. Therefore, $ \{ \lambda_i(\LL(X_i))\}_{i\in L}=\{ \lambda'\circ\lambda'_i (\LL(X_i))\}_{i\in L}\subset \lambda'(\LL(\X'))$. 
On the other hand,  let $P_{\uplus,''}$ be the projection onto the linear space $\C\xi\oplus(\bigoplus\limits_{i\in I\setminus L}\mrx_i)$ and vanishes on all the other tensor components.  Therefore, $P_{\uplus,''} P_{\uplus,i}= P_{\uplus,i}$ for all $i \in I\setminus L.$  Then, $\{ P_{\uplus,i}\lambda_i(\LL(X_i))P_{\uplus,i}\}_{i\in I\setminus L}\subset P_{\uplus,''} \lambda''(\LL(\X''))P_{\uplus,''}$ where $\lambda''$ is the left regular representation of $\LL(\X'')$ on $\X$.  
By Lemma \ref{FBtoM},   $\lambda'(\LL(\X'))$ is monotone to $P_{\uplus,''} \lambda''(\LL(\X''))P_{\uplus,''}$ with respect to $\phi_{\xi}$. The proposition follows.
\end{proof}

We next define the convolution associated with free-Boolean independence.

\begin{definition} \normalfont  Let $a=\{(b_i)_{i\in I},(c_j)_{j\in J}\}$ and $a'=\{(b'_i)_{i\in I},(c'_j)_{j\in J}\}$ be two pairs of two faced family of noncommutative random variables in a noncommutative  space  $(\A, \phi) $. 
We say that $a$ and $a'$ are \emph{free-Boolean independent} if $(B,C)$ and $(B',C')$ are free-Boolean independent, where $B,B'$ are the unital algebras generated by $(b_i)_{i\in I}$ and $(b'_i)_{i\in I}$ respectively  and $C,C'$ are the non-unital algebras generated by $(c_j)_{j\in J}$ and $(c'_j)_{j\in J}$ respectively. 
If $\{(b_i)_{i\in I},(c_j)_{j\in J}\}$ and $\{(b'_i)_{i\in I},(c'_j)_{j\in J}\}$ are free-Boolean independent, then the joint distribution of $\{(b_i+b'_i)_{i\in I},(c_j+c'_j)_{j\in J}\}$ is determined. This defines additive free-Boolean convolution $\boxplus\uplus$ on distributions of two-faced families of noncommutative random variables with pairs of index set $(I,J)$
$$\mu_{\{(b_i+b'_i)_{i\in I},(c_j+c'_j)_{j\in J}\}}=\mu_{\{(b_i)_{i\in I},(c_j)_{j\in J}\}}\boxplus\uplus \mu_{\{(b'_i)_{i\in I},(c'_j)_{j\in J}\} }.$$
The same we can define   multiplicative, additive-multiplicative, multiplicative-additive free-Boolean convolution. 
\end{definition}

\begin{remark} \normalfont 
In general, the sum of left faces of free-Boolean pairs is not commuting with the sum of right faces of free-Boolean pairs. Hence, free-Boolean convolution does not lead to a convolution of measures on  plane. 
\end{remark}

\section{Moment-conditions for free-Boolean independence}
In this section, we introduce a moments-condition definition for free-Boolean independence.

Let  $ (\mathcal{X}_i, \mrx_i,\xi_i)_{i\in I}$ be a family of  vector spaces with specified vectors and $( \mathcal{X}, \mrx,\xi)$ be  their  reduced free product.  
For each $i\in I$, let $\A_{i,\ell}=\lambda_i(\LL(\X_i))$ and $\A_{i,r}=P_{\uplus,i}\lambda(\LL(\X_i))P_{\uplus,i}$, where  $\lambda_i$ is the left regular representation and  $P_{\uplus,i}$ is the projection from $\X$ to $\C\xi\oplus\mrx_i$. According to Definition \ref{free-Boolean}, the family $\{\mathcal{A}_{i,\ell}, \mathcal{A}_{i,r}\}_{i \in I}$ is free-Boolean independent with respect to $\phi_\xi$, where $\phi_\xi$ is the linear functional associated with $\xi$ on $\LL(\X)$.

Let $ \A_i$ be the algebra generated by $\mathcal{A}_{i,\ell}$ and $\mathcal{A}_{i,r}$. 
Let $z_k\in \bigcup\limits_{i\in I } (A_{i,\ell}\cup A_{i,r})$ for $k=1,\cdots,n$.  Then the product  $z_1\cdots z_n$ can be  rewritten as 
$ Z_1\cdots Z_m$
for some $m$,
where $Z_k\in  \A_{i_k}$ and $i_{k-1}\neq i_k$.

For example, let $z_1\in \A_{1,\ell}$, $z_2\in \A_{1,r}$, $z_3\in \A_{1,\ell}$, $z_4\in \A_{2,\ell}$, and $z_5\in \A_{1,\ell}$.  Then 
$$z_1z_2z_3z_4z_5=(z_1z_2z_3)(z_4)(z_5)=Z_1Z_2Z_3,$$
where $Z_1=z_1z_2z_3\in\A_1$,  $Z_2=z_4\in\A_2$ and $Z_3=z_5 \in\A_1$.

\begin{definition}\normalfont 
Given  $z_1,\cdots, z_t\in \A_{i,\ell}\cup\A_{i,r} $ for some $i\in I$, their product $z_1\cdots z_t$ is called a \emph{Boolean product} if $z_k\in \A_{i,r}$ for some $k$.
\end{definition}

\begin{lemma}\normalfont  For each $i=1,2$, let $Z_i=\prod\limits_{k=1}^{s_i}z^{(i)}_{k,\beta_i(k)}$ where $\beta_i:[s_i]\rightarrow \{\ell, r\}$, $z^{(i)}_{k,\beta_i(k)}\in \A_{i,\beta_i(k)}$. If $Z_1$ is a Boolean product, then
$$ Z_1Z_2\xi=\phi_{\xi}(Z_2)Z_1\xi.$$
\end{lemma}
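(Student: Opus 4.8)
The plan is to reduce the statement to a structural fact about how a Boolean product of operators from $\A_{1,\ell}\cup\A_{1,r}$ acts on $\X$, exploiting the hypothesis that at least one factor $z^{(1)}_{k,r}$ lies in $\A_{1,r}=P_{\uplus,1}\lambda_1(\LL(\X_1))P_{\uplus,1}$. The key observation is that $P_{\uplus,1}$ has range $\C\xi\oplus\mrx_1$, so any vector that the operator $Z_1$ ``sees'' after the first $r$-factor has been applied must be pushed into $\C\xi\oplus\mrx_1$. First I would record where the vector $Z_2\xi$ lives: since $Z_2\in\A_2$ is a product of operators from $\A_{2,\ell}$ and $\A_{2,r}$, each of which preserves the subspace $\C\xi\oplus\big(\bigoplus_{n\ge1,\, i_1=2}\mrx_{i_1}\otimes\cdots\otimes\mrx_{i_n}\big)$ (the left regular representation $\lambda_2$ maps into $\X(\ell,2)^{\perp}$-type summands, and $P_{\uplus,2}$ only shrinks this), we get $Z_2\xi\in\C\xi\oplus\big(\bigoplus_{i_1=2}\mrx_2\otimes\cdots\big)$; write $Z_2\xi=\phi_\xi(Z_2)\xi+\eta$ with $\eta$ in the complement, $\eta\in\bigoplus_{i_1=2}(\mrx_2\otimes\cdots)$.

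Next I would analyze $Z_1\eta$. Decompose $Z_1=W_1\,(z^{(1)}_{k,r})\,W_2$ where $k$ is the \emph{last} index with $\beta_1(k)=r$, so $W_2=\prod_{j>k}z^{(1)}_{j,\ell}$ is a product of operators in $\A_{1,\ell}=\lambda_1(\LL(\X_1))$. Using the isomorphism $V_1:\X_1\otimes\X(\ell,1)\to\X$ of Section 2, each $\lambda_1(T)=V_1(T\otimes I_{\X(\ell,1)})V_1^{-1}$ acts only on the first tensor leg; since $\eta\in\bigoplus_{i_1=2}(\cdots)$ corresponds under $V_1^{-1}$ to $\xi_1\otimes\eta$ with $\eta\in\X(\ell,1)$, we get $W_2\eta=V_1\big((T_{W_2}\xi_1)\otimes\eta\big)$ for the appropriate $T_{W_2}\in\LL(\X_1)$. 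Writing $T_{W_2}\xi_1=\phi_{\xi_1}(T_{W_2})\xi_1+\zeta$ with $\zeta\in\mrx_1$, the $\zeta\otimes\eta$ part lands in $\mrx_1\otimes\mrx_2\otimes\cdots$, a summand with $n\ge2$, hence is killed by the next application of $P_{\uplus,1}$ inside $z^{(1)}_{k,r}$. So after $z^{(1)}_{k,r}=P_{\uplus,1}\lambda_1(S)P_{\uplus,1}$ acts, only the term proportional to $\phi_{\xi_1}(T_{W_2})\,\eta$ could survive—but $P_{\uplus,1}\eta=0$ since $\eta\in\bigoplus_{i_1=2}\mrx_2\otimes\cdots$ and in the $n\ge2$ part of that sum it is killed, while in the $n=1$ part $\mrx_2$ it is also killed by $P_{\uplus,1}$ (whose range is $\C\xi\oplus\mrx_1$). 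Either way $z^{(1)}_{k,r}(W_2\eta)=0$, hence $Z_1\eta=0$.

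Combining, $Z_1Z_2\xi=Z_1\big(\phi_\xi(Z_2)\xi+\eta\big)=\phi_\xi(Z_2)Z_1\xi+Z_1\eta=\phi_\xi(Z_2)Z_1\xi$, which is the claim. The routine bookkeeping I am suppressing is the precise identification, via iterated use of $V_1^{-1}$, of which tensor summand each intermediate vector occupies; this is exactly the kind of computation already carried out in the proof of Proposition \ref{construction of monotone }, so I would cite that style rather than repeat it.

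The main obstacle is making the case analysis on $\eta$ airtight: one must be careful that $\eta$ may have components in $\mrx_2$ alone ($n=1$, which is in $\X_\uplus$) as well as in longer tensors $\mrx_2\otimes\mrx_{i_2}\otimes\cdots$, and verify that $P_{\uplus,1}$ annihilates \emph{all} of these—the $n=1$ piece because $\mrx_2\perp(\C\xi\oplus\mrx_1)$, the $n\ge2$ pieces because $P_{\uplus,1}$ already vanishes on the whole $n\ge2$ part of $\X$. A secondary subtlety is the interaction when $W_2$ is empty (i.e. the Boolean factor is the last one in $Z_1$), in which case $z^{(1)}_{k,r}$ acts directly on $\eta$ and the same annihilation argument applies more directly. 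Once these are checked the conclusion is immediate.
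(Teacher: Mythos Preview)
Your proof follows essentially the same route as the paper's: decompose $Z_2\xi=\phi_\xi(Z_2)\xi+\eta$, take the last $r$-factor in $Z_1$, observe that the tail $W_2$ lies in $\A_{1,\ell}=\lambda_1(\LL(\X_1))$, compute $W_2\eta$ via $V_1$, and check the result is killed by $P_{\uplus,1}$.

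One correction and one simplification. Your claimed invariance is false: $\lambda_2(\LL(\X_2))$ does \emph{not} preserve $\C\xi\oplus\bigoplus_{i_1=2}(\mrx_2\otimes\cdots)$, since applying $\lambda_2(T)$ to $v\otimes w$ with $v\in\mrx_2$, $w\in\mrx_j$ ($j\neq2$) produces a term $\phi_{\xi_2}(Tv)\,w$ whose leading index is $j$. What is true, and what the paper uses, is the sharper fact that $\C\xi\oplus\mrx_2$ itself is invariant under both $\A_{2,\ell}$ and $\A_{2,r}$; hence $\eta\in\mrx_2$ on the nose. This is harmless for your argument (your conclusion about $\eta$ is still correct, just for the wrong reason), but once you know $\eta\in\mrx_2$ the $n\ge2$ case analysis disappears and you are left with exactly the paper's computation: $\lambda_1(T)\eta\in\mrx_2\oplus(\mrx_1\otimes\mrx_2)\subset\ker P_{\uplus,1}$.
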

\begin{proof}
By direct computation,  $\mrx_{2}\oplus \C\xi$ is an invariant subspace of the $\A_{2,\ell}$ and $\A_{2,r}$. 
Therefore, $\mrx_{2}\oplus \C\xi$ is an invariant subspace of  $\A_{2}$. Then 
$Z_2\xi=\phi_{\xi}(Z_2)\xi+v$, where $v\in \mrx_{2}$. Since $Z_1$ is Boolean,  $\beta_1(k)=r $ for some $k$. Let $l$ be the largest number that $\beta_1(l)=r$. Then, the product $z^{(1)}_{l+1,\beta_{1}(l+1)}\cdots z^{(1)}_{s_1,\beta_1(n)}\in \A_{1,\ell}$.  According to the  definition of  $\A_{1,\ell}$, there exist an element $T\in \LL(\X_{1})$ such that $z^{(1)}_{l+1,\beta_{1}(l+1)}\cdots z^{(1)}_{s_1,\beta_1(n)}=\lambda_{1}(T)$.   Notice that 
$$
\begin{array}{rcl}
\lambda_{1}(T)v&=&V_1 T\otimes I_{\X_{1,\ell}}V_1^{-1}v\\
&=&V_1 (T\otimes I_{\X_{1,\ell})}(\xi_1\otimes v)\\
&=& V_1(T\xi_1\otimes v)\\
&=&\phi_{\xi_1}(T)v+(T-\phi_{\xi_1}(T)\xi_1)\otimes v,\\
\end{array}
$$  
which is a an element in $\mrx_2\oplus\mrx_1\otimes \mrx_2$.  However,  $\mrx_2\oplus\mrx_1\otimes \mrx_2$ is contained in the kernel of  $P_{\uplus, 1}.$  Therefore, $$Z_1v=[z^{(1)}_{1,\beta_{1}(1)}\cdots z^{(1)}_{l-1,\beta_1(l-1)}]z^{(1)}_{l,\beta_{1}(l)}[z^{(1)}_{l+1,\beta_{1}(l+1)}\cdots z^{(1)}_{s_1,\beta_1(n)}v]=0.$$
Thus  we have
$$Z_1Z_2\xi=Z_1(\phi_{\xi}(Z_2)\xi+v)=\phi_{\xi}(Z_2)Z_1\xi.$$
\end{proof}

The following proposition  follows immediately from the preceding lemma.
\begin{proposition}\label{Boolean Moments} \normalfont For each $i=1,\cdots,m$, let $Z_i=\prod\limits_{k=1}^{l_i}z^{(i)}_{k,\beta_i(k)}$ where $\beta_i:[l_i]\rightarrow \{\ell, r\}$, $z^{(i)}_{k,\beta_i(k)}\in \A_{j_i,\beta(k)}$. If all $Z_i$ are Boolean products and $j_i\neq j_{i+1}$ for all $k$, then
$$\phi_\xi(Z_1\cdots Z_m)=\phi_\xi(Z_1)\cdots \phi_\xi(Z_m).$$
\end{proposition}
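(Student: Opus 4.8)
The plan is to prove Proposition \ref{Boolean Moments} by induction on $m$, using the preceding lemma at each step to peel off the rightmost block. The base case $m=1$ is trivial. For the inductive step, I would first observe that since each $Z_i$ is a Boolean product by hypothesis, and consecutive blocks lie in algebras with distinct indices $j_i \neq j_{i+1}$, the pair of blocks $Z_{m-1}, Z_m$ satisfies the hypotheses of the preceding lemma after we account for the fact that the lemma as stated is for a two-algebra situation ($i=1,2$). So the key preliminary remark is that the lemma applies verbatim with $\A_1$ replaced by $\A_{j_{m-1}}$ and $\A_2$ by $\A_{j_m}$, because the construction of $\A_{i,\ell}, \A_{i,r}$ and the relevant invariant-subspace arguments depend only on the pair of indices involved, and the reduced free product is associative (so we may group all factors with index $j_{m-1}$ and all factors with index $j_m$ into a reduced free product of two spaces, absorbing the rest into the "remainder" as in the proof of the earlier proposition relating free and Boolean faces).

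Granting that, the induction runs as follows. Apply the lemma to the product $(Z_{m-1})(Z_m)$ — here $Z_{m-1}$ plays the role of the Boolean product $Z_1$ in the lemma and $Z_m$ plays the role of $Z_2$ — to get
$$Z_{m-1} Z_m \xi = \phi_\xi(Z_m) \, Z_{m-1}\xi.$$
Therefore
$$\phi_\xi(Z_1 \cdots Z_m) = \phi\big(Z_1 \cdots Z_{m-2} (Z_{m-1}Z_m\xi)\big) = \phi_\xi(Z_m)\, \phi\big(Z_1\cdots Z_{m-2} Z_{m-1}\xi\big) = \phi_\xi(Z_m)\, \phi_\xi(Z_1 \cdots Z_{m-1}).$$
Now $Z_1, \dots, Z_{m-1}$ is still a sequence of Boolean products with alternating indices $j_1 \neq \cdots \neq j_{m-1}$, so the inductive hypothesis gives $\phi_\xi(Z_1 \cdots Z_{m-1}) = \phi_\xi(Z_1)\cdots \phi_\xi(Z_{m-1})$, and combining the two displays finishes the step.

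I expect the main obstacle to be the bookkeeping in the preliminary remark: verifying carefully that the preceding lemma really does apply to the pair $(Z_{m-1}, Z_m)$ inside the full reduced free product $\underset{i\in I}{*}(\X_i,\mrx_i,\xi_i)$, rather than only in the two-factor product. This requires the associativity of the reduced free product together with the compatibility of the projections $P_{\uplus,i}$ with the coarser projection onto $\C\xi \oplus \mrx_{j_m}$ — exactly the kind of argument used in the proof of the proposition that $\underset{i\in L}{\vee} B_i$ is monotone to $\underset{i\in I\setminus L}{\vee} C_i$. Concretely, one groups $(\X,\xi) = (\X', \xi') * (\X'', \xi'')$ where $\X''$ is the reduced free product of the factors with index $j_m$ (here just $\X_{j_m}$) and $\X'$ absorbs everything else; then $Z_m$ lives in $P_{\uplus,''}\lambda''(\LL(\X''))P_{\uplus,''}$ while $Z_{m-1}$, being a Boolean product in $\A_{j_{m-1}}$, leaves the relevant finite-dimensional invariant subspace $\bar\X$ stable, and the lemma's computation goes through unchanged. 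Once this is set up, the rest is the routine induction above; I would state the preliminary remark explicitly and then give the induction in a few lines.
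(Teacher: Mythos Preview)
Your proposal is correct and is essentially the same approach as the paper's, which simply states that the proposition ``follows immediately from the preceding lemma'' --- i.e., the obvious induction peeling off the last factor using $Z_{m-1}Z_m\xi = \phi_\xi(Z_m)Z_{m-1}\xi$. Your bookkeeping concern is overblown, however: the proof of the preceding lemma works verbatim for any two distinct indices $j_{m-1}\neq j_m$ in the full reduced free product (the only facts used are that $\C\xi\oplus\mrx_{j_m}$ is invariant under $\A_{j_m}$ and that $\mrx_{j_m}\oplus\mrx_{j_{m-1}}\otimes\mrx_{j_m}$ lies in the kernel of $P_{\uplus,j_{m-1}}$), so no associativity or regrouping argument is needed.
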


\begin{proposition}\label{Free-boolean moments} \normalfont For each $i=1,\cdots,m$, let $Z_i=\prod\limits_{k=1}^{s_i}z^{(i)}_{k,\beta_i(k)}$ where $\beta_i:[s_i]\rightarrow \{\ell, r\}$, $z^{(i)}_{k,\beta_i(k)}\in \A_{j_i,\beta_i(k)}$. If  the following conditions hold:
\begin{itemize}
\item There exist $1\leq l_1< l_2\leq m$ such that $Z_{l_1}, Z_{l_1+1},\cdots, Z_{l_2}$ are not Boolean products.
\item $Z_{l_1-1}$ is a Boolean product or $l_1=1$,
\item $ Z_{l_2+1}$ is a Boolean product or $l_2=m$,
\item  $j_i\neq j_{i+1}$ for all $i$,
\item $\phi_\xi(Z_{l_1})=\phi_\xi(Z_{l_1+1})=\cdots=\phi_\xi(Z_{l_2})=0,$
\end{itemize}
then
$$\phi_\xi(Z_1\cdots Z_m)=0.$$
\end{proposition}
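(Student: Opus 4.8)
The plan is to reduce the statement to the action of the operators on the specified vector $\xi$ and then track how the ``non-Boolean block'' $Z_{l_1}\cdots Z_{l_2}$ produces, after applying it to $\xi$, a vector whose only components live in genuine reduced-free-product tensors of length $\geq 2$, which is then killed by the first Boolean factor sitting to its left (or by $\phi_\xi$ itself when $l_1=1$). Concretely, I would first recall that $\phi_\xi(Z_1\cdots Z_m)=\phi(Z_1\cdots Z_m\xi)$, and apply the operators from the right. The operators $Z_i$ with $i>l_2$ each begin (reading right-to-left in the word) or consist of $\A_{j_i,\ell}$ and $\A_{j_i,r}$ factors with $j_{i+1}\neq j_i$; by the structure of the reduced free product, $Z_{l_2+1}\cdots Z_m\xi$ is a vector supported on $\C\xi\oplus\mrx$, say $\eta$.

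The heart of the argument is to show that $Z_{l_1}\cdots Z_{l_2}\eta$ lies in $\bigoplus_{n\geq 2}\big(\bigoplus_{i_1\neq\cdots\neq i_n}\mathring{\mathcal{X}_{i_1}}\otimes\cdots\otimes\mathring{\mathcal{X}_{i_n}}\big)$, i.e. it has no $\C\xi$-component and no length-one component $\mrx_i$. For this I would use that each $Z_{l_1},\ldots,Z_{l_2}$ is \emph{not} a Boolean product, so each of them equals $\lambda_{j_i}(T_i)$ for a single $T_i\in\LL(\X_{j_i})$ (the defining property of $\A_{j_i,\ell}$, since a word in $\A_{j_i,\ell}\cup\A_{j_i,r}$ with no $\A_{j_i,r}$-letter is just a product of left representations, hence again a left representation). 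Now a standard computation with the isomorphism $V_{j_i}:\X_{j_i}\otimes\X(\ell,j_i)\to\X$: writing $T_i\xi_{j_i}=\phi_{j_i}(T_i)\xi_{j_i}+\mathring{T_i\xi_{j_i}}$ with $\mathring{T_i\xi_{j_i}}\in\mrx_{j_i}$, the hypothesis $\phi_\xi(Z_i)=\phi_{j_i}(T_i)=0$ forces $\lambda_{j_i}(T_i)$ to map $\C\xi$ into $\mrx_{j_i}$ and, more generally, to \emph{increase} the tensor length of alternating words that do not start with index $j_i$, while on words starting with $j_i$ it either stays or shortens but — crucially — with the vanishing-trace hypothesis the ``stay/shorten'' term picks up the factor $\phi_{j_i}(T_i)=0$ except when it strictly increases the length. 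Iterating down from $i=l_2$ to $i=l_1$, using $j_i\neq j_{i+1}$ at each step so the incoming word never starts with the current index, one shows by induction that $Z_{l_1}\cdots Z_{l_2}\eta$ is a sum of tensors of length $\geq 2$ (with leading index $j_{l_1}$).

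Once that is established, I finish by cases. If $l_1=1$ then $\phi_\xi(Z_1\cdots Z_m)=\phi$ applied to a vector in $\bigoplus_{n\geq 2}(\cdots)\subseteq\mrx$, hence $0$. If $l_1>1$, then $Z_{l_1-1}$ is a Boolean product, so — just as in the proof of the preceding Lemma — its range, after being hit by the tail word, is controlled: $Z_{l_1-1}$ contains an $\A_{j_{l_1-1},r}$-letter, and writing $Z_{l_1-1}=[\cdots]z^{(l_1-1)}_{l,r}[\cdots]$ with the bracket on the right a pure left representation $\lambda_{j_{l_1-1}}(T)$, one checks $\lambda_{j_{l_1-1}}(T)$ sends any tensor of length $\geq 2$ with leading index $j_{l_1}\neq j_{l_1-1}$ into $\mrx_{j_{l_1-1}}\otimes(\text{length}\geq 2)\oplus(\text{length}\geq 2$ not starting with $j_{l_1-1})$, all of which lies in $\ker P_{\uplus,j_{l_1-1}}$, hence is annihilated by the $z^{(l_1-1)}_{l,r}=P_{\uplus,j_{l_1-1}}\lambda_{j_{l_1-1}}(\cdot)P_{\uplus,j_{l_1-1}}$ factor. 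Therefore $Z_{l_1-1}\cdot\big(Z_{l_1}\cdots Z_m\xi\big)=0$, and a fortiori $\phi_\xi(Z_1\cdots Z_m)=0$.

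The main obstacle is the inductive bookkeeping in the middle paragraph: one must carefully verify that the centering hypotheses $\phi_\xi(Z_i)=0$ together with the alternation $j_i\neq j_{i+1}$ genuinely forces \emph{strict} growth of tensor length at every application, with no surviving lower-length remainder — this is exactly the mechanism behind free independence (Voiculescu's argument for $\{\lambda_i(\LL(\X_i))\}$), so I expect to be able to invoke or mirror the free-independence computation of Section 2 rather than redo it from scratch; the only genuinely new point is that a non-Boolean $Z_i$ is a single $\lambda_{j_i}(T_i)$, which lets that classical argument apply verbatim to the block $Z_{l_1}\cdots Z_{l_2}$.
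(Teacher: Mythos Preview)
Your overall strategy coincides with the paper's: identify each non-Boolean $Z_i$ as a single left representation $\lambda_{j_i}(T_i)$, run the free-independence mechanism on the centered alternating block $Z_{l_1}\cdots Z_{l_2}$ to produce a long alternating tensor, then kill it with the $P_{\uplus,j_{l_1-1}}$ sitting inside the Boolean factor $Z_{l_1-1}$ (or with $\phi$ when $l_1=1$). However, there is a genuine gap in how you initialize the middle step.

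Your assertion that $\eta=Z_{l_2+1}\cdots Z_m\xi$ ``is a vector supported on $\C\xi\oplus\mrx$'' is vacuous: that is all of $\X$. What the argument actually needs, and what the paper uses, is that when $l_2<m$ the hypothesis ``$Z_{l_2+1}$ is a Boolean product'' forces the \emph{range} of $Z_{l_2+1}$, hence $\eta$, to lie in $\C\xi\oplus\mrx_{j_{l_2+1}}$. Only then does $j_{l_2}\neq j_{l_2+1}$ guarantee that the incoming word at the first step $i=l_2$ does not start with the current index, so that your induction ``the incoming word never starts with the current index'' can begin. Relatedly, your parenthetical that on words starting with $j_i$ ``the stay/shorten term picks up the factor $\phi_{j_i}(T_i)=0$'' is incorrect: if $w=v\otimes w'$ with $v\in\mrx_{j_i}$, the ``shorten'' coefficient is the $\xi_{j_i}$-component of $T_iv$, not $\phi_{j_i}(T_i)$. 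That erroneous aside is harmless \emph{provided} the bad case never arises, which is exactly what the control $\eta\in\C\xi\oplus\mrx_{j_{l_2+1}}$ ensures; as written, though, you are leaning on the false claim in lieu of the missing localization of $\eta$. The fix is to invoke the Boolean property of $Z_{l_2+1}$ (as in the preceding Lemma) to pin down $\eta$; after that the paper simply writes down the closed formula
\[
Z_{l_1}\cdots Z_{l_2}(a\xi+v)=a\,(Z_{l_1}\xi)\otimes\cdots\otimes(Z_{l_2}\xi)+(Z_{l_1}\xi)\otimes\cdots\otimes(Z_{l_2}\xi)\otimes v,
\]
a tensor of length $\geq l_2-l_1+1\geq 2$ with leading index $j_{l_1}$, which is precisely what your induction is trying to produce.
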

\begin{proof} Assume that those conditions are satisfied.
If  $l_2<m$,   then $Z_{l_2+1}\cdots Z_{m}\xi$ is   contained in $\mrx_{l_2+1}+\C \xi$ because $Z_{l_2+1}$ is a Boolean product.
It follows that $$Z_{l_1}\cdots Z_m\xi=Z_{l_1}\cdots Z_{l_2}(a\xi+v),$$
where $v\in \X_{l_2'}$ and $a\in \C$.   If $l_2=m$, then $v=0$, $a=1$ and $l'_2$ can be any index other than $l_2$. Since $Z_{l_1}, Z_{l_1+1},\cdots, Z_{l_2}$ are not Boolean products,  $Z_{i}\in\A_{j_{i},\ell}$ for $i=l_1,\cdots,l_2$. Notice that $ \phi_\xi(Z_{i})=0$ implies that $Z_i\xi\in \mrx_{j_i}$ for $i=l_1,\cdots,l_2$.
By  the definitions of $\{\A_{i,\ell}\}_{i\in I}$, we have 
$$Z_{l_1}\cdots Z_{l_2}(a\xi+v)=a(Z_{l_1}\xi)\otimes (Z_{l_1+1}\xi)\otimes\cdots\otimes (Z_{l_2}\xi)+(Z_{l_1}\xi)\otimes (Z_{l_1+1}\xi)\otimes\cdots\otimes (Z_{l_2}\xi)\otimes v,$$
which is a vector in $\mrx.$

If $l_1=1$, then $ Z_1\cdots Z_m\xi=Z_{l_1}\cdots Z_{l_2}(a\xi+v)\in \mrx$. Thus $\phi_\xi(Z_1\cdots Z_m)=0.$

If $l_1>1$,  then $Z_{l_1-1}$ a Boolean product. It follows that  $\beta_{l_1-1}(k)=r $ for some $k$. Let $l$ be the largest number that $\beta_{l_1-1}(l)=r$. 
Then, the product $z^{(l_1-1)}_{l+1,\beta_{l_1-1}(l+1)}\cdots z^{(l_1-1)}_{s_{l_1-1},\beta_{l_1-1}(s_{l_1-1})}\in \A_{j_{l_1-1},\ell}$. 
 According to the  definition of  $\A_{j_{l_1-1},\ell}$, there exist an element $T\in \LL(\X_{j_{l_1-1}})$ such that $z^{(l_1-1)}_{l+1,\beta_{l_1-1}(l+1)}\cdots z^{(l_1-1)}_{s_{l_1-1},\beta_{l_1-1}(s_{l_1-1})}=\lambda_{j_{l_1-1}}(T)$.  Let $$w=a(Z_{l_1}\xi)\otimes (Z_{l_1+1}\xi)\otimes\cdots\otimes (Z_{l_2}\xi)+(Z_{l_1}\xi)\otimes (Z_{l_1+1}\xi)\otimes\cdots\otimes (Z_{l_2}\xi)\otimes v.$$
  Then $w\in \X(j_{l_1-1},\ell)$ and $\phi_\xi(w)=0$.  Since $j_{l_1-1}\neq j_{l_1}$, we have 
$\lambda_{j_{l_1-1}}(T)=bw+v'\otimes w $ for some $b\in \C$ and $v'\in\X_{j_{l_1-1}}$. Since $bw+v'\otimes w$ s a vector the kernel of  $P_{\uplus, j_{l_1-1}}$, we get   $Z_{l_1-1}Z_{l_1\cdots}Z_{l_2}\xi=0.$ 
Thus  we have
$\phi_\xi(Z_1\cdots Z_m)=0.$

\end{proof}

Proposition \ref{Boolean Moments} and Proposition \ref{Free-boolean moments} provide us a recursive algorithm for computing mixed moments of Free-Boolean independent pairs of random variables in the following sense:
To compute the the mixed moments $\phi(z_1\cdots z_n)$ such that $z_k\in \bigcup\limits_{i\in I } (A_{i,\ell}\cup A_{i,r})$ for $k=1,\cdots,n$.  We  first turn  the product into the form of $Z_1\cdots Z_m$ such that $Z_i\in A_{j_i}$ and $j_k\neq j_{k+1}$.  If all $Z_i$ are Boolean products, then the mixed moments is $\phi_\xi(Z_1)\cdots \phi_\xi(Z_m)$ by Proposition \ref{Boolean Moments}. If there are non-Boolean products $Z_{l_1}, Z_{l_2},\cdots, Z_{l_s}$, then $Z_{l_k}=\phi(Z_{l_k})I_{\X}+Z^{\circ}_{l_k}$ where $Z^{\circ}_{l_k}=(Z_{l_k}-\phi(Z_{l_k})I_{\X})$. 
Then we have 
$$ \begin{array}{rcl}
\phi(Z_1\cdots Z_m)&=&\phi(\cdots(\phi(Z_{l_1})I_{\X}+Z^{\circ}_{l_1})\cdots (\phi(Z_{l_2})I_{\X}+Z^{\circ}_{l_2})\cdots)\\
&=&\phi(\cdots(Z^{\circ}_{l_1})\cdots (Z^{\circ}_{l_2})\cdots)+\text{rest.}\\
\end{array}
$$
By Proposition \ref{Free-boolean moments},  $\phi(\cdots(Z^{\circ}_{l_1})\cdots (Z^{\circ}_{l_2})\cdots)=0$.  On the other hand, all terms in rest will be rearranged in the form of $Z'_1\cdots Z'_t$ such that $t<m$.  

Therefore,  the mixed moments $\phi(z_1\cdots z_n)$ is uniquely determine by a linear combination of mixed moments in the form of $\phi(z'_1\cdots z'_{s})$ such that $s< n$. 
Thus we have the following  equivalent definition for free-Boolean independence under Moments conditions.
\begin{theorem}
 Let $((\mathcal{A}_{i,\ell}, \mathcal{A}_{i,r})_{i \in I}$ be a family of pairs of algebras in a non-commutative probability space $(\mathcal{A}, \varphi)$. 
Let $Z_k= z_{1,k}\cdots z_{s_k,k}$ such that $z_{i,k}\in  \A_{j_k,\ell}\cup \A_{j_k,r}$, $s_k>0$ and  $i_{k-1}\neq i_k$ for $k=1,...m$.  The family  $((\mathcal{A}_{i,\ell}, \mathcal{A}_{i,r})_{i \in I}$ is said to be free-Boolean if 
$$\phi(Z_1\cdots Z_m)=\phi(Z_1)\cdots \phi(Z_m)$$
whenever $Z_1,\cdots, Z_m$ are Boolean products, and 
$$\phi(Z_1\cdots Z_m)=0$$ whenever  there exist $1\leq l_1< l_2\leq m$ such that $Z_{l_1}, Z_{l_1+1},\cdots, Z_{l_2}$ are not Boolean products, 
 $Z_{l_1-1}$ is a Boolean product or $l_1=1$,
 $ Z_{l_2+1}$ is a Boolean product or $l_2=m$,
 and $\phi(Z_{l_1})=\phi_\xi(Z_{l_1+1})=\cdots=\phi(Z_{l_2})=0.$
\end{theorem}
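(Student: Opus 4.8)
The plan is to read the statement as an equivalence with Definition~\ref{free-Boolean} and to prove it in two halves. For necessity one shows that a family which is free-Boolean in the sense of Definition~\ref{free-Boolean} satisfies the two displayed moment conditions; this is essentially already done, since by definition such a family has the same joint distribution as the operatorial model $\A_{i,\ell}=\lambda_i(\LL(\X_i))$, $\A_{i,r}=P_{\uplus,i}\lambda_i(\LL(\X_i))P_{\uplus,i}$ inside $(\LL(\X),\phi_\xi)$ built on the reduced free product, and Proposition~\ref{Boolean Moments} and Proposition~\ref{Free-boolean moments} are precisely the two displayed conditions for that model. Here Proposition~\ref{Free-boolean moments} is applied, when it is needed, to a maximal run of consecutive non-Boolean blocks bounded on both sides by Boolean blocks or by an endpoint of the word.

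For sufficiency the plan is to show that the two conditions, together with the linearity of $\phi$ and the distributions of the individual pairs $(\A_{i,\ell},\A_{i,r})$, determine every mixed moment; since the operatorial model satisfies the conditions, any family satisfying them then has that same joint distribution and is therefore free-Boolean. I would argue by induction on the total number of letters $n=s_1+\cdots+s_m$ of an alternating word $Z_1\cdots Z_m$, where $Z_k\in\A_{j_k}$ is a product of letters from $\A_{j_k,\ell}\cup\A_{j_k,r}$ and $j_{k-1}\ne j_k$; the base case is a single block, whose moment is part of the given data. If every $Z_k$ is a Boolean product, the first condition computes $\phi(Z_1\cdots Z_m)$ directly. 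Otherwise I would write each non-Boolean block $Z_l$, which has only $\ell$-letters and hence lies in the unital algebra $\A_{j_l,\ell}$, as $Z_l=\phi(Z_l)1_{\A}+Z_l^{\circ}$ with $\phi(Z_l^{\circ})=0$, and expand the product and apply linearity of $\phi$. The unique term in which every non-Boolean block has been centered vanishes by the second condition, applied to any maximal run of the (now centered) non-Boolean blocks, which by maximality is flanked by Boolean blocks or by the ends of the word; every other term carries a scalar factor $\phi(Z_l)1_{\A}$, and deleting that factor and regrouping into alternating blocks yields a linear combination of moments of words with strictly fewer letters, which are determined by the inductive hypothesis.

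The step I expect to require the most care is the bookkeeping in this reduction. One has to check that after deleting a factor $\phi(Z_l)1_{\A}$ and fusing any two neighbouring blocks that now fall in the same algebra, the resulting word is again alternating, so that the two conditions remain applicable at the next stage, and that its letter count has strictly decreased, which holds because the deleted block contributed $s_l\ge 1$ letters; and one has to verify that in the all-centered term the decomposition of the non-Boolean blocks into maximal runs really does match the hypotheses of the second condition, including the boundary situations where a run begins at the first block or ends at the last. None of this is deep, but it is where a careless setup of the recursion could fail; a tidy way to organize it is to induct on the pair (number of letters, number of non-Boolean blocks) under the lexicographic order, mirroring the recursive algorithm described just before the statement. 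Finally I would record the routine point, already noted after Definition~\ref{free-Boolean}, that the conclusion does not depend on the choice of the realizations $\ell_i,r_i$ or of the spaces $(\X_i,\mrx_i,\xi_i)$.
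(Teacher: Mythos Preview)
Your proposal is correct and follows essentially the same route as the paper: necessity is Propositions~\ref{Boolean Moments} and~\ref{Free-boolean moments} applied to the operatorial model, and sufficiency is exactly the recursive algorithm the paper sketches in the paragraph immediately preceding the theorem, namely centering each non-Boolean block, killing the fully centered term by the second condition, and observing that every remaining term is a shorter alternating word. Your write-up is in fact more careful than the paper's about the induction bookkeeping (regrouping after deleting a block, tracking the decrease in length, handling the boundary cases for maximal runs), so there is nothing to add.
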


\section{ Interval-noncrossing partitions}
 
 In this section, we  introduce the combinatorial tools  to characterize free-Boolean pairs of algebras. It is well know that free independence can be characterized by free cumulants  which are described by noncrossing partitions and Boolean independence can be characterized by partitions related to interval partitions.  To characterize free-Boolean pairs,  we will use a combination of noncrossing partitions and interval partitions.  
 
Here,  we start with some elementary combinatorial concepts.  Given a natural number $k$, we  denote by $[k]$ the set of $\{1,\cdots,k\}$.

\begin{definition}\label{partition} \normalfont  Let $S$ be an ordered set:
\begin{itemize}
\item[1.] A \emph{partition} $\pi$ of a set $S$ is a collection $\{V_1,\cdots,V_r\}$ of disjoint, nonempty sets whose union  is $S$.  The sets $V_1,\cdots,V_r$ are called the blocks of $\pi$. The collection of all partitions of $S$ will be denoted by $P(S)$.
\item[2.] Given two partitions $\pi$ and $\sigma$, we say $\pi\leq \sigma$ if each block of $\pi$ is contained in a block of $\sigma$. This relation is called the \emph{reversed refinement order}.
\item[3.]A partition $\pi\in P(S)$ is \emph{noncrossing} if there is no quadruple $(s_1,s_2,r_1,r_2)$ such that $s_1<r_1<s_2<r_2$, $s_1,s_2\in V$, $r_1,r_2\in W$ and $V,W$ are two different blocks of $\pi$.  
\item[4.]A partition $\pi\in P(S)$ is an \emph{interval} partition  if there is no triple $(s_1,s_2,r)$ such that $s_1<r<s_2$, $s_1,s_2\in V$, $r\in W$ and $V,W$ are two different blocks of $\pi$.  
\item[5.] A block $V$ of a partition  $\pi\in P(S)$ is said to be \emph{inner} if there is block $W\in \pi$ and  $s,t\in W$ such that $s<v<t$ for all $v\in V$. A block is \emph{outer} if it is not inner.
\item[6.] Let $\omega:[k]\rightarrow I$.  We denote by ker $\omega$ the element of $P([k])$ whose blocks are the sets $\omega^{-1}(i), i\in I$.  We denote by $s\sim t$ if $s,t$ are in a same block.
\end{itemize}
\end{definition}
It is obvious that interval partitions are noncrossing and every block of an interval partition is an outer block.  
 In the rest of this section,  we fix an natural number $n\in\mathbb{N}$.  
 Let  $\chi:[n]\rightarrow \{\bullet, \circ \}$ be a map  from the natural ordered set $\{1,\cdots,n\}$  to the set of colors $\{\bullet, \circ\}$.

\begin{definition}\normalfont  A partition $\pi\in \mathcal{P}(n)$ is said to be \emph{interval-noncrossing} with respect to $\chi$ if $\pi$  is  noncrossing  and  no element of $\chi^{-1}(\circ)$ is contained in an inner block of $\pi$.  We denote by $INC(\chi)$ the set of all interval-noncrossing partitions with respect to $\chi$.
\end{definition}

\begin{remark}\normalfont 
 Notice that $1$ and $n$ are never contained in any inner block,  $INC(\chi_1)=INC(\chi_2)$ if $\chi_1=\chi_2$ on $\{2,\cdots,n-1\}$.
\end{remark}

For example,  given two noncrossing partitions $\pi_1=\{\{1,3\},\{2\},\{4,5\}\}$ and $\pi_2=\{\{1\},\{2,3,5\},\{4\}\}$ of $\{1,2,3,4,5\}$.  Let $\chi^{-1}(\circ)=\{2,5\}$. Then,  $\pi_1\not\in INC(\chi)$ and $\pi_2\in INC(\chi)$.\\
\begin{picture}(120.00,42.00)(-30.00, 5.00)

\put(0.00,21.00){\line(0,1){6.00}}
\put(0.00,27.00){\line(1,0){12.00}}
\put(12.00,21.00){\line(0,1){6.00}}
\put(18.00,21.00){\line(0,1){6.00}}

\put(6.00,21.50){\line(0,1){3.50}}

\put(18.00,27.00){\line(1,0){6.00}}
\put(24.00,21.50){\line(0,1){5.50}}

\put(0.00,21.00){\circle*{1.00}}
\put(6.00,21.00){\circle{1.00}}
\put(12.00,21.00){\circle*{1.00}}
\put(18.00,21.00){\circle*{1.00}}
\put(24.00,21.00){\circle{1.00}}

\put(-0.90,17.00) {\footnotesize 1}
\put(05.10,17.00) {\footnotesize 2}
\put(11.10,17.00) {\footnotesize 3}
\put(17.10,17.00) {\footnotesize 4}
\put(23.10,17.00) {\footnotesize 5}
 

\put(50.00,21.00){\line(0,1){4.00}}
\put(56.00,27.00){\line(1,0){18.00}}
\put(62.00,21.00){\line(0,1){6.00}}
\put(68.00,21.00){\line(0,1){4.00}}

\put(56.00,21.50){\line(0,1){5.50}}

\put(68.00,27.00){\line(1,0){6.00}}
\put(74.00,21.50){\line(0,1){5.50}}

\put(50.00,21.00){\circle*{1.00}}
\put(56.00,21.00){\circle{1.00}}
\put(62.00,21.00){\circle*{1.00}}
\put(68.00,21.00){\circle*{1.00}}
\put(74.00,21.00){\circle{1.00}}

\put(49.0,17.00) {\footnotesize 1}
\put(55.10,17.00) {\footnotesize 2}
\put(61.10,17.00) {\footnotesize 3}
\put(67.10,17.00) {\footnotesize 4}
\put(73.10,17.00) {\footnotesize 5}

\put(0,5){{\it Figure 1.} Diagram of the partition $\pi_1$ and $\pi_2$}
\end{picture}

Furthermore, if $\chi^{-1}(\circ)=[n]$, then $INC(\chi)= I(n)$ where $I(n)$ is the set of interval partitions on $[n]$.  
If $\chi^{-1}(\circ)=\emptyset$, then $INC(\chi)= NC(n)$ where $NC(n)$ is the set of noncrossing partitions on $[n]$.

Now, we turn to study  relations between $INC(\chi)$ and noncrossing partitions. 
We will show that $INC(\chi)$ is a lattice for each coloring $\chi$. Since the values of $\chi$ at $1$ and $n$ do not change $INC(\chi)$, in the rest of this section, we will assume that $\chi(1)=\chi(n)=\circ$. Thus suppose that   $\chi^{-1}(\circ)=\{1=l_0<l_1<\cdots< l_m=n\}$. Given integers $n_1<n_2$, we denote by $[n_1,n_2]$ the interval $\{n_1,n_2+1,\cdots,n_2\}$. When $m\geq2$, we define the following maps :
\begin{itemize}
\item For each $i=1,\cdots,m$, let $\alpha_i: INC(\chi)\rightarrow P([l_{i-1},l_{i}])$ such that $\alpha_i(\pi)$ is the restriction of $\pi$ to $[l_{i-1},l_{i}]$.
\item Let $\alpha': INC(\chi)\rightarrow P([l_1,n])$ such that $\alpha'(\pi)$ is the restriction of $\pi$ to the  interval $[l_1,n]$.
\end{itemize}
Since restrictions of partitions on intervals just break some blocks of the original partitions, they do not turn any outer block into an inner block. Therefore, the range of $\alpha_1$ is a subset of $NC([1,l_1])$ and the range of $\alpha'$ is contained in  $INC(\chi')$  where $\chi'$ is the restriction of $\chi$ to the set $[l_1,\cdots,n]$. 

For example, let $n=10$, $\chi^{-1}(\circ)=\{1,3,7,8,9,10\}$ and $\pi=\{\{1,3,4,7\},\{2\},\{5,6\},\{9,8\},\{10\}\}$ which is interval-noncrossing with respect to $\chi$ as shown in the following diagram.\\

\begin{picture}(90.00,30.00)(-50.00, 5.00)

\put(0.00,21.50){\line(0,1){7.50}}
\put(12.00,21.50){\line(0,1){7.50}}
\put(18.00,21.00){\line(0,1){8.00}}
\put(0.00,29.00){\line(1,0){36.00}}
\put(36.00,21.50){\line(0,1){7.50}}

\put(6.00,21.00){\line(0,1){4.00}}

\put(24.00,21.00){\line(0,1){4.00}}
\put(24.00,25.00){\line(1,0){6.00}}
\put(30.00,21.00){\line(0,1){4.00}}

\put(48.00,21.50){\line(0,1){7.50}}
\put(42.00,29.00){\line(1,0){6.00}}
\put(42.00,21.50){\line(0,1){7.50}}

\put(54.00,21.50){\line(0,1){7.50}}

\put(0.00,21.00){\circle{1.00}}
\put(6.00,21.00){\circle*{1.00}}
\put(12.00,21.00){\circle{1.00}}
\put(18.00,21.00){\circle*{1.00}}
\put(24.00,21.00){\circle*{1.00}}
\put(30.00,21.00){\circle*{1.00}}
\put(36.00,21.00){\circle{1.00}}
\put(42.00,21.00){\circle{1.00}}
\put(48.00,21.00){\circle{1.00}}
\put(54.00,21.00){\circle{1.00}}

\put(-0.90,17.00) {\footnotesize 1}
\put(05.10,17.00) {\footnotesize 2}
\put(11.10,17.00) {\footnotesize 3}
\put(17.10,17.00) {\footnotesize 4}
\put(23.10,17.00) {\footnotesize 5}
\put(29.10,17.00) {\footnotesize 6}
\put(35.10,17.00) {\footnotesize 7}
\put(41.10,17.00) {\footnotesize 8}
\put(47.10,17.00) {\footnotesize 9}
\put(53.10,17.00) {\footnotesize 10}

\put(-5,9){{\it Figure 2.} Diagram of the partition $\pi$.}
\end{picture}

Then, $m=5$, $\alpha_1(\pi)=\{\{1,3\},\{2\}\}$,  $\alpha_2(\pi)=\{\{3,4,7\},\{5,6\}\}$, $\alpha_3(\pi)=\{\{7\},\{8\}\}$, $\alpha_4(\pi)=\{\{8,9\}\}$, $\alpha_5(\pi)=\{\{9\},\{10\}\}$  and $\alpha'(\pi)=\{\{3,4,7\},\{5,6\},\{8,9\},\{10\}\}$  are illustrated below:\\

\begin{picture}(90.00,30.00)(-80.00, 5.00)

\put(-50.00,21.50){\line(0,1){7.50}}
\put(-38.00,21.50){\line(0,1){7.50}}
\put(-50.00,29.00){\line(1,0){12.00}}
\put(-44.00,21.50){\line(0,1){4.00}}

\put(-50.90,17.00) {\footnotesize 1}
\put(-44.90,17.00) {\footnotesize 2}
\put(-38.90,17.00) {\footnotesize 3}

\put(-50.00,21.00){\circle{1.00}}
\put(-44.00,21.00){\circle*{1.00}}
\put(-38.00,21.00){\circle{1.00}}

\put(-48,11){$\alpha_1(\pi)$}

\put(-30.00,21.50){\line(0,1){7.50}}
\put(-24.00,21.50){\line(0,1){7.50}}
\put(-30.00,29.00){\line(1,0){24.00}}
\put(-6.00,21.50){\line(0,1){7.50}}

\put(-18.00,21.00){\line(0,1){4.00}}
\put(-18.00,25.00){\line(1,0){6.00}}
\put(-12.00,21.00){\line(0,1){4.00}}

\put(-30.9,17.00) {\footnotesize 3}
\put(-24.9,17.00) {\footnotesize 4}
\put(-18.9,17.00) {\footnotesize 5}
\put(-12.9,17.00) {\footnotesize 6}
\put(-6.9,17.00) {\footnotesize 7}

\put(-30.00,21.00){\circle{1.00}}
\put(-24.00,21.00){\circle*{1.00}}
\put(-18.00,21.00){\circle*{1.00}}
\put(-12.00,21.00){\circle*{1.00}}
\put(-6.00,21.00){\circle{1.00}}

\put(-23,11){$\alpha_2(\pi)$}

\put(8.00,21.50){\line(0,1){7.50}}
\put(2.00,21.50){\line(0,1){7.50}}

\put(2.00,21.00){\circle{1.00}}
\put(8.00,21.00){\circle{1.00}}

\put(1.10,17.00) {\footnotesize 7}
\put(7.10,17.00) {\footnotesize 8}

\put(0,11){$\alpha_3(\pi)$}

\put(16.00,21.50){\line(0,1){7.50}}
\put(22.00,21.50){\line(0,1){7.50}}
\put(16.00,29.00){\line(1,0){6.00}}

\put(22.00,21.00){\circle{1.00}}
\put(16.00,21.00){\circle{1.00}}

\put(21.10,17.00) {\footnotesize 9}
\put(15.10,17.00) {\footnotesize 8}

\put(14,11){$\alpha_4(\pi)$}

\put(30.00,21.50){\line(0,1){7.50}}
\put(36.00,21.50){\line(0,1){7.50}}

\put(30.00,21.00){\circle{1.00}}
\put(36.00,21.00){\circle{1.00}}

\put(29.10,17.00) {\footnotesize 9}
\put(35.10,17.00) {\footnotesize 10}

\put(28,11){$\alpha_5(\pi)$}

\end{picture}

\begin{picture}(90.00,30.00)(-40.00, 5.00)

\put(12.00,21.50){\line(0,1){7.50}}
\put(18.00,21.50){\line(0,1){7.50}}
\put(12.00,29.00){\line(1,0){24.00}}
\put(36.00,21.50){\line(0,1){7.50}}

\put(24.00,21.50){\line(0,1){3.50}}
\put(24.00,25.00){\line(1,0){6.00}}
\put(30.00,21.50){\line(0,1){3.50}}

\put(48.00,21.50){\line(0,1){7.50}}
\put(42.00,29.00){\line(1,0){6.00}}
\put(42.00,21.50){\line(0,1){7.50}}

\put(54.00,21.50){\line(0,1){7.50}}

\put(12.00,21.00){\circle{1.00}}
\put(18.00,21.00){\circle*{1.00}}
\put(24.00,21.00){\circle*{1.00}}
\put(30.00,21.00){\circle*{1.00}}
\put(36.00,21.00){\circle{1.00}}
\put(42.00,21.00){\circle{1.00}}
\put(48.00,21.00){\circle{1.00}}
\put(54.00,21.00){\circle{1.00}}

\put(11.10,17.00) {\footnotesize 3}
\put(17.10,17.00) {\footnotesize 4}
\put(23.10,17.00) {\footnotesize 5}
\put(29.10,17.00) {\footnotesize 6}
\put(35.10,17.00) {\footnotesize 7}
\put(41.10,17.00) {\footnotesize 8}
\put(47.10,17.00) {\footnotesize 9}
\put(53.10,17.00) {\footnotesize 10}

\put(27,11){$\alpha'(\pi)$}
\end{picture}

Given  $\pi=\{V_1,\cdots,V_k \}\in INC(\chi)$. Suppose that $l_1\in V_j$ for some $j$. Let $W=\{t\leq l_1| t\in V_j\}$, $S=\{V_i| t<l_1, \forall t\in V_i\}$, $W'=\{t\geq l_1| t\in V_j\}$ and  $S'=\{V_i| t>l_1, \forall t\in V_i\}$.

\begin{lemma}\label{W1} \normalfont $\alpha_1(\pi)=S\cup \{W\}$ and $\alpha'(\pi)=S'\cup \{W'\}$.
\end{lemma}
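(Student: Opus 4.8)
The plan is to show that the point $l_1$ cuts $\pi$ cleanly: apart from the block $V_j$ that contains $l_1$, no block of $\pi$ has elements both below and above $l_1$. Granting this \emph{separation property}, the two identities drop out by just intersecting each block with $[1,l_1]$ and with $[l_1,n]$.

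First I would establish the separation property: if $V_i\in\pi$ with $V_i\neq V_j$, then either every element of $V_i$ is $<l_1$ or every element of $V_i$ is $>l_1$. Suppose not; since $\pi$ is a partition and $l_1\in V_j$, we have $l_1\notin V_i$, so we may pick $a,b\in V_i$ with $a<l_1<b$. I then split into cases according to the position of $V_j$ relative to $a$ and $b$. If $V_j$ contains some $c>b$, then $a<l_1<b<c$ with $a,b\in V_i$ and $l_1,c\in V_j$ is a crossing, contradicting that $\pi$ is noncrossing. If $V_j$ contains some $c<a$, then $c<a<l_1<b$ with $c,l_1\in V_j$ and $a,b\in V_i$ is again a crossing. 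In the only remaining case, every element $v$ of $V_j$ satisfies $a\le v\le b$, and since $a,b\in V_i\neq V_j$ we even get $a<v<b$ for all $v\in V_j$; thus $V_j$ is an inner block, contradicting $\pi\in INC(\chi)$ because $l_1\in V_j$ and $l_1\in\chi^{-1}(\circ)$. These cases are exhaustive, so the separation property holds.

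Next I would read off the restrictions. By definition $\alpha_1(\pi)$ is the set of nonempty sets $V_i\cap[1,l_1]$ with $V_i\in\pi$. A block $V_i\in S$ consists of elements $<l_1$, hence lies in $[1,l_1]$ and is unchanged by the restriction; a block $V_i\in S'$ consists of elements $>l_1$, hence meets $[1,l_1]$ in the empty set and disappears; and by the separation property $S\cup S'\cup\{V_j\}$ is the list of all blocks of $\pi$. Finally $V_j\cap[1,l_1]=\{t\in V_j\mid t\le l_1\}=W$, which is nonempty since $l_1\in W$. Therefore $\alpha_1(\pi)=S\cup\{W\}$, and it is a partition of $[1,l_1]$ since $S$ and $\{W\}$ come from disjoint blocks of $\pi$ and cover $[1,l_1]$. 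The computation for $\alpha'$ is symmetric: blocks in $S'$ survive unchanged, blocks in $S$ drop out, $V_j\cap[l_1,n]=W'$, and hence $\alpha'(\pi)=S'\cup\{W'\}$.

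The main obstacle is the separation property, and in particular checking that the case analysis is complete; note that the only case that genuinely uses the interval-noncrossing hypothesis (rather than mere noncrossing-ness) is the one where $V_j$ is sandwiched strictly between $a$ and $b$ — this is exactly where the fact that $l_1$ has color $\circ$, hence $V_j$ cannot be an inner block, is invoked.
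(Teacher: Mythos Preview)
Your proof is correct and follows essentially the same route as the paper's. The paper argues that if $t<l_1<t'$ lie in a block $V_i$ not containing $l_1$, then ``according to the definition of interval-noncrossing partitions, $t, l_1, t'$ must be in the same block,'' and derives the contradiction; you simply unpack that one-line invocation into the explicit case analysis (two crossing cases plus the inner-block case), which is exactly the content hidden behind the paper's appeal to the definition.
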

\begin{proof}
It is sufficient to show that 
$$W\cup\bigcup\limits_{V_i\in S} V_i =\{1,\cdots,l_1\}.$$
Let $t\in V_i$ for some $i$,  $t<l_1$ and $l_1\not\in V_i$ . 
If there is a $t'\in V_i$ such that $t' > l_1$, then  according to the definition of interval-noncrossing partitions, $t, l_1, t'$ must be in the same block,  which contradicts our assumption. 
This shows that  all the elements of $V_i$ must be less than $l_1$, hence $V_i\in S$.  
Therefore,  for all $t<l_1$ and $t, l_1$ are not in the same block of $\pi$, $t$ must be contained in a block of $S$. 
On the other hand,  let  $t<\l_1$  such that $t, l_1$ are in the same block of $\pi$. Then  $t$ is  contained in $W$. 
Therefore,  $S\cup \{W\}$ is a partition of $\{1,\cdots,l_1\}$ and $\alpha_1(\pi)=S\cup \{W\}$. The same $\alpha'(\pi)=S'\cup \{W'\}$.
\end{proof}

\begin{lemma} \normalfont Let  $\alpha_1':INC(\chi)\rightarrow NC(l_1)\times INC(\chi')$ such that 
$$\alpha'_1(\pi)=(\alpha_1(\pi), \alpha'(\pi)). $$
Then $\alpha'_1$ is an order-preserving bijection from  $INC(\chi)$ to $NC(l_1)\times INC(\chi')$ where  the order on $NC(l_1)\times INC(\chi')$ is given by  $(\sigma_1,\sigma_1')\leq (\sigma_2,\sigma_2') $ if and only if $ \sigma_1\leq \sigma_2$ and  $ \sigma'_1\leq \sigma'_2$. Moreover, ${\alpha'_1}^{-1}$ is also order-preserving.
\end{lemma}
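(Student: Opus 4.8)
The plan is to prove the three assertions — that $\alpha'_1$ is a bijection, that it is order-preserving, and that its inverse is order-preserving — essentially by exhibiting an explicit inverse and checking it interacts well with the reversed refinement order. First I would describe the candidate inverse $\beta: NC(l_1)\times INC(\chi')\to INC(\chi)$. Given $(\sigma,\sigma')$, the point $l_1$ lies in a block $U$ of $\sigma$ and in a block $U'$ of $\sigma'$; I would define $\beta(\sigma,\sigma')$ to be the partition of $[n]$ whose blocks are: all blocks of $\sigma$ not containing $l_1$, all blocks of $\sigma'$ not containing $l_1$, and the single block $U\cup U'$ (which glues the two pieces back together along $l_1$). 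The content of Lemma~\ref{W1} is exactly that $\alpha_1$ and $\alpha'$ decompose a given $\pi\in INC(\chi)$ in the way that $\beta$ reassembles it, so $\beta\circ\alpha'_1=\mathrm{id}$ follows immediately from that lemma. For the other composite $\alpha'_1\circ\beta=\mathrm{id}$ I need two things: that $\beta(\sigma,\sigma')$ actually lands in $INC(\chi)$, and that restricting it back to $[1,l_1]$ and $[l_1,n]$ recovers $\sigma$ and $\sigma'$.

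The restriction check is routine: restricting $U\cup U'$ to $[1,l_1]$ gives back $U$ (since $U'\setminus\{l_1\}\subseteq[l_1+1,n]$) and the blocks of $\sigma$ not containing $l_1$ are already inside $[1,l_1]$, so $\alpha_1(\beta(\sigma,\sigma'))=\sigma$, and symmetrically $\alpha'(\beta(\sigma,\sigma'))=\sigma'$. The substantive point — and the step I expect to be the main obstacle — is showing $\beta(\sigma,\sigma')\in INC(\chi)$, i.e. that the reassembled partition is still noncrossing and that no element of $\chi^{-1}(\circ)$ becomes inner. Noncrossingness: a crossing would have to involve the glued block $U\cup U'$ together with some other block $V$, and since everything in $[1,l_1)$ is separated from everything in $(l_1,n]$ by the fact that $V$ lies entirely on one side of $l_1$, a crossing of $U\cup U'$ with $V$ would restrict to a crossing of $U$ with $V$ in $NC(l_1)$ (if $V\subseteq[1,l_1]$) or of $U'$ with $V$ in $INC(\chi')$ (if $V\subseteq[l_1,n]$) — contradiction; a crossing between two blocks both different from $U\cup U'$ is impossible for the same side-splitting reason. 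For the colored condition: any inner block $V$ of $\beta(\sigma,\sigma')$ is nested inside some block $W$; since $l_1$ is an endpoint-type element ($1\le l_1$), $W$ must lie entirely on one side of $l_1$ or be the glued block $U\cup U'$, and in either case one checks $V$ already appears as an inner block of $\sigma$ or of $\sigma'$ with the same nesting witness (here I would use that $U\cup U'$, restricted to either side, is a block of $\sigma$ resp. $\sigma'$). Hence $V$ contains no $\circ$-colored element by the interval-noncrossing hypotheses on $\sigma$ (vacuously, as $\sigma\in NC(l_1)$ places no color constraint beyond $l_1$ being an endpoint) and $\sigma'$. One small care point: $l_1$ itself is $\circ$-colored by our standing assumption $\chi(1)=\chi(n)=\circ$ extended along $\chi^{-1}(\circ)$, but $l_1$ never becomes inner in $\beta(\sigma,\sigma')$ precisely because it sits in the glued block which reaches both the far-left region and the far-right region, so no block can straddle it.

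For order-preservation in the forward direction: if $\pi_1\le\pi_2$ in $INC(\chi)$, then every block of $\pi_1$ sits in a block of $\pi_2$, and restricting to $[1,l_1]$ (resp. $[l_1,n]$) preserves this containment, so $\alpha_1(\pi_1)\le\alpha_1(\pi_2)$ and $\alpha'(\pi_1)\le\alpha'(\pi_2)$, which is exactly $\alpha'_1(\pi_1)\le\alpha'_1(\pi_2)$ in the product order. For the inverse direction: suppose $(\sigma_1,\sigma'_1)\le(\sigma_2,\sigma'_2)$; I want $\beta(\sigma_1,\sigma'_1)\le\beta(\sigma_2,\sigma'_2)$. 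A block of $\beta(\sigma_1,\sigma'_1)$ is either a non-$l_1$ block of $\sigma_1$ — contained in a block of $\sigma_2$, hence in a block of $\beta(\sigma_2,\sigma'_2)$ (the only subtlety being if that $\sigma_2$-block is the one containing $l_1$, in which case it is absorbed into the glued block of $\beta(\sigma_2,\sigma'_2)$, still fine) — or symmetrically a non-$l_1$ block of $\sigma'_1$, or the glued block $U_1\cup U'_1$; in the last case $U_1$ lies in the $l_1$-block $U_2$ of $\sigma_2$ and $U'_1$ lies in the $l_1$-block $U'_2$ of $\sigma'_2$, so $U_1\cup U'_1\subseteq U_2\cup U'_2$, which is the glued block of $\beta(\sigma_2,\sigma'_2)$. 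This completes the argument, and combined with $\beta=(\alpha'_1)^{-1}$ it shows ${\alpha'_1}^{-1}=\beta$ is order-preserving.
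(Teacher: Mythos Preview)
Your proof is correct and follows essentially the same approach as the paper: both construct the explicit gluing inverse $\beta(\sigma,\sigma')=S\cup S'\cup\{U\cup U'\}$ and verify it lands in $INC(\chi)$ and inverts $\alpha'_1$. In fact you are slightly more thorough than the paper, which checks forward order-preservation but does not spell out the argument that $(\alpha'_1)^{-1}$ is order-preserving; your block-by-block case analysis for $\beta$ fills that gap.
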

\begin{proof}First, we show that the map is well-defined and surjective.
Given two partitions $\sigma\in NC(l_1)$ and $\sigma'\in INC(\chi') $. 
Let $\sigma=S\cup \{W\}$ where $S$ is the family of blocks which does not contain $l_1$ and $W$ is the block which contains $l_1$, $\sigma'=S'\cup \{W'\}$ where $S'$ is the family of blocks which does not contain $l_1$ and $W'$ is the block which contains $l_1$,  $\pi=S\cup S' \cup \{W\cup W'\}$.  
Since $W$ and $W'$ contain the endpoints of the sets $\{1,\cdots,l_1\}$ and $\{l_1,\cdots, n\}$, they are outer blocks of $\sigma$ and $\sigma'$ respectively.
 Therefore, $W\cup W'$ is an outer block which contains $l_1$.  On the other hand,  $l_2,\cdots,l_m$ are contained in outer blocks of $S'$. Therefore, $\pi\in INC(\chi)$. By the construction of $\alpha'_1$, we have that $\alpha'_1(\pi)=(\sigma,\sigma'). $

Next, we show that $\alpha'_1$ is injectitive.
Given two partitions $\pi_1\neq\pi_2\in INC(\chi)$.  Suppose that $\alpha_1'(\pi_1)=\alpha_1'(\pi_2)=(S\cup \{W\},S'\cup \{W'\} )$ where $W$ and $W'$ are the blocks which contain $l_1$.  If $V$ is a block of $\pi_1$ which is contained in $S$ or $S'$, then  $V$ must be a block of $\pi_2$.  If $V$ is a block of $\pi_1$ which is not contained  in $S\cup S'$, then $V=W\cup W'$, by definition,  which is a block of $\pi_2$ also. Therefore, $\pi_1\leq \pi_2$. The same $\pi_2\leq \pi_1$, which implies $\pi_1=\pi_2$.

Now we show that $\alpha'_1$ is order-preserving.
 Let $\pi_1,\pi_2\in INC(\chi)$ such that $\pi_1\leq\pi_2$. Suppose that $\alpha_1'(\pi_1)=(S_1\cup \{W_1\},S_1'\cup \{W_1'\} )$ and $\alpha_1'(\pi_2)=(S_2\cup \{W_2\},S_2'\cup \{W_2'\} )$. Let $V$ be a block of  $S_1\cup \{W_1\}$. We need to show that $V$ is contained in a block of $S_2\cup \{W_2\}$. We distinguish two cases:\\
1.  If $V\in S_1$, then, by definition,  all the elements of $V$ are less than $l_1$. On the other hand,  $V$ is a subset of a block $V'$ of $\pi_2$.  If $l_1\not\in V'$, then $V'\in S_2$.   If $l_1\in V'$, then $V'=W_2\cup W_2'$, then $V\subseteq V'\cap \{1,\cdots,l_1\}=W_2$. Therefore, $V$ is contained in a block of $ S_2\cup \{W_2\}$. \\
2. If $V=W'_1$, then $W_1\cup W_1'$ is the block of $\pi_1$ which contains $l_1$. Because that $\pi_1\leq \pi_2$, $W_1\cup W_1'$ is contained in  $W_2\cup W_2'$ which is the block of $\pi_2$ contains $l_1$.   Therefore, $V= W_1\cup W_1'\cap \{1,\cdots,l_1\}= W_2\cup W_2'\cap \{1,\cdots,l_1\}=W_2$.  Thus, $V$ is contained in a block of $ S_2\cup \{W_2\}$.\\ Since $V$ is arbitrary, $S_1\cup \{W_1\}\leq S_2\cup \{W_2\}$. 
Similarly,  we have  $S'_1\cup \{W'_1\}\leq S'_2\cup \{W'_2\}.$
\end{proof}

Applying  the preceding lemma finitely many times to interval noncrossing partitions, we obtain the following result. 
\begin{proposition}\label{lattice isomorphism} \normalfont 
Let  $\alpha:INC(\chi)\rightarrow NC([1,l_1])\times NC([l_1,l_2])\times\cdots \times NC([l_{m-1},n])$ such that 
$$\alpha(\pi)=(\alpha_1(\pi), \cdots,\alpha_m(\pi)). $$
Then $\alpha$ is an  isomorphism of partial ordered sets. 
\end{proposition}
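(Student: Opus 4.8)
The plan is to prove Proposition \ref{lattice isomorphism} by induction on $m$, the number of elements of $\chi^{-1}(\circ)$ strictly between $1$ and $n$ (equivalently, $m$ is determined by $\chi^{-1}(\circ)=\{1=l_0<l_1<\cdots<l_m=n\}$). The key structural input is the preceding lemma, which handles the decomposition of $INC(\chi)$ at the single marked point $l_1$ into $NC(l_1)\times INC(\chi')$, where $\chi'$ is the restriction of $\chi$ to $[l_1,n]$; the preceding lemma also records that this is an order isomorphism in both directions. So the strategy is simply to iterate this one-step decomposition.

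First I would set up the induction. For the base case $m=1$ there is nothing to prove: $INC(\chi)=NC([1,l_1])=NC([1,n])$ and $\alpha=\alpha_1$ is the identity. For $m\geq 2$, I would apply the preceding lemma to get an order isomorphism $\alpha'_1: INC(\chi)\to NC([1,l_1])\times INC(\chi')$, $\pi\mapsto(\alpha_1(\pi),\alpha'(\pi))$, whose inverse is also order-preserving. The coloring $\chi'$ on $[l_1,n]$ has $\chi'^{-1}(\circ)=\{l_1<l_2<\cdots<l_m=n\}$, so the number of interior marked points for $\chi'$ is $m-1$, and the induction hypothesis applies: there is an order isomorphism $\beta: INC(\chi')\to NC([l_1,l_2])\times\cdots\times NC([l_{m-1},n])$ with $\beta(\sigma')=(\beta_2(\sigma'),\dots,\beta_m(\sigma'))$, where $\beta_j$ is the restriction of $\sigma'$ to $[l_{j-1},l_j]$. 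Composing, $(\mathrm{id}_{NC([1,l_1])}\times\beta)\circ\alpha'_1$ is an order isomorphism from $INC(\chi)$ onto $NC([1,l_1])\times NC([l_1,l_2])\times\cdots\times NC([l_{m-1},n])$, and it is again an isomorphism in both directions since products and composites of order isomorphisms are order isomorphisms.

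The one genuine verification needed is that this composite is exactly the map $\alpha$ in the statement, i.e. that its $j$-th component is $\alpha_j$ for each $j$. The first component is $\alpha_1$ by construction. For $j\geq 2$, the $j$-th component is $\beta_j\circ\alpha'$, which sends $\pi$ to the restriction to $[l_{j-1},l_j]$ of the restriction of $\pi$ to $[l_1,n]$; since $[l_{j-1},l_j]\subseteq[l_1,n]$, restricting to $[l_1,n]$ and then to $[l_{j-1},l_j]$ is the same as restricting directly to $[l_{j-1},l_j]$, which is $\alpha_j(\pi)$. (Here I am using the elementary fact that restriction of a partition to a subinterval is transitive under nested intervals.) Hence the composite equals $\alpha$, completing the induction.

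I do not expect a serious obstacle here; the proposition is essentially a bookkeeping consequence of the preceding lemma once the induction is organized. The only point requiring a little care is making sure that the product order on the $m$-fold product $NC([1,l_1])\times\cdots\times NC([l_{m-1},n])$ is correctly matched up under the iteration — that $\pi_1\leq\pi_2$ in $INC(\chi)$ holds if and only if $\alpha_j(\pi_1)\leq\alpha_j(\pi_2)$ for every $j$ — but this follows because at each stage the preceding lemma gives an order isomorphism whose inverse is also order-preserving, and the product of such maps has the same property. So the induction goes through cleanly.
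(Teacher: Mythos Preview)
Your proposal is correct and is exactly the approach the paper takes: the paper's proof consists of the single sentence ``Applying the preceding lemma finitely many times to interval noncrossing partitions, we obtain the following result,'' and your induction on $m$ is precisely the clean way to make that iteration explicit. (One tiny expository slip: your parenthetical describes $m$ as the number of interior marked points, but with $\chi^{-1}(\circ)=\{l_0,\dots,l_m\}$ that count is actually $m-1$; this does not affect the argument.)
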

It follows from the preceding proposition that the poset $INC(\chi)$ is isomorphic to the poset $NC([1,l_1])\times NC([l_1,l_2])\times\cdots \times NC([l_{m-1},n])$. Since the second partial ordered set is a lattice, we have the following result.
\begin{proposition} \normalfont  $INC(\chi)$ is a lattice with respect to the reverse refinement order $\leq$ on partitions.
\end{proposition}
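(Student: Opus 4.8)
The plan is to deduce this directly from Proposition \ref{lattice isomorphism}, using the standard fact that a finite partially ordered set which is isomorphic to a lattice is itself a lattice, and that a finite product of lattices is a lattice under the coordinatewise order. First I would recall that $NC(k)$, the set of noncrossing partitions of $[k]$ with the reverse refinement order, is a well-known lattice (indeed a finite lattice, so meets and joins automatically exist once one checks one of them exists; this is classical, see e.g. the references already in the paper for free probability combinatorics). Consequently each factor $NC([l_{j-1},l_j])$ in the product appearing in Proposition \ref{lattice isomorphism} is a lattice.

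Next I would observe that the product poset $NC([1,l_1])\times NC([l_1,l_2])\times\cdots\times NC([l_{m-1},n])$, equipped with the order $(\sigma_1,\dots,\sigma_m)\le(\tau_1,\dots,\tau_m)$ iff $\sigma_j\le\tau_j$ for every $j$, is a lattice: given two elements, the coordinatewise meet $(\sigma_1\wedge\tau_1,\dots,\sigma_m\wedge\tau_m)$ is a lower bound, and it is the greatest lower bound because any common lower bound must be dominated coordinatewise by each $\sigma_j\wedge\tau_j$; the join is handled symmetrically. So the target poset in Proposition \ref{lattice isomorphism} is a lattice.

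Finally, since $\alpha$ is an isomorphism of partially ordered sets by Proposition \ref{lattice isomorphism}, it transports the lattice structure back: for $\pi_1,\pi_2\in INC(\chi)$, the element $\alpha^{-1}\big(\alpha(\pi_1)\wedge\alpha(\pi_2)\big)$ is the meet of $\pi_1$ and $\pi_2$ in $INC(\chi)$, and likewise for the join, because order-isomorphisms preserve and reflect all least upper bounds and greatest lower bounds. This establishes that $INC(\chi)$ is a lattice with respect to the reverse refinement order. (When $m=1$, i.e. $\chi^{-1}(\circ)=\{1,n\}$, one has $INC(\chi)=NC(n)$ directly and there is nothing to prove; when $m\ge 2$ the argument above applies.)

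I do not expect any genuine obstacle here: the only thing to be a little careful about is that $\alpha$ really is a poset isomorphism in both directions, i.e. that $\alpha^{-1}$ is also order-preserving — but this was already secured in the lemma preceding Proposition \ref{lattice isomorphism} and is inherited through the finitely many iterations. The result is thus essentially a formal corollary.
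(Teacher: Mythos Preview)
Your proposal is correct and follows essentially the same approach as the paper: the paper simply observes that by the preceding proposition $INC(\chi)$ is order-isomorphic to the product $NC([1,l_1])\times\cdots\times NC([l_{m-1},n])$, and since the latter is a lattice, so is $INC(\chi)$. You have spelled out the standard details (products of lattices are lattices, order-isomorphisms transport meets and joins) that the paper leaves implicit.
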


\begin{proposition}\normalfont   Let $\pi=\{V_1,\cdots,V_t\}\in INC(\chi)$ and let $\sigma$  be a partition of $[n]$ such that $\sigma\leq \pi$, i.e., each block of $\sigma$ is contained in a block of $\pi$. Then, $\sigma\in INC(\chi)$ if and only if $\sigma|V_s\in INC(\chi|_{V_s})$ for all $s=1,\cdots,t$.
\end{proposition}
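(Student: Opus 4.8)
The plan is to decouple the two requirements in the definition of $INC$ --- noncrossingness and the absence of a $\circ$-colored point in any inner block --- and to transfer each requirement between $\sigma$ and the restrictions $\sigma|_{V_s}$, using crucially that $\pi$ is itself noncrossing and lies in $INC(\chi)$. Throughout one identifies each block $V_s$ with $[\,|V_s|\,]$ via the order isomorphism and $\chi|_{V_s}$ with the induced coloring; since noncrossingness and the inner/outer dichotomy are order-theoretic, this identification is harmless.

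First I would record two elementary facts about a noncrossing partition $P$ of a finite ordered set. (i) The restriction of $P$ to any subset is again noncrossing, because a quadruple witnessing a crossing in the restriction is a crossing of $P$. (ii) If $V \neq W$ are blocks of $P$ with $u,w \in W$ and some $v \in V$ satisfying $u < v < w$, then every element of $V$ lies strictly between $u$ and $w$; otherwise an element of $V$ on the far side of $u$ or of $w$, together with $u,w,v$, exhibits a crossing of $P$. I would also note the structural remark that, as $\sigma \le \pi$, every block of $\sigma$ lies in a unique block $V_s$ of $\pi$, the blocks of $\sigma|_{V_s}$ are precisely the blocks of $\sigma$ contained in $V_s$, and hence a block $A \subseteq V_s$ that is inner in $\sigma|_{V_s}$ (witnessed by two points of some block $B \subseteq V_s$) is automatically inner in $\sigma$.

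With these in hand the forward direction is short: if $\sigma \in INC(\chi)$ then each $\sigma|_{V_s}$ is noncrossing by (i), and if some block $A$ of $\sigma|_{V_s}$ were inner there and contained a point $j$ with $\chi(j)=\circ$, the structural remark would make $A$ inner in $\sigma$, contradicting $\sigma \in INC(\chi)$; hence $\sigma|_{V_s} \in INC(\chi|_{V_s})$. For the reverse direction, assume $\sigma|_{V_s} \in INC(\chi|_{V_s})$ for all $s$. To see $\sigma$ is noncrossing, a putative crossing $s_1 < r_1 < s_2 < r_2$ with $\{s_1,s_2\}$ in a block $A \subseteq V_s$ and $\{r_1,r_2\}$ in a block $B \subseteq V_{s'}$ is either a crossing of $\sigma|_{V_s}$ (if $s=s'$) or a crossing of $\pi$ (if $s \neq s'$) --- impossible in both cases. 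Now suppose $A$ is an inner block of $\sigma$ containing a point $j$ with $\chi(j)=\circ$, say $A \subseteq V_s$, witnessed by $u,w$ in a block $B \subseteq V_{s'}$. If $s'=s$, then $A$ is inner in $\sigma|_{V_s}$ and $j$ contradicts $\sigma|_{V_s}\in INC(\chi|_{V_s})$. If $s'\neq s$, pick any $v_0 \in A$; then $u < v_0 < w$ with $u,w \in V_{s'}$, so by (ii) applied to the noncrossing partition $\pi$ all of $V_s$ lies between $u$ and $w$, i.e.\ $V_s$ is an inner block of $\pi$ --- but $j \in V_s$ with $\chi(j)=\circ$, contradicting $\pi \in INC(\chi)$. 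Thus $\sigma$ meets both conditions and $\sigma \in INC(\chi)$.

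The only step that is not pure bookkeeping is fact (ii) and its deployment in the case $s'\neq s$ above: one must promote ``some point of $A$ lies between $u$ and $w$'' to ``the entire block $V_s$ lies between $u$ and $w$'', which is exactly what is needed to conclude that $V_s$ --- not just its sub-block $A$ --- is inner in $\pi$ and thereby invoke $\pi \in INC(\chi)$. Everything else follows directly from the definitions, and by the Remark after the definition of $INC(\chi)$ no attention need be paid to the colors of $1$ and $n$.
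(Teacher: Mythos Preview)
Your proof is correct. Both directions are handled cleanly, and your fact (ii) together with the case split on $s'=s$ versus $s'\neq s$ in the reverse direction is exactly what is needed to push innerness of $A$ in $\sigma$ back either to $\sigma|_{V_s}$ or to $\pi$.

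The paper's argument is essentially the same in spirit but is organized around a slightly different (equivalent) formulation of the $INC$ condition: instead of working with inner blocks directly, it checks that whenever $n_1,n_2$ lie in a block $W$ and $n_1<l_k<n_2$ for some $l_k\in\chi^{-1}(\circ)$, then $l_k\in W$. With this phrasing the converse direction becomes a single line with no case split: from $W\subseteq V_s$ and $\pi\in INC(\chi)$ one gets $l_k\in V_s$, and then $\sigma|_{V_s}\in INC(\chi|_{V_s})$ forces $l_k\in W$. This is a bit slicker, but it leaves the noncrossingness of $\sigma$ implicit. Your version, by contrast, explicitly verifies noncrossingness of $\sigma$ (and of each $\sigma|_{V_s}$), which the paper takes for granted; in that sense your write-up is more complete, at the cost of the extra case analysis and the auxiliary fact (ii).
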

\begin{proof}
Suppose  that $\sigma\in INC(\chi)$ and $s\in\{1,\cdots, t\}$.  
Then,  all the blocks of $\sigma|_{V_s}$ are blocks of $\sigma$ since $\sigma\leq \pi$.  
Let $W\in \sigma|_{V_s}$. If $l_k\in V_s$, $n_1,n_2\in W$ and $n_1<l_k<n_2$ for some $k$,  then $l_k\in W$ because $W$ is a block of an interval noncrossing partition with respect to $\chi$. Since $W$ and $l_k$ are arbitrary, $\sigma|_{V_s}\in INC(\chi|_{V_s})$

Conversely, suppose that $\sigma|V_s\in INC(\chi|V_s)$ for all $s=1,\cdots,t$.  Let  $W\in \sigma$, $n_1,n_2\in W$ and $n_1<l_k<n_2$ for some $k$. Since $\sigma\leq \pi$, $W\subseteq V_s$ for some $s$.  Then $n_1,n_2\in V_s$ and $l_k\in V_s$, because $\pi\in INC(\chi)$.  Note that $\sigma|V_s\in INC(\chi|V_s)$ and $l_k\in \chi^{-1}(\circ)\cap V_s$. It follows that we $l_k\in W$. Since $W$ and $l_k$ are arbitrary, $\sigma\in INC(\chi)$. 

\end{proof}

We see that the interval-noncrossing partitions, that are finer than a given partition $\pi$, are uniquely determined by the INC-partitions of the restrictions of $\chi$ to the blocks of $\pi$. This proves the following result.

\begin{corollary}\label{canonical isomorphism}\normalfont 
 Let $\pi=\{V_1,\cdots,V_t\}\in INC(\chi)$.  Then,  in $INC(\chi)$,
 $$[0_n, \pi]\cong INC(\chi|_{V_1})\times\cdots\times INC(\chi|{V_s}),$$
 where  $0_n$ is the partition of $[n]$ into $n$ blocks, and $[0_n,\pi]$ denote the interval $\{\sigma\in INC(\chi): 0_n\leq \sigma\leq \pi\}.$
\end{corollary}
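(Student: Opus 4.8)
The plan is to deduce Corollary \ref{canonical isomorphism} directly from the preceding proposition together with Proposition \ref{lattice isomorphism}. First I would observe that the preceding proposition identifies the set of partitions $\sigma$ with $\sigma \leq \pi$ that lie in $INC(\chi)$: a partition $\sigma \leq \pi$ belongs to $INC(\chi)$ if and only if $\sigma|_{V_s} \in INC(\chi|_{V_s})$ for each $s = 1,\dots,t$. Since $[0_n,\pi]$ consists exactly of those $\sigma \in INC(\chi)$ with $\sigma \leq \pi$, and since any partition $\sigma \leq \pi$ is completely and freely determined by the tuple of its restrictions $(\sigma|_{V_1},\dots,\sigma|_{V_t})$ (each an arbitrary partition of the corresponding block), the assignment $\sigma \mapsto (\sigma|_{V_1},\dots,\sigma|_{V_t})$ is the candidate isomorphism.

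Next I would check the three things needed to promote this set-theoretic bijection to an isomorphism of posets. Well-definedness and surjectivity onto $INC(\chi|_{V_1})\times\cdots\times INC(\chi|_{V_t})$ are exactly the content of the preceding proposition: the forward direction says each restriction lands in the right $INC$, and the converse direction says that given any tuple of interval-noncrossing partitions of the blocks, gluing them yields an element of $[0_n,\pi]$. Injectivity is immediate since a partition refining $\pi$ is recovered from its restrictions to the blocks of $\pi$. For order-preservation in both directions, I would note that for $\sigma,\tau \leq \pi$ one has $\sigma \leq \tau$ if and only if $\sigma|_{V_s} \leq \tau|_{V_s}$ for all $s$ (the union of blocks over the $V_s$ recovers the global refinement relation), which is precisely the product order on the right-hand side.

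I do not expect a serious obstacle here; the statement is essentially a repackaging of the proposition just proved, and the only mild subtlety is making explicit that the decomposition $\sigma \mapsto (\sigma|_{V_s})_s$ is a bijection onto the \emph{full} product $\prod_s P(V_s)$ when we drop the $INC$ constraints — a standard fact about the interval $[0_n,\pi]$ in the full partition lattice — so that intersecting with the $INC$ conditions blockwise gives the claimed product. If anything, one should remark that this is the natural lattice-theoretic phenomenon already exploited in Proposition \ref{lattice isomorphism}, only now localized below an arbitrary $\pi \in INC(\chi)$ rather than below $1_n$; indeed one could alternatively derive the corollary by applying Proposition \ref{lattice isomorphism} to each $\chi|_{V_s}$ and observing compatibility of the restriction maps, but the route through the immediately preceding proposition is shorter and I would take it.
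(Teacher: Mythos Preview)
Your proposal is correct and follows essentially the same approach as the paper: the paper simply remarks that the preceding proposition shows the interval-noncrossing partitions finer than $\pi$ are uniquely determined by the $INC$-partitions of the restrictions of $\chi$ to the blocks of $\pi$, and declares the corollary proved. Your write-up makes explicit the bijection $\sigma\mapsto(\sigma|_{V_1},\dots,\sigma|_{V_t})$ and the (routine) order-preservation checks that the paper leaves implicit, but the route is the same.
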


\section{M\"obius functions on interval-noncrossing partitions} 
In free probability, the relation between the moments of variables and their cumulants are expressed by using  M\"obius functions on the lattice of noncrossing partitions. In this section,  we will develop  M\"obius functions on $INC(\chi)$. We first briefly review M\"obius functions on noncrossing partitions.

Let $L$ be a finite lattice. We denote by
$$L^{(2)}=\{(a,b)|b,a\in L,\,\, a\leq b\}$$
the set of order pairs of elements in $L$.\\
Given two functions $f,g:L^{(2)}\rightarrow \C$,  their convolution $f*g$ is given by:
$$f*g(a,b)=\sum\limits_{ \substack{c\in L\\ a\leq c\leq b}} f(a,c)g(c,b).$$
There are three special functions on $L^{(2)}$:
\begin{itemize}
\item The delta function defined as 
$$\delta(a,b)=\left\{\begin{array}{ll}
1,\,\,\,\, &\text{if}\,\, a=b,\\
0,&\text{otherwise.}
\end{array}\right.
$$
\item The \emph{zeta} function $\zeta$  defined as
$$\zeta(a,b)=\left\{\begin{array}{ll}
1,\,\,\,\, &\text{if}\,\, a\leq b,\\
0,&\text{otherwise.}
\end{array}\right.
$$
\item By proposition in \cite{Rota}, there is a  function $\mu$ on $L^{(2)}$ such that 
$$\mu*\zeta=\zeta*\mu=\delta. $$
$\mu$ is called the \emph{M\"obius function} of $L$.
\end{itemize}

Here, $\delta$ is the unit  with respect to the convolution $*$, and $ \mu$ is the inverse of $\zeta$ with respect to $*$.  Given lattice $L_1,\cdots, L_m$, then their direct product $L=L_1\times\cdots\times L_m$ is also a lattice with respect to the order such that
$(a_1,...,a_m)\leq(b_1,...,b_m)$ if and only if $a_i\leq b_i$ for all $i$. 
 It is obvious that $L^{(2)}=L_1^{(2)}\times\cdots\times L_m^{(2)}$. For each $i$, let $f_i$ be a $\C$-valued function on $L_i^{(2)}$. Then their  product $f=\prod\limits_{i=1}^mf_i$ is a function on $L^{(2)}$ defined as follows:
$$f((a_1,...,a_m),(b_1,...,b_m))= \prod\limits_{i=1}^mf_i(a_i,b_i),$$
for all  $(a_1,...,a_m),(b_1,...,b_m)\in L^{(2)}$. The M\"obius inversion functions of the lattice of non-crossing partitions are studied in \cite{Sp3}. 

\begin{lemma}\label{product of Mobius function} \normalfont  Let $L_1,\cdots,L_m$ be finite lattices. For each $i$, let $\delta_i$, $\zeta_i$, $\mu_i$ be the delta function, the zeta function and the M\"obius function of $L_i$  respectively. Then $\bar\delta=\prod\limits_{i=1}^m\delta_i$, $\bar\zeta=\prod\limits_{i=1}^m\zeta_i$ and $\bar\mu=\prod\limits_{i=1}^m\mu_i$ are  the delta function, the zeta function and the M\"obius function of $L_1\times\cdots\times L_m$.
\end{lemma}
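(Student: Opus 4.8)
The plan is to verify the three identities $\bar\delta=\prod_i\delta_i$, $\bar\zeta=\prod_i\zeta_i$, $\bar\mu=\prod_i\mu_i$ directly from the definitions, using the crucial combinatorial fact that $L^{(2)}=L_1^{(2)}\times\cdots\times L_m^{(2)}$, which is recorded right before the statement. First I would note that by induction it suffices to treat $m=2$; the general case follows by associativity of the direct product of lattices and of the product of functions. So throughout I work with $L=L_1\times L_2$ and write a typical element of $L$ as a pair $(a_1,a_2)$.

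The identity $\bar\delta=\delta_1\cdot\delta_2$ is immediate: $(a_1,a_2)=(b_1,b_2)$ in $L$ if and only if $a_1=b_1$ and $a_2=b_2$, so $\delta_{L}((a_1,a_2),(b_1,b_2))=1$ exactly when $\delta_1(a_1,b_1)=\delta_2(a_2,b_2)=1$, i.e.\ when the product is $1$; otherwise the product is $0$. The same argument, using the coordinatewise definition of the order on $L$, gives $\bar\zeta=\zeta_1\cdot\zeta_2$. These two observations are purely bookkeeping and need no more than a sentence each.

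The substantive step is to show that $\bar\mu:=\mu_1\cdot\mu_2$ is the M\"obius function of $L$, and by the uniqueness of two-sided inverses for the convolution $*$ it is enough to check $\bar\mu*\bar\zeta=\bar\delta$ (the other relation $\bar\zeta*\bar\mu=\bar\delta$ being symmetric). Here I would first establish the general multiplicativity of the convolution: for functions $f=f_1\cdot f_2$ and $g=g_1\cdot g_2$ on $L^{(2)}$ one has $f*g=(f_1*g_1)\cdot(f_2*g_2)$. This is because the sum defining $(f*g)((a_1,a_2),(b_1,b_2))$ ranges over $c=(c_1,c_2)$ with $a_i\le c_i\le b_i$, so it factors as
\[
\Big(\sum_{a_1\le c_1\le b_1} f_1(a_1,c_1)g_1(c_1,b_1)\Big)\Big(\sum_{a_2\le c_2\le b_2} f_2(a_2,c_2)g_2(c_2,b_2)\Big),
\]
which is exactly $(f_1*g_1)(a_1,b_1)\cdot(f_2*g_2)(a_2,b_2)$; here I used the distributive law to split the double sum into a product of sums, which is legitimate because each $L_i^{(2)}$ is finite. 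Applying this with $f_i=\mu_i$, $g_i=\zeta_i$ gives $\bar\mu*\bar\zeta=(\mu_1*\zeta_1)\cdot(\mu_2*\zeta_2)=\delta_1\cdot\delta_2=\bar\delta$, using the defining property of each $\mu_i$ together with the first identity already proved. Symmetrically $\bar\zeta*\bar\mu=\bar\delta$, so $\bar\mu$ is a two-sided $*$-inverse of $\bar\zeta$; since the M\"obius function of $L$ is by definition the unique such inverse, $\bar\mu$ equals it, completing the proof.

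The only point requiring any care — and thus the main ``obstacle'', though it is a mild one — is making the factorization of the convolution sum rigorous: one must be sure that the index set $\{c\in L: a\le c\le b\}$ really is the full rectangle $\{c_1:a_1\le c_1\le b_1\}\times\{c_2:a_2\le c_2\le b_2\}$, which is precisely the coordinatewise definition of $\le$ on the direct product, and that finiteness permits rearranging the sum into a product. Everything else is a direct unwinding of definitions.
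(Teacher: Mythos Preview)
Your proof is correct and follows essentially the same approach as the paper's: both verify that $\bar\delta$ and $\bar\zeta$ are the delta and zeta functions of the product by inspection, and both prove $\bar\mu*\bar\zeta=\bar\delta$ by factoring the sum over intermediate elements $c=(c_i)_i$ into a product of sums over the coordinates. The only differences are organizational: you reduce to $m=2$ by induction and isolate the factorization as a general ``convolution is multiplicative'' observation, whereas the paper carries out the same computation directly for arbitrary $m$ without stating the intermediate lemma.
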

\begin{proof}
It is obvious that $\bar\delta$ and $\bar\zeta$ are the delta function and zeta function of $L_1\times\cdots\times L_m$. We verify that $\bar\mu$ is  the corresponding M\"obius function.  Let  $a_i,b_i\in L_i$ and $a_i\leq b_i$.  We have 
$$
\begin{array}{rcl}
\bar\mu*\bar\zeta((a_i)_i,(b_i)_i)&=&\sum\limits_{\substack{(c_i)_i\in L_1\times\cdots\times L_m\\(a_i)_i\leq(c_i)_i\leq(b_i)_i}}\bar\mu((a_i)_i,(c_i)_i) \bar\zeta((c_i)_i,(b_i)_i)\\
&=&\prod\limits_{i=1}^m\sum\limits_{\substack{c_i\in L_i\\a_i\leq c_i\leq b_i}}\bar\mu_i(a_i,c_i) \bar\zeta(c_i,b_i)\\
&=&\prod\limits_{i=1}^m\sum\limits_{\substack{c_i\in L_i\\a_i\leq c_i\leq b_i}}\delta_i(a_i,b_i)\\
&=&\bar\delta((a_i)_i,(b_i)_i),\\
\end{array}
 $$
that is, $ \bar{\mu}*\bar{\zeta}=\bar\delta.$
\end{proof}

The following result follows in  immediately from Lemma \ref{product of Mobius function}.

\begin{proposition}\normalfont  Let $\alpha:N(\chi)\rightarrow NC([1,l_1])\times NC([l_1,l_2])\times\cdots \times NC([l_{m-1},n])$ be the poset isomorphism in Proposition \ref{lattice isomorphism}. 
Let $\bar\delta$, $\bar\zeta$, $\bar\mu$ be the delta function, the zeta function and the M\"obius function of $NC([1,l_1])\times NC([l_1,l_2])\times\cdots \times NC([l_{m-1},n])$.  Then $\delta_{INC(\chi)}=\bar\delta\circ\alpha$, $\zeta_{INC(\chi)}=\bar\zeta\circ\alpha$, $\mu_{INC(\chi)}=\bar\mu\circ\alpha$ are the delta function, the zeta function and the M\"obius function on ${INC(\chi)}$.
\end{proposition}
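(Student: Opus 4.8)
The plan is to transport the three distinguished functions along the poset isomorphism $\alpha$ of Proposition \ref{lattice isomorphism}. The key observation is purely formal: whenever $\alpha:L\to M$ is an order isomorphism of finite lattices, pulling back a function $f$ on $M^{(2)}$ to $f\circ\alpha$ on $L^{(2)}$ (where I abuse notation and write $\alpha$ also for the induced bijection $L^{(2)}\to M^{(2)}$, $(a,b)\mapsto(\alpha(a),\alpha(b))$) is a $*$-algebra isomorphism of the incidence algebras. So first I would record that $\alpha$ restricts to a bijection $L^{(2)}\to M^{(2)}$ because $a\leq b\iff\alpha(a)\leq\alpha(b)$ (both $\alpha$ and $\alpha^{-1}$ are order-preserving, by Proposition \ref{lattice isomorphism} together with the preceding lemma). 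This uses only that $INC(\chi)$ and the product of $NC$-lattices are isomorphic as posets; Lemma \ref{product of Mobius function} is not even needed for this part, it is only needed to identify what $\bar\mu$ concretely is.

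Next I would check the multiplicativity of pullback under convolution. For $f,g:M^{(2)}\to\C$ and $(a,b)\in L^{(2)}$,
$$
(f\circ\alpha)*(g\circ\alpha)(a,b)=\sum_{a\leq c\leq b}f(\alpha(a),\alpha(c))\,g(\alpha(c),\alpha(b))=\sum_{\alpha(a)\leq d\leq \alpha(b)}f(\alpha(a),d)\,g(d,\alpha(b))=(f*g)(\alpha(a),\alpha(b)),
$$
where in the middle step I reindex the sum by $d=\alpha(c)$, which is legitimate precisely because $c\mapsto\alpha(c)$ is an order-preserving bijection from the interval $[a,b]_L$ onto $[\alpha(a),\alpha(b)]_M$. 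Hence $(f*g)\circ\alpha=(f\circ\alpha)*(g\circ\alpha)$.

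With these two facts in hand the proposition is immediate. Pullback clearly sends the delta function of $M$ to the delta function of $L$ and the zeta function of $M$ to the zeta function of $L$, since $a=b\iff\alpha(a)=\alpha(b)$ and $a\leq b\iff\alpha(a)\leq\alpha(b)$; thus $\delta_{INC(\chi)}=\bar\delta\circ\alpha$ and $\zeta_{INC(\chi)}=\bar\zeta\circ\alpha$. Applying the convolution identity to $\bar\mu*\bar\zeta=\bar\zeta*\bar\mu=\bar\delta$ gives
$$
(\bar\mu\circ\alpha)*(\bar\zeta\circ\alpha)=(\bar\mu*\bar\zeta)\circ\alpha=\bar\delta\circ\alpha=\delta_{INC(\chi)},
$$
and symmetrically for the other order; since the M\"obius function of $INC(\chi)$ is by definition the unique two-sided inverse of $\zeta_{INC(\chi)}=\bar\zeta\circ\alpha$ under $*$, we conclude $\mu_{INC(\chi)}=\bar\mu\circ\alpha$. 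There is no real obstacle here; the only point requiring a moment's care is the reindexing $d=\alpha(c)$ in the convolution computation, i.e. verifying that $\alpha$ maps the interval $[a,b]$ in $INC(\chi)$ bijectively onto the corresponding interval in the product lattice, which is exactly what "$\alpha$ and $\alpha^{-1}$ are order-preserving" gives us.
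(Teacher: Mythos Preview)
Your argument is correct and is essentially the implicit reasoning the paper has in mind: the paper simply asserts that the proposition ``follows immediately'' (citing Lemma~\ref{product of Mobius function}), whereas you spell out the transport-of-structure argument via the incidence algebra isomorphism induced by the poset isomorphism $\alpha$. Your remark that Lemma~\ref{product of Mobius function} is not actually needed for this proposition itself---only later, to write $\bar\mu$ as a product of Möbius functions on the factors---is a valid sharpening of the paper's attribution.
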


When there is no confusion, we simply write $\mu$ for the M\"obius inversion functions of $NC(n)$ for arbitrary $n$.  Let $(\sigma,\pi)\in INC(\chi)^{(2)}$. Let $\alpha(\sigma)=(\sigma_1,\cdots,\sigma_m),\alpha(\pi)=(\pi_1,\cdots,\pi_m)\in NC([1,l_1])\times NC([l_1,l_2])\times\cdots \times NC([l_{m-1},n])$.  Then, we have 
$$\mu_{INC(\chi)}(\sigma,\pi)=\prod\limits_{i=1}^m \mu(\sigma_i,\pi_i).$$
For convenience, we let $\mu({\emptyset},1_{\emptyset})=1$. Given a partition $\pi\in INC(\chi)$ and a blcok $V\in \pi$,  we set  $\tilde{\alpha}_i(V)=V\cap [l_{i-1},l_i]$, $i=1,\cdots, m$.  Since the M\"obius functions on $INC(\chi)$ depend only on $\chi$,  we  denote by $\mu_{INC}$ the M\"obius function of the lattice of interval-noncrossing partitions.

\begin{lemma}\label{Mobius transform1}\normalfont  Let $\pi=\{V_1,\cdots,V_t\}\in INC(\chi)$ and $\sigma\in INC(\chi)$ such that $\sigma\leq \pi$. Then,
$$\mu_{INC}(\sigma|_{V_s},1_{V_s})=\prod\limits_{i=1}^m \mu(\sigma_i|\talpha_i(V_s),1_{\talpha_i(V_s)}),$$
where $1_{\talpha_i(V_s)}$ is the partition of $\talpha_i(V_s)$ into one block.
\end{lemma}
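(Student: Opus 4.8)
The plan is to reduce the identity to Proposition~\ref{lattice isomorphism}, applied not to $\chi$ but to the restricted coloring $\chi|_{V_s}$, together with the multiplicativity of M\"obius functions over direct products (Lemma~\ref{product of Mobius function}).

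First, by Corollary~\ref{canonical isomorphism} the symbol $\mu_{INC}(\sigma|_{V_s},1_{V_s})$ denotes the M\"obius function of the lattice $INC(\chi|_{V_s})$; that $\sigma|_{V_s}$ lies in this lattice follows from $\sigma\in INC(\chi)$ via the Proposition preceding Corollary~\ref{canonical isomorphism}. Applying Proposition~\ref{lattice isomorphism} to the ordered set $V_s$ with the coloring $\chi|_{V_s}$ (the colors of $\min V_s$ and $\max V_s$ being irrelevant by the Remark following the definition of $INC(\chi)$), one obtains a poset isomorphism $INC(\chi|_{V_s})\cong NC(P_1)\times\cdots\times NC(P_q)$, where $P_1,\dots,P_q$ are the intervals into which $V_s$ is cut by its interior $\circ$-colored points, under which a partition $\tau$ goes to $(\tau|_{P_1},\dots,\tau|_{P_q})$ and $1_{V_s}$ to $(1_{P_1},\dots,1_{P_q})$.

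The combinatorial core of the proof is to identify the pieces $P_j$ with the sets $\talpha_i(V_s)=V_s\cap[l_{i-1},l_i]$ having at least two elements. Here one uses that $\pi=\{V_1,\dots,V_t\}$ is interval-noncrossing: if some $l_i$ satisfied $\min V_s<l_i<\max V_s$ but $l_i\notin V_s$, then by noncrossingness the $\pi$-block $W\neq V_s$ containing $l_i$ would lie entirely inside the open interval $(\min V_s,\max V_s)$, making $W$ an inner block, which is forbidden since $l_i\in\chi^{-1}(\circ)$. Hence the $\circ$-colored points of $V_s$ lying strictly inside its span form a run of consecutive $l_i$'s, so each piece $P_j$ equals $V_s\cap[l_{i-1},l_i]=\talpha_i(V_s)$ for a suitable $i$ (including the two boundary pieces, once one notes that $V_s$ has no elements below $\min V_s$ or above $\max V_s$), while the remaining $\talpha_i(V_s)$ are empty or singletons.

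Finally, under the isomorphism above $\sigma|_{V_s}$ corresponds to $\big((\sigma|_{V_s})|_{\talpha_i(V_s)}\big)_i$; since $\talpha_i(V_s)\subseteq[l_{i-1},l_i]$ and $\sigma_i=\alpha_i(\sigma)$ is the restriction of $\sigma$ to $[l_{i-1},l_i]$, we have $(\sigma|_{V_s})|_{\talpha_i(V_s)}=\sigma|_{\talpha_i(V_s)}=\sigma_i|_{\talpha_i(V_s)}$. Lemma~\ref{product of Mobius function} then gives $\mu_{INC}(\sigma|_{V_s},1_{V_s})=\prod_{i:\,|\talpha_i(V_s)|\geq 2}\mu(\sigma_i|_{\talpha_i(V_s)},1_{\talpha_i(V_s)})$, and this product extends to one over all $i=1,\dots,m$ at no cost, using the convention $\mu(\emptyset,1_\emptyset)=1$ and the triviality of the M\"obius function of $NC$ of a one-point set. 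The step I expect to be the main obstacle is the identification in the third paragraph: carefully matching the $\circ$-point decomposition of $\chi|_{V_s}$ with the global intervals $[l_{i-1},l_i]$, handling the boundary pieces of $V_s$, and keeping track of which $\talpha_i(V_s)$ are empty or singletons.
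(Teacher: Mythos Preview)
Your proof is correct and follows essentially the same approach as the paper: apply the lattice isomorphism of Proposition~\ref{lattice isomorphism} to $INC(\chi|_{V_s})$, identify the resulting noncrossing pieces with the nonempty $\talpha_i(V_s)$, and invoke the multiplicativity of the M\"obius function. Your argument for why every $l_i$ with $\min V_s<l_i<\max V_s$ must lie in $V_s$ (via the inner-block contradiction) is in fact more explicit than the paper's, which simply asserts this and writes out $V_s$ accordingly; otherwise the two proofs are the same.
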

\begin{proof}
Let $k_1=\min\{k|[l_k,l_{k+1}]\cap V_s\neq \emptyset\}$ and $k_2=\max\{k|[l_{k-1},l_{k}]\cap V_s\neq \emptyset\}$.  
Then, $l_k\in V_s$ for all $k_1<k<k_2$.  $V_s$ can be written as follows:
 $$V_s=\{n_1,n_2,\cdots,n_{i_1},l_{k_1},n_{i_1+1},\cdots,n_{i_2},l_{k_1+1},\cdots.,l_{k_2},n_{i_{k_2-k_1+1}+1},\cdots\}.$$
Therefore, $\chi^{-1}|_{V_s}(\circ)=\{l_k|k_1<k<k_2\}$.  Let $$\begin{array}{lcl}
W_1&=&\{n_1,n_2,\cdots,n_{i_1},l_{k_1}\}\\
W_2&=&\{l_{k_1},n_{i_1+1},\cdots,n_{i_2},l_{k_1+1}\}\\
&\cdots&\\
W_{k_2-k_1+2}&=&\{l_{k_2},n_{i_{k_2-k_1+1}+1},\cdots\},
\end{array}
$$
 be intervals of $V_s$ with respect to $\chi|_{V_s}$.  Then,  $W_j=\talpha_{k_1-2+j}(V_s)$ the restriction of $V_s$ to $[l_{k_1-2+j},l_{k_1-1+j}]$ for $j=1,\cdots,k_2-k_1+2$.

Since $\sigma\leq \pi$,  we have $\sigma|_{V_s}\in INC(\chi|_{V_s})$.  Notice that $W_j\subseteq V_s$,  the restriction of $\sigma|_{V_s}$ to $W_j$ is $\sigma|_{W_j}$.
Then,
$$\mu_{INC}(\sigma|_{V_s},1_{V_s})=\prod\limits_{j=1}^{k_2-k_1+2} \mu(\sigma|_{W_j},1_{W_j})=\prod\limits_{j=1}^{k_2-k_1+2} \mu(\sigma|_{W_j},1_{\talpha_{k_1-1+j}(V_s)}).$$

Since  $W_j\subseteq [l_{k_1-2+j},l_{k_1-1+j}]$, the restriction of $\alpha_{k_1-1+j}(\sigma)=\sigma|_{[l_{k_1-2+j},l_{k_1-1+j}]}$ to $ W_j$ is just $\sigma|_{W_j}$. In other words,  $ \sigma|_{W_j}=\alpha_{k_1-1+j}(\sigma)|_{W_j}=\alpha_{k_1-1+j}(\sigma)|_{\talpha_{k_1-1+j}(V_s)}$. Therefore,
$$
\begin{array}{rcl}
\mu_{INC}(\sigma|_{V_s},1_{V_s})&=&\prod\limits_{j=1}^{k_2-k_1+2} \mu(\sigma|_{W_j},1_{\talpha_{k_1-1+j}(V_s)})\\
&=&\prod\limits_{j=1}^{k_2-k_1+2} \mu(\alpha_{k_1-1+j}(\sigma)|_{\talpha_{k_1-1+j}(V_s)},1_{\talpha_{k_1-1+j}(V_s)})\\
&=&\prod\limits_{i=k_1}^{k_2+1} \mu(\sigma_i|\talpha_i(V_s),1_{\talpha_i(V_s)}).
\end{array}
$$
 For $i<k_1$ and $k>k_2+1$,  $\mu(\sigma_i|\talpha_i(V_s),1_{\talpha_i(V_s)})=1$ because $\talpha_i(V_s)=\emptyset$.  The lemma follows.
 \end{proof}

\begin{lemma}\label{Mobius transform}\normalfont  Let $\pi=\{V_1,\cdots,V_t\}\in INC(\chi)$ and $\sigma\in INC(\chi)$ such that $\sigma\leq \pi$. Then,
$$\mu_{INC}(\sigma,\pi)=\prod\limits_{s=1}^t\prod\limits_{i=1}^m \mu(\sigma_i|\talpha_i(V_s),1_{\talpha_i(V_s)}).$$
\end{lemma}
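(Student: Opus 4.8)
The plan is to reduce $\mu_{INC}(\sigma,\pi)$ to a product over the blocks of $\pi$, and then to apply Lemma \ref{Mobius transform1} blockwise. First I would use the elementary fact (immediate from the recursion $\mu*\zeta=\delta$, and standard since \cite{Rota}) that the value $\mu_L(a,b)$ of the M\"obius function of a finite lattice $L$ depends only on the interval $[a,b]$ regarded as a lattice in its own right; in particular $\mu_{INC}(\sigma,\pi)$ equals the M\"obius function of the sublattice $[0_n,\pi]$ of $INC(\chi)$ evaluated at $(\sigma,\pi)$.

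Next I would invoke Corollary \ref{canonical isomorphism}, which gives a poset --- hence lattice --- isomorphism
$$[0_n,\pi]\cong INC(\chi|_{V_1})\times\cdots\times INC(\chi|_{V_t}),$$
realized by sending a partition $\tau\leq\pi$ to the tuple of restrictions $(\tau|_{V_1},\ldots,\tau|_{V_t})$. Under this isomorphism $\pi$ is carried to the top element $(1_{V_1},\ldots,1_{V_t})$ and $\sigma$ to $(\sigma|_{V_1},\ldots,\sigma|_{V_t})$, so the interval $[\sigma,\pi]$ corresponds to the product of intervals $\prod_{s=1}^t[\sigma|_{V_s},1_{V_s}]$. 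Combining this with Lemma \ref{product of Mobius function}, which identifies the M\"obius function of a direct product of lattices with the product of the factor M\"obius functions, I get
$$\mu_{INC}(\sigma,\pi)=\prod_{s=1}^t\mu_{INC}(\sigma|_{V_s},1_{V_s}).$$

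Finally I would apply Lemma \ref{Mobius transform1} to each factor, namely $\mu_{INC}(\sigma|_{V_s},1_{V_s})=\prod_{i=1}^m\mu(\sigma_i|\talpha_i(V_s),1_{\talpha_i(V_s)})$, substitute, and interchange the two finite products to obtain the asserted formula. The only step requiring genuine care is the middle one: one must check that the isomorphism furnished by Corollary \ref{canonical isomorphism} is truly induced by the restriction maps and sends $\sigma$ and $\pi$ to the stated tuples, so that $[\sigma,\pi]$ decomposes as a product of intervals in the factor lattices. Once this bookkeeping is settled the M\"obius computation is purely formal, all the partition-theoretic content having already been absorbed into the structural results of Section 5 and into Lemma \ref{Mobius transform1}.
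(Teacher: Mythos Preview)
Your proof is correct, but it follows a different route from the paper's. The paper first decomposes over the $m$ intervals $[l_{i-1},l_i]$ determined by $\chi$: starting from $\mu_{INC}(\sigma,\pi)=\prod_{i=1}^m\mu(\sigma_i,\pi_i)$ (Proposition~\ref{lattice isomorphism} together with Lemma~\ref{product of Mobius function}), it then invokes the external canonical factorization of intervals in $NC$ (Theorem~9.2 of \cite{NS}) to split each $\mu(\sigma_i,\pi_i)$ over the blocks $\talpha_i(V_s)$ of $\pi_i$. You proceed in the opposite order---blocks of $\pi$ first, intervals of $\chi$ second---using Corollary~\ref{canonical isomorphism} and Lemma~\ref{product of Mobius function} to obtain $\mu_{INC}(\sigma,\pi)=\prod_{s=1}^t\mu_{INC}(\sigma|_{V_s},1_{V_s})$ directly, and then applying Lemma~\ref{Mobius transform1} blockwise.

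Your approach has two advantages: it stays entirely inside the paper (no appeal to \cite{NS}), and it yields Proposition~\ref{Mobius transform2} as an intermediate step rather than as a corollary of Lemmas~\ref{Mobius transform1} and~\ref{Mobius transform} combined. One small remark: after substituting Lemma~\ref{Mobius transform1} you already have $\prod_{s=1}^t\prod_{i=1}^m(\cdots)$, which is exactly the asserted order of the double product, so no interchange is needed. The one point you flag as requiring care---that the isomorphism of Corollary~\ref{canonical isomorphism} is induced by the restriction maps $\tau\mapsto(\tau|_{V_s})_s$ and carries $(\sigma,\pi)$ to $((\sigma|_{V_s})_s,(1_{V_s})_s)$---is indeed routine, following from the proposition immediately preceding that corollary.
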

\begin{proof}
Notice that all blocks of $\pi_i$ come from the restriction of blocks of $\pi$ to the interval  $[l_{i-1},l_i]$ and   $\pi_i$, $\sigma_i$  are noncrossing partitions on $[l_{i-1},l_i]$. By Theorem 9.2 in \cite{NS},  the interval $[\sigma_i,\pi_i]$ of noncrossing partitions has the following  canonical factorization:
     $$[\sigma_i,\pi_i]\cong\prod\limits_{s=1}^t[\sigma_i|\talpha_i(V_s),1_{\talpha_i(V_s)}]$$
where $\sigma_i|\talpha_i(V_s)$ is the restriction of the partition $\sigma_i$ to the block $\talpha_i(V_s)$ and we allow the empty set in the above formula.    Then, by the multiplicative property of the M\"obius inversion on noncrossing partition intervals, we have 
$$ \mu(\sigma_i,\pi_i)=\prod\limits_{s=1}^{t} \mu(\sigma_i|\talpha_i(V_s),1_{\talpha_i(V_s)}),$$
and thus
$$
\begin{array}{rcl}
\mu_{INC(\chi)}(\sigma,\pi)&=&\prod\limits_{i=1}^m\prod\limits_{s=1}^{t} \mu(\sigma_i|\talpha_i(V_s),1_{\talpha_i(V_s)})\\
\end{array}
$$ 
\end{proof}
By  Lemma \ref{Mobius transform1} and \ref{Mobius transform}, we have the follow proposition.
\begin{proposition}\label{Mobius transform2} \normalfont
Let $\pi=\{V_1,\cdots,V_t\}\in INC(\chi)$ and $\sigma\in INC(\chi)$ such that $\sigma\leq \pi$. Then,
$$\mu_{INC}(\sigma,\pi)=\prod\limits_{s=1}^t\mu_{INC}(\sigma|_{V_s},1_{V_s}).$$
\end{proposition}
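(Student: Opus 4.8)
The plan is to combine the two preceding lemmas, reorganizing the order of the two products. By Lemma \ref{Mobius transform}, we already have
$$\mu_{INC}(\sigma,\pi)=\prod\limits_{s=1}^t\prod\limits_{i=1}^m \mu(\sigma_i|\talpha_i(V_s),1_{\talpha_i(V_s)}).$$
Since this is a finite product of complex numbers, I may freely interchange the order of the two products, so that
$$\mu_{INC}(\sigma,\pi)=\prod\limits_{s=1}^t\Big(\prod\limits_{i=1}^m \mu(\sigma_i|\talpha_i(V_s),1_{\talpha_i(V_s)})\Big).$$
Now Lemma \ref{Mobius transform1} identifies precisely the inner factor: for each fixed $s$,
$$\prod\limits_{i=1}^m \mu(\sigma_i|\talpha_i(V_s),1_{\talpha_i(V_s)})=\mu_{INC}(\sigma|_{V_s},1_{V_s}).$$
Substituting this into the previous display yields $\mu_{INC}(\sigma,\pi)=\prod_{s=1}^t\mu_{INC}(\sigma|_{V_s},1_{V_s})$, which is the claim.

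The only point that requires a small amount of care is the hypothesis check for applying Lemma \ref{Mobius transform1}: that lemma requires $\pi\in INC(\chi)$, $\sigma\in INC(\chi)$, and $\sigma\leq\pi$, which are exactly the hypotheses we are given here, so the application is immediate for each block $V_s$ of $\pi$. (Implicitly one also uses that $\sigma|_{V_s}\in INC(\chi|_{V_s})$, which was established in the Proposition preceding Corollary \ref{canonical isomorphism}, so that $\mu_{INC}(\sigma|_{V_s},1_{V_s})$ is meaningful; and the convention $\mu(\emptyset,1_\emptyset)=1$ handles blocks $V_s$ that miss some interval $[l_{i-1},l_i]$.) I do not expect any genuine obstacle: the content has already been extracted in Lemmas \ref{Mobius transform1} and \ref{Mobius transform}, and this proposition is simply the bookkeeping step that records the product formula in the intrinsic form indexed by the blocks of $\pi$ rather than by the pairs $(s,i)$. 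The proof is therefore a two-line citation of the two lemmas together with the commutativity of finite products in $\C$.
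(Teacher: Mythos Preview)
Your proposal is correct and matches the paper's own approach exactly: the paper's proof consists of the single sentence ``By Lemma \ref{Mobius transform1} and \ref{Mobius transform}, we have the following proposition,'' and your write-up is precisely the unpacking of that sentence.
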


\section{ Vanishing cumulants condition for free-Boolean independence}
In this section, we introduce the notion of free-Boolean cumulants and  show that the  vanishing of joint free-Boolean cumulants is equivalent to free-Boolean independence.
\subsection{Free-Boolean cumulants}
Let $(\A,\phi)$ be a noncommutative probability space. For $n\in \mathbb{N}$, let $\phi^{(n)}$ be the \emph{$n$-linear} map from $\underbrace{\A\otimes\cdots\otimes \A}_{\text{n times}}$ to $\C$  defined as
$$\phi^{(n)}(z_1,\cdots,z_n)=\phi(z_1\cdots z_n),$$
where $z_1,...,z_n\in \A.$\\
Then, for each partition $\pi\in P(n)$,  we  define an $n$-linear map $\phi_{\pi}: \underbrace{\A\otimes\cdots\otimes \A}_{\text{n times}}\rightarrow \C$ recursively as follows:
$$\phi_\pi(z_1,\cdots,z_n)=\phi_{\pi\setminus V}(z_1,\cdots,z_l,z_{l+s+1},\cdots,z_n)\phi^{(s)}(z_{l+1},\cdots,z_{l+s}),$$ 
where $V=(l+1,l+2,\cdots,l+s)$ is an interval block of $\pi$.
\begin{definition}  \normalfont Given $\chi$ and $\pi\in INC(\chi)$, the \emph{free-Boolean cumulant} $\kappa_{\chi,\pi}$ is an $n$-linear map defined as follows:
$$\kappa_{\chi,\pi}(z_1,\cdots,z_n)
=\sum\limits_{\substack{\sigma\leq \pi\\ \sigma\in INC(\chi)}}\mu_{INC}(\sigma,\pi)\phi_{\sigma}(z_1,\cdots,z_n) .$$ 
\end{definition}

For example,  let $n=8$ and $\pi=\{\{1,5,8\},\{2,3,4\},\{6,7\}\}$. Then,
$$\phi_{\pi}(z_1,\cdots,z_8)=\phi_{\{\{1,5,8\},\{6,7\}\}}(z_1,z_5,z_6,z_7,z_8)\phi(z_2z_3z_4)=\phi(z_1z_5z_8)\phi(z_6z_7)\phi(z_2z_3z_4).$$

We show that free-Boolean cumulants have a multiplicative property.

\begin{theorem}\label{multiplicative property} \normalfont  Let $\pi=\{V_1,\cdots,V_t\}\in INC(\chi)$ and $z_1,\cdots,z_n$ be noncommutative random variables in a noncommutative probability space $(\A,\phi)$. Then
$$\kappa_{\chi,\pi}(z_1,\cdots,z_n)=\prod\limits_{s=1}^t \kappa_{\chi|_{V_s},1_{V_s}}(z_1,\cdots,z_n). $$
\end{theorem}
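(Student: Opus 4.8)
The plan is to reduce the multiplicative property of free-Boolean cumulants to the multiplicative property of the M\"obius function established in Proposition \ref{Mobius transform2}, together with the factorization of the interval $[0_n,\pi]$ in $INC(\chi)$ provided by Corollary \ref{canonical isomorphism}. First I would expand the definition of $\kappa_{\chi,\pi}$ as the sum
$$\kappa_{\chi,\pi}(z_1,\cdots,z_n)=\sum\limits_{\substack{\sigma\leq\pi\\\sigma\in INC(\chi)}}\mu_{INC}(\sigma,\pi)\,\phi_\sigma(z_1,\cdots,z_n),$$
and observe that, by Corollary \ref{canonical isomorphism}, a partition $\sigma\in INC(\chi)$ with $\sigma\leq\pi$ is the same datum as a tuple $(\sigma|_{V_1},\cdots,\sigma|_{V_t})$ with $\sigma|_{V_s}\in INC(\chi|_{V_s})$; so the single sum over $\sigma$ becomes a product of $t$ independent sums over $\sigma|_{V_s}\leq 1_{V_s}$.

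Next I would check that the two $n$-linear functions appearing in the definition factor compatibly with the block decomposition of $\pi$. For the M\"obius factor this is exactly Proposition \ref{Mobius transform2}: $\mu_{INC}(\sigma,\pi)=\prod_{s=1}^t\mu_{INC}(\sigma|_{V_s},1_{V_s})$. For $\phi_\sigma$ I would argue that when $\sigma\leq\pi$ with $\pi$ interval-noncrossing, every block of $\sigma$ lies inside a single block $V_s$ of $\pi$, and the recursive removal of interval blocks in the definition of $\phi_\sigma$ can be organized block of $\pi$ by block of $\pi$; since $\pi$ itself is noncrossing, each $V_s$, viewed inside its own ground set, admits the same interval-block peeling, giving $\phi_\sigma(z_1,\cdots,z_n)=\prod_{s=1}^t\phi_{\sigma|_{V_s}}(z_1,\cdots,z_n)$ where on the right each factor uses only the variables indexed by $V_s$. (One should note that $\phi_{1_{V_s}}$ is just $\phi$ applied to the ordered product of the $z_i$ with $i\in V_s$, and that $\phi_{\sigma|_{V_s}}$ refines this in the usual way.) Combining, the summand factors completely, and interchanging the product over $s$ with the sums over the $\sigma|_{V_s}$ yields
$$\kappa_{\chi,\pi}(z_1,\cdots,z_n)=\prod\limits_{s=1}^t\ \sum\limits_{\substack{\tau\leq 1_{V_s}\\\tau\in INC(\chi|_{V_s})}}\mu_{INC}(\tau,1_{V_s})\,\phi_\tau(z_1,\cdots,z_n)=\prod\limits_{s=1}^t\kappa_{\chi|_{V_s},1_{V_s}}(z_1,\cdots,z_n).$$

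The main obstacle I expect is the bookkeeping in the $\phi_\sigma$ factorization: the recursive definition of $\phi_\sigma$ strips off an \emph{interval} block at each stage, and one must be careful that, for $\sigma\leq\pi\in INC(\chi)$, one can always find an order of stripping that respects the partition $\pi$ into the $V_s$ — i.e.\ that the interval blocks of $\sigma$ available for removal are always interval blocks of some $\sigma|_{V_s}$ relative to the residual ground set. This is where noncrossingness of $\pi$ (hence of the $V_s$) is essential, and it is the analogue of the classical fact that $\phi_\sigma$ is multiplicative over the blocks of any $\pi\geq\sigma$ in $NC(n)$; I would either cite the corresponding statement for noncrossing partitions or spell out the short induction on $t$. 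Everything else — the reindexing of the sum via Corollary \ref{canonical isomorphism}, and pulling the product outside the sum — is routine once the two factorization lemmas are in hand.
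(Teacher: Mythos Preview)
Your proposal is correct and follows essentially the same approach as the paper: factor $\mu_{INC}(\sigma,\pi)$ and $\phi_\sigma$ over the blocks of $\pi$, then use Corollary \ref{canonical isomorphism} to split the sum over $\sigma\leq\pi$ into independent sums over the $\sigma|_{V_s}$. The only cosmetic difference is that the paper re-derives the M\"obius factorization inside the proof by passing through the double product $\prod_i\prod_s\mu(\sigma_i|\talpha_i(V_s),1_{\talpha_i(V_s)})$ via Lemmas \ref{Mobius transform1} and \ref{Mobius transform}, whereas you invoke Proposition \ref{Mobius transform2} directly --- your route is slightly more economical; the paper also takes the $\phi_\sigma$ factorization for granted without the discussion you give, so your caution there is not misplaced but also not strictly needed for this audience.
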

\begin{proof} By Lemma \ref{Mobius transform}, we have 
$$
\begin{array}{rcl}
&&\kappa_{\chi,\pi}(z_1,\cdots,z_n)\\
&=&\sum\limits_{\substack{\sigma\leq \pi\\ \sigma\in INC(\chi)}}\mu_{INC}(\sigma,\pi)\phi_{\sigma}(z_1,\cdots,z_n)\\
&=&\sum\limits_{\substack{\sigma\leq \pi\\ \sigma\in INC(\chi)}}[\prod\limits_{i=1}^m\prod\limits_{s=1}^{t} \mu(\sigma_i|\talpha_i({V_s}),1_{\talpha_i(V_s)})][\prod\limits_{s=1}^{t}\phi_{\sigma|_{V_s}}(z_1,\cdots,z_n)]\\
&=&\sum\limits_{\substack{\sigma\leq \pi\\ \sigma\in INC(\chi)}}\prod\limits_{s=1}^t[\prod\limits_{i=1}^{m} \mu(\sigma_i|\talpha_i(V_s),1_{\talpha_i(V_s)})\prod\limits_{s=1}^{t}\phi_{\sigma|_{V_s}}(z_1,\cdots,z_n)]\\
&=&\sum\limits_{\substack{\sigma\leq \pi\\ \sigma\in INC(\chi)}}\prod\limits_{s=1}^t[\prod\limits_{i=1}^{m} \mu(\sigma_i|\talpha_i(V_s),1_{\talpha_i(V_s)})\phi_{\sigma|_{V_s}}(z_1,\cdots,z_n)]\\
&=&\sum\limits_{\substack{\sigma\leq \pi\\ \sigma\in INC(\chi)}}\prod\limits_{s=1}^t[\mu_{INC}(\sigma|_{V_s},1_{V_s})\phi_{\sigma|_{V_s}}(z_1,\cdots,z_n)],\\
\end{array}
$$
where the last equality follows Lemma \ref{Mobius transform}. By corollary \ref{canonical isomorphism}, we have that 
$$
\begin{array}{rcl}
&&\sum\limits_{\substack{\sigma\leq \pi\\ \sigma\in INC(\chi)}}\prod\limits_{s=1}^t[\mu_{INC}(\sigma|_{V_s},1_{V_s})\phi_{\sigma|_{V_s}}(z_1,\cdots,z_n)]\\
&=&\prod\limits_{s=1}^t[\sum\limits_{\substack{ \sigma|_{V_s}\in INC(\chi|_{V_s})}}\mu_{INC}(\sigma|_{V_s},1_{V_s})\phi_{\sigma|_{V_s}}(z_1,\cdots,z_n)]\\
&=&\prod\limits_{s=1}^t \kappa_{\chi|_{V_s},1_{V_s}}(z_1,\cdots,z_n),
\end{array}
$$
thus the proof is complete.
\end{proof}

\begin{definition} \normalfont 
Let $\{(\mathcal{A}_{i,\ell}, \mathcal{A}_{i,r})\}_{i \in I}$ be a family of pairs of subalgebras of $\A$ in a non-commutative probability space $(\mathcal{A}, \varphi)$. We say that $\{(\mathcal{A}_{i,\ell}, \mathcal{A}_{i,r})\}_{i \in I}$ are \emph{combinatorially free-Boolean independent} if 
$$\kappa_{\chi,1_n}(z_1,\cdots, z_n)=0 $$
whenever  $\beta : \{1,\cdots,n\}\to \{\ell, r\}$, $\beta^{-1}(\ell)=\chi^{-1}(\bullet)$, $\omega : \{1,\cdots,n\}\to I$,  $z_k\in\A_{\omega(k),\beta(k)}$ and 
$\omega$ is not a constant.
\end{definition}

Notice that the condition $\beta^{-1}(\ell)=\chi^{-1}(\bullet)$ completely determines $\beta$.  In the follow context, we will always assume that $\beta^{-1}(\ell)=\chi^{-1}(\bullet)$. 
\begin{proposition}\normalfont 
Let $\{(\mathcal{A}_{i,\ell}, \mathcal{A}_{i,r})\}_{i \in I}$ be a family of pairs of algebras in a non-commutative probability space $(\mathcal{A}, \varphi)$. Then $\kappa_{\chi,1_n}$ has the following  cumulant property:
$$ \kappa_{\chi,1_n}(z_{1,1}+z_{2,1},\cdots, z_{1,n}+z_{2,n})=\kappa_{\chi,1_n}(z_{1,1},\cdots, z_{1,n})+\kappa_{\chi,1_n}(z_{2,1},\cdots, z_{2,n})$$
whenever $\omega_1,\omega_2:[n]\rightarrow I$,$z_{1,k}\in\A_{\omega_1(k),\beta(k)}$, $z_{2,k}\in\A_{\omega_2(k),\beta(k)}$  and $\omega([n])\cap\omega'([n])=\emptyset$.
\end{proposition}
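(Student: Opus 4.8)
The plan is to reduce the identity to the vanishing of mixed free-Boolean cumulants, using only the multilinearity of $\kappa_{\chi,1_n}$. First I would note that $\kappa_{\chi,1_n}$ is $n$-linear in its entries: by definition it is a finite linear combination of the maps $\phi_\sigma$ with $\sigma\in INC(\chi)$, $\sigma\leq 1_n$, and each $\phi_\sigma$ is, by its recursive definition, a product of the multilinear maps $\phi^{(s)}$, hence multilinear. Expanding every slot $z_{1,k}+z_{2,k}$ then yields
$$\kappa_{\chi,1_n}(z_{1,1}+z_{2,1},\cdots,z_{1,n}+z_{2,n})=\sum_{\varepsilon\colon[n]\to\{1,2\}}\kappa_{\chi,1_n}\bigl(z_{\varepsilon(1),1},\cdots,z_{\varepsilon(n),n}\bigr),$$
and the two summands attached to the constant functions $\varepsilon\equiv 1$ and $\varepsilon\equiv 2$ are precisely the two terms on the right-hand side of the claimed equality.

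The remaining task is to show that every summand indexed by a non-constant $\varepsilon\colon[n]\to\{1,2\}$ vanishes. Given such an $\varepsilon$, I would set $\omega_\varepsilon(k)=\omega_{\varepsilon(k)}(k)$, so that $z_{\varepsilon(k),k}\in\mathcal{A}_{\omega_\varepsilon(k),\beta(k)}$ for every $k$, with the same colour function $\beta$ as before (since $z_{1,k}$ and $z_{2,k}$ always lie in faces of the same colour $\beta(k)$). The key observation is that $\omega_\varepsilon$ is non-constant: choosing $p,q\in[n]$ with $\varepsilon(p)=1$ and $\varepsilon(q)=2$, one gets $\omega_\varepsilon(p)=\omega_1(p)\in\omega_1([n])$ and $\omega_\varepsilon(q)=\omega_2(q)\in\omega_2([n])$, which are distinct because $\omega_1([n])\cap\omega_2([n])=\emptyset$. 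Since the family $\{(\mathcal{A}_{i,\ell},\mathcal{A}_{i,r})\}_{i\in I}$ is (combinatorially) free-Boolean independent and $\omega_\varepsilon$ is non-constant, the defining vanishing condition applies to $(z_{\varepsilon(1),1},\cdots,z_{\varepsilon(n),n})$ and gives $\kappa_{\chi,1_n}(z_{\varepsilon(1),1},\cdots,z_{\varepsilon(n),n})=0$. Putting this back into the expansion leaves only the two constant-$\varepsilon$ contributions, which is the asserted additivity.

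I expect the proof to be essentially bookkeeping. The only step that takes a moment's thought is the verification that $\omega_\varepsilon$ is non-constant whenever $\varepsilon$ is, and this is the single place the hypothesis $\omega_1([n])\cap\omega_2([n])=\emptyset$ is used; the rest is just the multilinear $2^n$-fold expansion. No appeal is needed to the lattice structure of $INC(\chi)$, to the explicit M\"obius functions, or to the multiplicativity of Theorem \ref{multiplicative property}; it is however worth flagging that the statement tacitly assumes the family to be free-Boolean independent, since that is exactly what forces the mixed cumulants to vanish.
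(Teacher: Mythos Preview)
Your proposal is correct and follows essentially the same route as the paper: expand by multilinearity into $2^n$ terms and kill all non-constant $\varepsilon$ by the vanishing of mixed free-Boolean cumulants. Your remark that the statement tacitly assumes (combinatorial) free-Boolean independence is also right; the paper's own proof invokes it without having stated it in the hypotheses.
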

\begin{proof}
By direct calculation, we have
$$\kappa_{\chi,1_n}(z_{1,1}+z_{2,1},\cdots, z_{1,n}+z_{2,n})=\sum\limits_{i_1,...i_n\in\{1,2\}}\kappa_{\chi,1_n}(z_{i_1,1},\cdots, z_{i_n,n}).$$
Since $\{(\mathcal{A}_{i,\ell}, \mathcal{A}_{i,r})\}_{i \in I}$ are combinatorially free-Boolean independent, by the preceding definition, we have 
$$\kappa_{\chi,1_n}(z_{i_1,1},\cdots, z_{i_n,n})=0$$
if $i_k\neq i_j$ for some $j,k\in[n]$. The result follows.
\end{proof}

\subsection{Free-Boolean is equivalent to combinatorially free-Boolean }In this subsection,  we will prove the following main theorem:

\begin{theorem}\label{Main} \normalfont 
Let $\{(\mathcal{A}_{i,\ell}, \mathcal{A}_{i,r})\}_{i \in I}$ be a family of pairs of faces in a non-commutative probability space $(\mathcal{A}, \varphi)$. $\{(\mathcal{A}_{i,\ell}, \mathcal{A}_{i,r}\}_{i \in I}$ are free-Boolean independent if and only if they are combinatorially free-Boolean.
\end{theorem}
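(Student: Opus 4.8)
The plan is to prove the two implications separately; both build on three facts already in hand: the Möbius inversion of the cumulant definition, $\phi(z_1\cdots z_n)=\sum_{\sigma\in INC(\chi)}\kappa_{\chi,\sigma}(z_1,\dots,z_n)$, valid for any family; the multiplicativity of free-Boolean cumulants, $\kappa_{\chi,\sigma}(z_1,\dots,z_n)=\prod_{V\in\sigma}\kappa_{\chi|_V,1_V}(z_1,\dots,z_n)$, from Theorem \ref{multiplicative property}; and the explicit moment relations for the reduced-free-product model, Propositions \ref{Boolean Moments} and \ref{Free-boolean moments}. Throughout I identify a family of pairs of faces with the family of pairs of subalgebras $(\beta_i(B_i),\gamma_i(C_i))$ of $\A$, and I set $\chi^{-1}(\bullet)=\beta^{-1}(\ell)$, so a left letter sits over $\bullet$ and a right letter over $\circ$; note that both ``free-Boolean independent'' and ``combinatorially free-Boolean'' are properties of the joint distribution alone, so it suffices to verify each implication on convenient representatives.

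\textbf{Free-Boolean $\Rightarrow$ combinatorially free-Boolean.} Since the property is distributional, it is enough to treat a realizing model $\A_{i,\ell}=\lambda_i(\ell_i(B_i))$, $\A_{i,r}=P_{\uplus,i}\rho_i(r_i(C_i))P_{\uplus,i}$ as in Definition \ref{free-Boolean}. I would first prove, by induction on the length $n$ of a word $z_1\cdots z_n$ with $z_k\in\A_{\omega(k),\beta(k)}$, the concrete moment formula
$$\phi(z_1\cdots z_n)=\sum_{\substack{\sigma\in INC(\chi)\\ \sigma\le\ker\omega}}\kappa_{\chi,\sigma}(z_1,\dots,z_n)=\sum_{\substack{\sigma\in INC(\chi)\\ \sigma\le\ker\omega}}\ \prod_{V\in\sigma}\kappa_{\chi|_V,1_V}(z_1,\dots,z_n).$$
The inductive step is exactly the recursive reduction described after Proposition \ref{Free-boolean moments}: rewrite the word as $Z_1\cdots Z_m$ with neighbouring $Z$'s over distinct indices, center the non-Boolean $Z$'s, kill the resulting centered alternating sub-word by Proposition \ref{Free-boolean moments}, and split off maximal runs of Boolean factors by Proposition \ref{Boolean Moments}; one then checks that the partitions appearing in the output of this recursion are precisely the elements of $INC(\chi)$ below $\ker\omega$ — the noncrossing part contributed by the left (free) factors, the forced outer $\circ$-blocks by the right (Boolean) factors — and that the coefficients assemble into the product of single-pair cumulants via the Boolean and free moment–cumulant relations. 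Comparing the concrete formula with the universal one $\phi(z_1\cdots z_n)=\sum_{\sigma\in INC(\chi)}\kappa_{\chi,\sigma}(z_1,\dots,z_n)$ gives $\sum_{\sigma\not\le\ker\omega}\kappa_{\chi,\sigma}(z_1,\dots,z_n)=0$; a short induction on $n$ (using multiplicativity to kill every $\sigma<1_n$ carrying a mixed block) then isolates $\kappa_{\chi,1_n}(z_1,\dots,z_n)=0$ whenever $\omega$ is non-constant, which is combinatorial free-Boolean independence. This bookkeeping — matching the recursion of Section 4 with the summation over $INC(\chi)$ — is the main obstacle; it is the free-Boolean amalgam of the classical mixed-free-cumulant computation with the Boolean moment–cumulant identity.

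\textbf{Combinatorially free-Boolean $\Rightarrow$ free-Boolean.} Assume $\{(\A_{i,\ell},\A_{i,r})\}_{i\in I}$ is combinatorially free-Boolean. By Theorem \ref{multiplicative property} every $\kappa_{\chi,\sigma}$ with $\sigma\not\le\ker\omega$ has a block on which $\omega$ is non-constant, hence vanishes, so the universal formula collapses to $\phi(z_1\cdots z_n)=\sum_{\sigma\in INC(\chi),\ \sigma\le\ker\omega}\prod_{V\in\sigma}\kappa_{\chi|_V,1_V}(z_1,\dots,z_n)$. On a block $V$ with $\omega|_V$ constantly equal to $i$, the factor $\kappa_{\chi|_V,1_V}$ is the Möbius transform over $INC(\chi|_V)$ of moments of the single pair $(\A_{i,\ell},\A_{i,r})$, so it depends only on the distribution of that pair; hence the whole joint distribution of the family is uniquely determined by the individual distributions of the pairs. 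I would then take any free-Boolean independent family with the same individual distributions — for instance the reduced-free-product model over realizations of the single pairs of faces on vector spaces with specified vectors, which is free-Boolean by Definition \ref{free-Boolean} — and observe, by the first implication, that it is also combinatorially free-Boolean, so its joint distribution is given by the same formula. Thus the original family and the model have equal joint distribution, and since free-Boolean independence is a distributional property, the original family is free-Boolean independent.
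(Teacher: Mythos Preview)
Your overall plan is sound and, at the level of strategy, coincides with the paper's: both directions reduce to showing that in the reduced-free-product model the moment formula
\[
\phi(z_1\cdots z_n)=\sum_{\substack{\sigma\in INC(\chi)\\ \sigma\le\ker\omega}}\kappa_{\chi,\sigma}(z_1,\dots,z_n)
\]
holds (equivalently, formula~$(\bigstar)$ of Lemma~\ref{c-fb}); once this is known, your second implication---build a free-Boolean model with the same marginals, apply the first implication to it, and compare joint distributions---is exactly what the paper does, only packaged a bit differently.

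Where you and the paper genuinely diverge is in \emph{how} that formula is established for the model. You propose induction on the word length~$n$, feeding the recursion of Section~4 (center the non-Boolean $Z$'s, expand, kill the fully-centered term via Proposition~\ref{Free-boolean moments}, split Boolean runs via Proposition~\ref{Boolean Moments}) and then matching the surviving terms with the $INC(\chi)$-sum. The paper instead inducts on the number $|\chi^{-1}(\circ)\cap[2,n-1]|$ of interior $\circ$-points, using two structural observations: (i) the lattice factorization $INC(\chi)\cong NC([1,l_1])\times INC(\chi')$ of Proposition~\ref{lattice isomorphism}, and (ii) the fact that the tail $Z_1=\prod_{i\ge l_1} z_i$, starting at the first interior $\circ$-letter, has range contained in $\C\xi\oplus\mrx_{\omega(l_1)}$ and hence is itself an element of the \emph{left}-face algebra $\A_{\omega(l_1),\ell}$. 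This collapses the first $l_1$ letters to a purely free situation (the base case) and leaves an $INC(\chi')$ problem with one fewer interior~$\circ$; the M\"obius function and $\phi_\sigma$ then factor along~(i), and the formula follows.

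Your route is plausible, but the step you yourself flag as ``the main obstacle''---identifying the output of the Section~4 recursion with the $INC(\chi)$-sum below $\ker\omega$, with the correct coefficients---is not sketched, and it is genuinely delicate: the centering expansion produces $2^s$ terms (with $s$ the number of non-Boolean $Z$'s), each of which must be re-grouped and re-expanded, and one has to show the resulting tree of reductions enumerates exactly the interval-noncrossing partitions with the Möbius weights. The paper's induction sidesteps this combinatorics entirely by exploiting the product structure of the lattice, which is what the whole of Section~5 was built for.
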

It is sufficient to show  to  that mixed  moments  are uniquely determined by lower mixed moments in the same way for the two independence relations.  By Proposition 10.6 in \cite{NS} and Theorem \ref{multiplicative property}, we have  the following result.
\begin{lemma}\label{Moments-cumulant}  \normalfont  Let  $z_1,\cdots,z_n$ be noncommutative random variables in a noncommutative probability space $(\A,\phi)$.  Then
$$\phi(z_1\cdots z_n)=\sum\limits_{\pi\in INC(\chi)} \kappa_{\chi,\pi} (z_1,\cdots, z_n)$$
\end{lemma}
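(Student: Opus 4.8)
The plan is to prove the Möbius-type moment--cumulant formula $\phi(z_1\cdots z_n)=\sum_{\pi\in INC(\chi)}\kappa_{\chi,\pi}(z_1,\dots,z_n)$ by a standard lattice-inversion argument carried out on the lattice $INC(\chi)$. The starting point is the definition of the free-Boolean cumulants: by definition, for every $\pi\in INC(\chi)$,
$$\kappa_{\chi,\pi}(z_1,\dots,z_n)=\sum_{\substack{\sigma\le\pi\\ \sigma\in INC(\chi)}}\mu_{INC}(\sigma,\pi)\,\phi_\sigma(z_1,\dots,z_n).$$
In the notation of Section 6, this says that, as functions on $INC(\chi)^{(2)}$ restricted to the interval $[0_n,\,\cdot\,]$, the array $\pi\mapsto\kappa_{\chi,\pi}$ is the $\zeta$-convolution inverse transform of $\pi\mapsto\phi_\pi$; equivalently $\phi_\pi=\sum_{\sigma\le\pi}\zeta_{INC(\chi)}(\sigma,\pi)\kappa_{\chi,\sigma}=\sum_{\sigma\le\pi,\ \sigma\in INC(\chi)}\kappa_{\chi,\sigma}$, because $\mu_{INC}$ and $\zeta_{INC(\chi)}$ are mutually inverse under convolution (this is exactly the content of the $\mu*\zeta=\zeta*\mu=\delta$ relation recalled from \cite{Rota}, together with the fact, established in Section 6, that $INC(\chi)$ is a lattice and $\mu_{INC}$ its Möbius function). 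Applying this inverted formula at $\pi=1_n$ gives precisely $\phi(z_1\cdots z_n)=\phi_{1_n}(z_1,\dots,z_n)=\sum_{\sigma\in INC(\chi)}\kappa_{\chi,\sigma}(z_1,\dots,z_n)$, which is the claim.

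The one genuine point to check — and I expect this to be the main obstacle — is that the definition of $\kappa_{\chi,\pi}$ really does invert to $\phi_\pi=\sum_{\sigma\le\pi}\kappa_{\chi,\sigma}$ and not to something slightly different, since $\phi_\pi$ was defined recursively through interval blocks rather than directly as a sum over the lattice. So the first step I would carry out is to verify the multiplicativity/compatibility of $\phi_\pi$ with the lattice structure: namely that $\phi_\pi(z_1,\dots,z_n)=\prod_{s=1}^t\phi^{(|V_s|)}(z_j: j\in V_s)$ for $\pi=\{V_1,\dots,V_t\}\in INC(\chi)$, which follows from the recursive definition since every block of an interval-noncrossing partition can be peeled off as an interval block after restriction (an induction on the number of blocks, using that $\pi\in INC(\chi)$ forces an inner structure amenable to the peeling). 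Given this, $\phi_\pi$ is the ``multiplicative extension'' of $\phi^{(\cdot)}$ along $\pi$, exactly as in \cite{NS}, and the abstract inversion on the poset $INC(\chi)$ applies verbatim: Proposition 10.6 in \cite{NS} (cited in the lemma statement) is the incidence-algebra fact that a multiplicative family and its cumulant transform are related by $\phi_\pi=\sum_{\sigma\le\pi}\kappa_\sigma$ iff $\kappa_\pi=\sum_{\sigma\le\pi}\mu(\sigma,\pi)\phi_\sigma$, and Theorem \ref{multiplicative property} supplies the needed multiplicativity of $\kappa_{\chi,\pi}$ over blocks so that the transform is genuinely the inverse on each interval $[0_n,\pi]$.

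Concretely, the steps in order are: (1) establish the block-multiplicativity of $\phi_\pi$ on $INC(\chi)$; (2) invoke Theorem \ref{multiplicative property} for the block-multiplicativity of $\kappa_{\chi,\pi}$; (3) apply the Möbius inversion on $INC(\chi)$ (Section 6, \cite{Rota}, Proposition 10.6 in \cite{NS}) to the pair of multiplicative families $(\phi_\pi)$, $(\kappa_{\chi,\pi})$ to get $\phi_\pi=\sum_{\sigma\le\pi,\ \sigma\in INC(\chi)}\kappa_{\chi,\sigma}$ for all $\pi\in INC(\chi)$; (4) specialize to $\pi=1_n$. The only subtlety beyond bookkeeping is making sure the recursive definition of $\phi_\pi$ via interval blocks agrees, on $INC(\chi)$, with the block-product formula — once that is in hand, everything is the standard incidence-algebra inversion, and Lemma \ref{Mobius transform} / Proposition \ref{Mobius transform2} are exactly what guarantees that the local Möbius factors assemble correctly across the intervals $[l_{i-1},l_i]$.
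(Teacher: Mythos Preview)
Your proposal is correct and matches the paper's approach, which simply invokes Proposition~10.6 of \cite{NS} together with Theorem~\ref{multiplicative property} without further argument. Note, however, that your steps (1)--(2) are unnecessary detours: the inversion $\phi_\pi=\sum_{\sigma\le\pi,\,\sigma\in INC(\chi)}\kappa_{\chi,\sigma}$ follows \emph{immediately} from the defining relation $\kappa_{\chi,\pi}=\sum_{\sigma\le\pi}\mu_{INC}(\sigma,\pi)\phi_\sigma$ and the identity $\mu_{INC}*\zeta_{INC}=\delta$ on the lattice $INC(\chi)$, with no need to verify block-multiplicativity of either $\phi_\pi$ or $\kappa_{\chi,\pi}$; specializing to $\pi=1_n$ then gives the lemma since $\phi_{1_n}=\phi(z_1\cdots z_n)$ directly from the recursive definition.
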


For combinatorially free-Boolean independent random variables, we have the following result.
\begin{lemma}\label{c-fb}\normalfont 
Let  $\{(\mathcal{A}_{i,\ell}, \mathcal{A}_{i,r})\}_{i \in I}$ be a family of  combinatorially free-Boolean independent  pairs of faces in  a noncommutative probability space $(\A,\phi)$.  Assume that $z_k\in\A_{\omega(k),\beta(k)}$, where $\omega:[n]\rightarrow I$, $\beta:[n]\rightarrow \{\ell, r\}$.  Let $\epsilon=\ker \omega$. Then, 
\begin{equation}\label{recursive relation}
\phi(z_1\cdots z_n)=\sum\limits_{\sigma\in INC(\chi)} \Big(\sum\limits_{\substack{ \pi\in INC(\chi)\\ \sigma\leq \pi\leq \epsilon}}\mu_{INC}(\sigma,\pi)\Big)\phi_{\sigma} (z_1,\cdots, z_n). \tag{$\bigstar$}
\end{equation}
\end{lemma}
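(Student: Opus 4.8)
The plan is to start from the moment-cumulant formula of Lemma~\ref{Moments-cumulant}, namely $\phi(z_1\cdots z_n)=\sum_{\pi\in INC(\chi)}\kappa_{\chi,\pi}(z_1,\cdots,z_n)$, and then expand each free-Boolean cumulant $\kappa_{\chi,\pi}$ via its defining M\"obius sum $\kappa_{\chi,\pi}=\sum_{\sigma\leq\pi,\ \sigma\in INC(\chi)}\mu_{INC}(\sigma,\pi)\phi_\sigma$. Substituting and interchanging the order of summation over $\sigma$ and $\pi$ gives
$$\phi(z_1\cdots z_n)=\sum_{\sigma\in INC(\chi)}\Big(\sum_{\substack{\pi\in INC(\chi)\\ \sigma\leq\pi}}\mu_{INC}(\sigma,\pi)\Big)\phi_\sigma(z_1,\cdots,z_n),$$
so the entire content of the lemma is to show that the inner coefficient collapses from a sum over all $\pi\geq\sigma$ to a sum over only those $\pi$ with $\sigma\leq\pi\leq\epsilon$, where $\epsilon=\ker\omega$.

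The key step is to show that $\kappa_{\chi,\pi}(z_1,\cdots,z_n)=0$ whenever $\pi\not\leq\epsilon$. For this I would invoke the multiplicative property of free-Boolean cumulants, Theorem~\ref{multiplicative property}: writing $\pi=\{V_1,\cdots,V_t\}$ we have $\kappa_{\chi,\pi}(z_1,\cdots,z_n)=\prod_{s=1}^t\kappa_{\chi|_{V_s},1_{V_s}}(z_1,\cdots,z_n)$. If $\pi\not\leq\epsilon$, then some block $V_s$ is not contained in a single block of $\epsilon=\ker\omega$; that is, the map $\omega$ restricted to $V_s$ is not constant. By the definition of combinatorial free-Boolean independence, the factor $\kappa_{\chi|_{V_s},1_{V_s}}(z_1,\cdots,z_n)=0$ (here one must check that $\chi|_{V_s}$ and $\beta|_{V_s}$ are compatibly related, which holds since $\beta^{-1}(\ell)=\chi^{-1}(\bullet)$ is a global relation that restricts to each $V_s$). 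Hence the whole product vanishes, so $\kappa_{\chi,\pi}(z_1,\cdots,z_n)=0$ for every $\pi\not\leq\epsilon$, and only the partitions $\pi\leq\epsilon$ survive in the moment-cumulant sum. Plugging back the M\"obius expansion of the surviving $\kappa_{\chi,\pi}$'s and interchanging summation order again yields exactly the claimed identity \eqref{recursive relation}, since $\sum_{\sigma\leq\pi\leq\epsilon}$ is what remains once $\pi$ is restricted to the interval below $\epsilon$.

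The main obstacle I anticipate is purely bookkeeping: justifying the interchange of the two finite sums (over $\sigma$ and over $\pi$) while keeping careful track of the condition $\sigma\in INC(\chi)$ in both the outer and inner sums, and verifying that for $\pi\leq\epsilon$ every $\sigma$ with $\sigma\leq\pi$ automatically lies in $INC(\chi)$ as needed (this uses that $INC(\chi)$ is closed downward inside the relevant intervals, cf.\ Corollary~\ref{canonical isomorphism}). One should also confirm that the restriction $\chi|_{V_s}$ appearing after applying Theorem~\ref{multiplicative property} has $1$ and $|V_s|$ colored consistently so that the combinatorial-independence hypothesis genuinely applies to $\kappa_{\chi|_{V_s},1_{V_s}}$; this is where the remark that $INC(\chi)$ depends only on $\chi|_{\{2,\dots,n-1\}}$ is useful. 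Once the vanishing $\kappa_{\chi,\pi}=0$ for $\pi\not\leq\epsilon$ is in hand, the rest is a one-line reorganization of finite sums.
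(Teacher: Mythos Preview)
Your proposal is correct and follows essentially the same approach as the paper's own proof: start from the moment--cumulant formula of Lemma~\ref{Moments-cumulant}, use the multiplicativity of Theorem~\ref{multiplicative property} together with the definition of combinatorial free-Boolean independence to kill all $\kappa_{\chi,\pi}$ with $\pi\not\leq\epsilon$, then expand the surviving cumulants and interchange the two finite sums. The obstacles you flag are non-issues---the restriction $\sigma\in INC(\chi)$ is already built into the definition of $\kappa_{\chi,\pi}$, so no downward-closure argument is needed, and the interchange of finite sums requires no justification.
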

\begin{proof}
By Lemma \ref{Moments-cumulant}, we have 
$$\phi(z_1\cdots z_n)=\sum\limits_{\pi\in INC(\chi)} \kappa_{\chi,\pi} (z_1,\cdots ,z_n).$$
For each $\pi\in INC(\chi)$, assume that $\pi=\{V_1,\cdots,V_t\}$. By Theorem \ref{multiplicative property}, we have  
$$\kappa_{\chi,\pi} (z_1\cdots z_n)=\prod\limits_{s=1}^t \kappa_{\chi|_{V_s},1_{V_s}}(z_1,\cdots,z_n). $$
Since $\{(\mathcal{A}_{i,\ell}, \mathcal{A}_{i,r})\}_{i \in I}$ are combinatorially free-Boolean,
$$\kappa_{\chi|_{V_s},1_{V_s}}(z_1,\cdots,z_n)=0$$
if $\omega$ is not a  constant on ${V_s}$.   It follows that $\kappa_{\chi,\pi}(z_1,\cdots,z_n)\neq 0$ only if $\omega$ is  a constant on $|_{V_s}$ for all $s$, which implies that $V_s$ is contained in a block of $\epsilon$ for all $s$, i.e.,  $\pi\leq \epsilon$.
 Therefore, we have
$$
\begin{array}{rcl}
\phi(z_1\cdots z_n)
&=&\sum\limits_{\pi\in INC(\chi),\pi\leq \epsilon} \kappa_{\chi,\pi} (z_1,\cdots, z_n)\\
                               &=&\sum\limits_{\pi\in INC(\chi),\pi\leq \epsilon} \Big(\sum\limits_{\substack{ \sigma\in INC(\chi)\\ \sigma\leq \pi}}\mu_{INC}(\sigma,\pi)\phi_{\sigma} (z_1,\cdots, z_n)\Big)\\
                               &=&\sum\limits_{\sigma\in INC(\chi)} \Big(\sum\limits_{\substack{ \pi\in INC(\chi)\\ \sigma\leq \pi\leq \epsilon}}\mu_{INC}(\sigma,\pi)\Big)\phi_{\sigma} (z_1,\cdots ,z_n).\\
\end{array}
$$
\end{proof}

Now, we turn to consider the case that the family $\{(\mathcal{A}_{i,\ell}, \mathcal{A}_{i,r})\}_{i \in I}$ is  free-Boolean independent in $(\A,\phi)$ in the sense of Definition \ref{free-Boolean}. 

We  assume that $z_k\in\A_{\omega(k),\beta(k)}$, where $\omega:[n]\rightarrow I$, $\beta:[n]\rightarrow \{\ell, r\}$. Let $\epsilon$ be the kernel of $\omega$. Let $\chi_1$ and $\epsilon_1$ be the restriction of $\chi$ and $\epsilon$ to the first interval $\{1,\cdots, l_1\}$ respectively.  
Let $\chi_1'$ and $\epsilon_1'$ be the restriction of $\chi$ and $\epsilon$ to the first interval $\{l_1,\cdots, n\}$ respectively.  We need to show that  the the mixed moments $\phi(z_1\cdots z_n)$ can be determined in the same way as in Lemma \ref{c-fb}. 

It is sufficient to consider the case that $\A=\LL(\X)$,  $\mathcal{A}_{i,\ell} =\lambda_i(\mathcal{L}(\X_i))$ and $\mathcal{A}_{i,r} =P_{\uplus,i}\lambda_i(\mathcal{L}(\X_i))P_{\uplus,i}$, where   $(\X_i, \mrx_i,\xi_i)_{i\in I}$ is a family of vector spaces with specified vectors and $(\mathcal{X}, \mrx,\xi)$ is the  reduced free product  of them.  $\phi=\phi_\xi$ is the linear functional associated with $\xi$ in $\X$.

We will prove the mixed moments formula $(\bigstar)$ in Lemma \ref{c-fb}   by  induction on the number of elements  of  $\chi^{-1}(\circ)\cap[2,n-1]$. 

\begin{lemma}\label{replace z_n} \normalfont  If $\chi(n)=\circ$, then there exists a $T\in \A_{\omega(n),\ell}$ such that 
$$\phi(z_1\cdots z_n)= \phi(z_1\cdots z_{n-1}T).$$
\end{lemma}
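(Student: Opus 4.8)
The plan is to unwind the definitions and observe that the two occurrences of $P_{\uplus,i}$ surrounding $\lambda_i$ are invisible once the operator is applied to $\xi$. Recall that it suffices to work in the model where $\A=\LL(\X)$, $\A_{i,\ell}=\lambda_i(\LL(\X_i))$ and $\A_{i,r}=P_{\uplus,i}\lambda_i(\LL(\X_i))P_{\uplus,i}$, with $\phi=\phi_\xi$. Since $\chi(n)=\circ$ we have $\beta(n)=r$, so $z_n\in\A_{\omega(n),r}$; writing $i=\omega(n)$, there is $S\in\LL(\X_i)$ with $z_n=P_{\uplus,i}\lambda_i(S)P_{\uplus,i}$. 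The candidate operator is simply $T:=\lambda_i(S)=\lambda_{\omega(n)}(S)\in\A_{\omega(n),\ell}$, and I claim in fact $z_n\xi=T\xi$.

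To see this, first note $P_{\uplus,i}\xi=\xi$, because $\xi\in\C\xi\oplus\mrx_i$ and $P_{\uplus,i}$ restricts to the identity on $\C\xi\oplus\mrx_i$; hence $z_n\xi=P_{\uplus,i}\lambda_i(S)\xi$. Next, using $V_i^{-1}\xi=\xi_i\otimes\xi$ one computes $\lambda_i(S)\xi=V_i(S\otimes I_{\X(\ell,i)})(\xi_i\otimes\xi)=V_i(S\xi_i\otimes\xi)$, and decomposing $S\xi_i=\phi_i(S)\xi_i+w$ with $\phi_i$ the functional associated with $(\X_i,\mrx_i,\xi_i)$ and $w\in\mrx_i$, this equals $\phi_i(S)\xi+w\in\C\xi\oplus\mrx_i$. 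Therefore $P_{\uplus,i}$ also fixes $\lambda_i(S)\xi$, so $z_n\xi=\lambda_i(S)\xi=T\xi$. Finally, since $\phi_\xi(A)=\phi(A\xi)$ for $A\in\LL(\X)$, for arbitrary $z_1,\dots,z_{n-1}\in\LL(\X)$ we obtain $\phi(z_1\cdots z_n)=\phi\big((z_1\cdots z_{n-1})(z_n\xi)\big)=\phi\big((z_1\cdots z_{n-1})(T\xi)\big)=\phi(z_1\cdots z_{n-1}T)$, which is the assertion.

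There is no serious obstacle here; the only point requiring a moment's care is the inclusion $\lambda_i(\LL(\X_i))\xi\subseteq\C\xi\oplus\mrx_i$, equivalently that $P_{\uplus,i}$ acts as the identity on this set, and this is read off directly from the explicit description of the isomorphism $V_i$ (namely $\xi_i\otimes\xi\mapsto\xi$ and $\mrx_i\otimes\xi\mapsto\mrx_i$). I would emphasize that this lemma is the base reduction step for the induction to follow: it allows one to replace the terminal ``right'' letter $z_n$ by a ``left'' letter without changing the moment, which is exactly what is needed to match the free-Boolean moment recursion against the combinatorial recursion of Lemma~\ref{c-fb}.
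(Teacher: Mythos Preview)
Your proof is correct and follows the same approach as the paper: write $z_n=P_{\uplus,i}TP_{\uplus,i}$ with $T\in\lambda_i(\LL(\X_i))$, observe that both $\xi$ and $T\xi$ lie in $\C\xi\oplus\mrx_i$ so the two projections act as the identity, and conclude $z_n\xi=T\xi$. The only difference is cosmetic: the paper states $T\xi\in P_{\uplus,\omega(n)}\X$ without comment, whereas you verify this inclusion by an explicit computation via $V_i$.
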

\begin{proof}
If $\chi(n)=\circ,$ then $z_n$ is from a Boolean face. 
Therefore, $z_n\in \A_{\omega(n),r}=P_{\uplus,\omega(n)}\lambda_{\omega(n)}(\mathcal{L}(\X_{\omega(n)}))P_{\uplus,\omega(n)}$. 
Assume that $z_n=P_{\uplus,\omega(n)}TP_{\uplus,\omega(n)}$ for some $T\in \lambda_{\omega(n)}(\mathcal{L}(\X_{\omega(n)}))$. Then 
$$z_1\cdots z_n\xi=z_1\cdots z_{n-1}P_{\uplus,\omega(n)}TP_{\uplus,\omega(n)}\xi=z_1\cdots z_{n-1}P_{\uplus,\omega(n)}T\xi=z_1\cdots z_{n-1}T\xi$$
since $\xi, T\xi\in P_{\uplus,\omega(n)}\X$.  Thus,  the mixed moments are the same if we replace $z_n$ by the element $T\in\lambda_{\omega(n)}(\mathcal{L}(\X_{\omega(n)}))$.
\end{proof}

\begin{lemma}\label{replace z_1} \normalfont If $\chi(1)=\circ$, then there exists a $T\in \A_{\omega(n),\ell}$ such that 
$$\phi(z_1\cdots z_n)= \phi(Tz_2\cdots z_{n}).$$
\end{lemma}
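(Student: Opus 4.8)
The plan is to mirror the proof of Lemma \ref{replace z_n}, exploiting the left--right symmetry of the reduced free product construction. Since $\chi(1)=\circ$, the first letter $z_1$ comes from a Boolean face, so $z_1\in\A_{\omega(1),r}=P_{\uplus,\omega(1)}\lambda_{\omega(1)}(\LL(\X_{\omega(1)}))P_{\uplus,\omega(1)}$. Write $z_1=P_{\uplus,\omega(1)}TP_{\uplus,\omega(1)}$ for some $T\in\lambda_{\omega(1)}(\LL(\X_{\omega(1)}))$; our candidate replacement is precisely this $T\in\A_{\omega(1),\ell}$ (note the statement's \lq\lq $\A_{\omega(n),\ell}$\rq\rq{} should read $\A_{\omega(1),\ell}$). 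We must show $\phi_\xi(z_1z_2\cdots z_n)=\phi_\xi(Tz_2\cdots z_n)$, i.e. $\phi\big((z_1-T)z_2\cdots z_n\big)=0$.

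First I would reduce to understanding the action of $z_1-T = P_{\uplus,\omega(1)}TP_{\uplus,\omega(1)} - T$ on the vector $w:=z_2\cdots z_n\xi$. Unlike the $z_n$ case — where one only needed $\xi,T\xi\in P_{\uplus,\omega(n)}\X$ and the projection on the \emph{right} did nothing because it was applied first to $\xi$ — here the complication is that $w$ need not lie in $P_{\uplus,\omega(1)}\X$, so the rightmost $P_{\uplus,\omega(1)}$ in $z_1$ genuinely truncates $w$. The key step is therefore to analyze $\phi\big(P_{\uplus,\omega(1)}T P_{\uplus,\omega(1)} w\big)$ versus $\phi(Tw)$ using the structure $w = a\xi + v$ with $a\in\C$, $v\in\mrx$, decomposed according to the direct-sum decomposition of $\mrx$ into tensor products indexed by alternating sequences. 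Since $T=\lambda_{\omega(1)}(S)$ for some $S\in\LL(\X_{\omega(1)})$, and $\lambda_{\omega(1)}(S)=V_{\omega(1)}(S\otimes I_{\X(\ell,\omega(1))})V_{\omega(1)}^{-1}$, the vector $Tw$ lands in $\X$, and applying $\phi_\xi$ extracts the coefficient of $\xi$. I would argue that the $\xi$-coefficient of $P_{\uplus,\omega(1)}TP_{\uplus,\omega(1)}w$ equals the $\xi$-coefficient of $TP_{\uplus,\omega(1)}w$ (because $\xi\in P_{\uplus,\omega(1)}\X$ and $P_{\uplus,\omega(1)}$ fixes $\xi$), and then that the $\xi$-coefficient of $T(w - P_{\uplus,\omega(1)}w)$ vanishes: the vector $w-P_{\uplus,\omega(1)}w$ lies in $\bigoplus_{m\geq 1}\big(\bigoplus_{i_1\neq\cdots\neq i_m,\, \text{not of Boolean form at }\omega(1)}\mathring{\X_{i_1}}\otimes\cdots\otimes\mathring{\X_{i_m}}\big)$, and applying $\lambda_{\omega(1)}(S)=V_{\omega(1)}(S\otimes I)V_{\omega(1)}^{-1}$ to such a tensor either leaves the length $\geq 2$ or, when it acts on a pure tensor starting with $\mathring{\X_{i_1}}$ with $i_1\neq\omega(1)$, produces vectors of the form $(\text{something in }\X_{\omega(1)})\otimes(\text{original tensor})$, none of which is a multiple of $\xi$. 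Thus $\phi_\xi(T(w-P_{\uplus,\omega(1)}w))=0$, which is exactly what is needed.

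Concretely the computation runs: $\phi_\xi(z_1 z_2\cdots z_n) = \phi(z_1 w) = \phi(P_{\uplus,\omega(1)}TP_{\uplus,\omega(1)} w) = \phi(T P_{\uplus,\omega(1)} w) = \phi(Tw) - \phi(T(w-P_{\uplus,\omega(1)}w)) = \phi(Tw) = \phi(Tz_2\cdots z_n)$, where the third equality uses that $\xi$ is in the range of $P_{\uplus,\omega(1)}$ and $\phi = \phi\circ(\text{coefficient of }\xi\text{ in }\cdot\,\xi)$, and the fifth uses the vanishing established above. The main obstacle — and the place where genuine care is required — is the step $\phi(P_{\uplus,\omega(1)}TP_{\uplus,\omega(1)}w) = \phi(TP_{\uplus,\omega(1)}w)$ together with the subsequent vanishing; one has to be precise about how $P_{\uplus,\omega(1)}$ commutes past $T=\lambda_{\omega(1)}(S)$ at the level of the $\xi$-coefficient, which is really the assertion $\alpha_{\omega(1)}(S) = P_{\uplus,\omega(1)}\lambda_{\omega(1)}(S)P_{\uplus,\omega(1)}$ from Section 2 combined with $\phi_\xi\circ P_{\uplus,\omega(1)} = \phi_\xi$ on the relevant range. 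This is the exact left-handed analogue of what was used for $z_n$, and once it is in place the lemma follows verbatim as above.
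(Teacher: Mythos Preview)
Your proof is correct and follows essentially the same route as the paper's. The paper makes two of your steps slightly more explicit: it introduces the rank-one projection $p$ onto $\C\xi$ and uses $pP_{\uplus,\omega(1)}=p$ to drop the leftmost $P_{\uplus,\omega(1)}$ (your third equality), and it phrases your vanishing step as the observation that the subspace $(I_{\X}-P_{\uplus,\omega(1)})\X=\bigoplus_{i\neq\omega(1)}\mrx_i\oplus\bigoplus_{m\geq 2}(\cdots)$ equals $V_{\omega(1)}\big(\X_{\omega(1)}\otimes\mathring{\X(\ell,\omega(1))}\big)$ and is therefore $T$-invariant, whence $\phi(T(I-P_{\uplus,\omega(1)})w)=0$; but these are the same ideas you wrote down.
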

\begin{proof}
 If $\chi(1)=\circ,$ then $z_1\in \A_{\omega(1),r}=P_{\uplus,\omega(1)}\lambda_{\omega(1)}(\mathcal{L}(\X_{\omega(1)}))P_{\uplus,\omega(1)}$. Assume that $z_1=P_{\uplus,\omega(1)}TP_{\uplus,\omega(1)}$ for some $T\in \lambda_{\omega(1)}(\mathcal{L}(\X_{\omega(1)}))$.  Let $p$ be the projection that $p\xi=\xi$ and $p|\mrx=0$.  Then $pP_{\uplus,\omega(1)}=p$ and 
 
$$
\begin{array}{rcl}
\phi(z_1\cdots z_n\xi)
&=&\phi(P_{\uplus,\omega(1)}TP_{\uplus,\omega(1)}z_2\cdots z_{n}\xi)\\
&=&\phi(pP_{\uplus,\omega(1)}TP_{\uplus,\omega(1)}z_2\cdots z_{n}\xi)\\
&=&\phi(pTP_{\uplus,\omega(1)}z_2\cdots z_{n}\xi)\\
&=&\phi(TP_{\uplus,\omega(1)}z_2\cdots z_{n}\xi)\\
&=&\phi(Tz_2\cdots z_{n}\xi)-\phi(T(I_{\X}-P_{\uplus,\omega(1)})z_2\cdots z_{n}\xi),\\
\end{array}$$
where $I_{\X}$ is the unit of $\LL(\X)$. Notice that  $$(I_{\X}-P_{\uplus,\omega(1)})z_2\cdots z_{n}\xi\in \bigoplus\limits_{i\neq\omega(1)}\mrx_i\oplus\bigoplus\limits_{n\geq 2}\Big(\bigoplus\limits_{i_1\neq i_2\neq\cdots \neq i_n} \mathring{\mathcal{X}_{i_1}}\otimes\cdots \otimes\mathring{\mathcal{X}_{i_n}}\Big) $$
and 
$$\bigoplus\limits_{i\neq\omega(1)}\mrx_i\oplus\bigoplus\limits_{n\geq 2}\Big(\bigoplus\limits_{i_1\neq i_2\neq\cdots \neq i_n} \mathring{\mathcal{X}_{i_1}}\otimes\cdots \otimes\mathring{\mathcal{X}_{i_n}}\Big)=V_{\omega(1)}\big(\X_{\omega(1)}\otimes
\Big(\bigoplus\limits_{\omega(1)\neq i_1\neq i_2\neq\cdots \neq i_n} \mrx_{i_1}\otimes\cdots \otimes\mathring{\mathcal{X}_{i_n}}\Big) \big).$$
Therefore, $\bigoplus\limits_{i\neq\omega(1)}\mrx_i\oplus\bigoplus\limits_{n\geq 2}\Big(\bigoplus\limits_{i_1\neq i_2\neq\cdots \neq i_n} \mathring{\mathcal{X}_{i_1}}\otimes\cdots \otimes\mathring{\mathcal{X}_{i_n}}\Big)$ is an invariant subspace of $T$ and 
$$\phi(T(I_{\X}-P_{\uplus,\omega(1)})z_2\cdots z_{n}\xi)=\phi((I_{\X}-P_{\uplus,\omega(1)})T(I_{\X}-P_{\uplus,\omega(1)})z_2\cdots z_{n}\xi)=0. $$
The last equality follows that $p(I_{\X}-P_{\uplus,\omega(1)})=0.$ Therefore,  the mixed moments are the same if we replace $z_1$ by the element $T\in\lambda_{\omega(1)}(\mathcal{L}(\X_{\omega(1)}))$.

\end{proof}

When $|\chi^{-1}(\circ)\cap[2,n-1]|=0$, $\chi^{-1}(\circ)\cap[2,n-1]=\emptyset$. 
In this case,  $\chi$ can be $\circ$ only at $1$ and $n$.  Thus, $INC(\chi)=NC(n)$ which is set of noncrossing partitions on $[n]$.  Therefore, we have the following result.

\begin{lemma} \normalfont When $|\chi^{-1}(\circ)\cap[2,n-1]|=0$, we have 
$$\phi(z_1\cdots z_n)=\sum\limits_{\sigma\in INC(\chi)} \Big(\sum\limits_{\substack{ \pi\in INC(\chi)\\ \sigma\leq \pi\leq \epsilon}}\mu_{INC}(\sigma,\pi)\Big)\phi_{\sigma} (z_1\cdots z_n).\\$$
\end{lemma}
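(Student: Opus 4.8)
The plan is to reduce the computation of $\phi(z_1\cdots z_n)$ to the situation in which every factor comes from a left face, where the family is freely independent and the ordinary free moment--cumulant machinery applies. First I would record that under the standing assumption $\chi(1)=\chi(n)=\circ$, the hypothesis $|\chi^{-1}(\circ)\cap[2,n-1]|=0$ forces $\chi^{-1}(\circ)=\{1,n\}$, so $m=1$; hence the isomorphism $\alpha$ of Proposition~\ref{lattice isomorphism} is the identity, $INC(\chi)=NC(n)$, and $\mu_{INC}=\mu$, the usual M\"obius function on $NC(n)$. Thus the identity to be proved is exactly the recursion $(\bigstar)$ of Lemma~\ref{c-fb} rewritten over $NC(n)$, i.e. the classical free moment--cumulant relation truncated to $\{\pi\leq\epsilon\}$, where $\epsilon=\ker\omega$.

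Next I would trade the two Boolean letters for left-regular ones. Since $\beta^{-1}(\ell)=\chi^{-1}(\bullet)\supseteq[2,n-1]$, the factors $z_2,\dots,z_{n-1}$ already lie in $\A_{\omega(k),\ell}=\lambda_{\omega(k)}(\LL(\X_{\omega(k)}))$, while $z_1\in\A_{\omega(1),r}$ and $z_n\in\A_{\omega(n),r}$. Applying Lemma~\ref{replace z_n} and then Lemma~\ref{replace z_1} yields $T_1\in\A_{\omega(1),\ell}$ and $T_n\in\A_{\omega(n),\ell}$ with $\phi(z_1\cdots z_n)=\phi(T_1z_2\cdots z_{n-1}T_n)$. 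The crucial observation, read off from the proofs of those two lemmas, is that each replacement is \emph{local}: $T_1$ is determined by $z_1$ alone (via $z_1=P_{\uplus,\omega(1)}T_1P_{\uplus,\omega(1)}$), $T_n$ by $z_n$ alone, independently of the word they occur in, and the two substitutions commute. Writing $w_1=T_1$, $w_n=T_n$ and $w_k=z_k$ for $2\leq k\leq n-1$, this gives $\phi$ of the ordered subword on any $V\subseteq[n]$ in the $w$'s equal to $\phi$ of the same ordered subword in the $z$'s (treating the four cases $1,n\notin V$; $1\in V,n\notin V$; $1\notin V,n\in V$; $1,n\in V$ by the same two lemmas applied to the subword). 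Since for $\sigma\in NC(n)$ the functional $\phi_\sigma$ is the product over the blocks of $\sigma$ of precisely such subword moments, we obtain $\phi_\sigma(w_1,\dots,w_n)=\phi_\sigma(z_1,\dots,z_n)$ for every $\sigma\in NC(n)$. Note also $w_k\in\A_{\omega(k),\ell}$, so the index map of the word $w_1\cdots w_n$ is still $\omega$ and its kernel is still $\epsilon$.

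Finally, the family $\{\lambda_i(\LL(\X_i))\}_{i\in I}$ is freely independent with respect to $\phi_\xi$ (Section~2). Hence, by the characterization of free independence through the vanishing of mixed free cumulants (see, e.g., \cite{NS}), the free cumulants $\kappa^{\mathrm{free}}_\pi(w_1,\dots,w_n)$ vanish unless $\pi\leq\epsilon$, and the free moment--cumulant formula gives
\[
\phi(w_1\cdots w_n)=\sum_{\pi\in NC(n)}\kappa^{\mathrm{free}}_\pi(w_1,\dots,w_n)=\sum_{\substack{\pi\in NC(n)\\ \pi\leq\epsilon}}\kappa^{\mathrm{free}}_\pi(w_1,\dots,w_n).
\]
Substituting $\kappa^{\mathrm{free}}_\pi=\sum_{\sigma\leq\pi}\mu(\sigma,\pi)\phi_\sigma$ and interchanging the sums produces $\phi(w_1\cdots w_n)=\sum_{\sigma\in NC(n)}\bigl(\sum_{\sigma\leq\pi\leq\epsilon}\mu(\sigma,\pi)\bigr)\phi_\sigma(w_1,\dots,w_n)$. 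Combining this with $\phi(z_1\cdots z_n)=\phi(w_1\cdots w_n)$, with $\phi_\sigma(w)=\phi_\sigma(z)$, and with $INC(\chi)=NC(n)$, $\mu_{INC}=\mu$ gives the asserted identity. I expect the only delicate point to be the locality and mutual compatibility of the substitutions from Lemmas~\ref{replace z_1} and~\ref{replace z_n} applied to every subword at once, which is what lets us transport all the $\phi_\sigma$; the rest is the classical free moment--cumulant bookkeeping.
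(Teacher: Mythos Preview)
Your proposal is correct and follows essentially the same approach as the paper: replace the Boolean endpoint letters by left-regular ones via Lemmas~\ref{replace z_1} and~\ref{replace z_n}, invoke the free moment--cumulant formula for the all-left word, and identify $INC(\chi)=NC(n)$. You are in fact more explicit than the paper about the key point---the locality of the substitutions $z_1\mapsto T_1$, $z_n\mapsto T_n$ and their applicability to every subword, which is what justifies $\phi_\sigma(w)=\phi_\sigma(z)$ for each $\sigma$; the paper compresses this into a single invocation of the two lemmas at the end.
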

\begin{proof}
By Lemma \ref{replace z_n} and \ref{replace z_1}, we have 
$$ \phi(z_1\cdots z_n)=\phi(T_1z_2\cdots z_{n-1}T_2),$$
for $T_1\in \A_{\omega(1),\ell}$ and $ T_2\in\A_{\omega(n),\ell}$. Of course, if $z_1\in \A_{\omega(1),\ell}$, then the just let $T_1=z_1.$ The same to $z_n$.  Since $|\chi^{-1}(\circ)\cap[2,n-1]|=0$, $T_1, z_2,\cdots,z_{n-1}, T_2$ are from the left faces of algebras. Notice that 
$(\mathcal{A}_{i,\ell})_{i \in I}$ are  freely independent in $(\A,\phi)$, see \cite{NS}, we have $$
\begin{array}{rcl}
\phi(z_1\cdots z_n)&=&\phi(T_1z_2\cdots z_{n-1}T_2)\\
&=&\sum\limits_{\pi\in NC(n),\pi\leq \epsilon} \kappa_{\pi} (T_1,z_2,\cdots ,z_{n-1},T_2)\\
&=&\sum\limits_{\sigma\in NC(n)} \Big(\sum\limits_{\substack{ \pi\in NC(n)\\ \sigma\leq \pi\leq \epsilon}}\mu(\sigma,\pi)\Big)\phi_{\sigma} (T_1,z_2,\cdots ,z_{n-1},T_2)\\
                               
                               &=&\sum\limits_{\sigma\in INC(\chi)} \Big(\sum\limits_{\substack{ \pi\in INC(\chi)\\ \sigma\leq \pi\leq \epsilon}}\mu_{INC}(\sigma,\pi)\Big)\phi_{\sigma} (T_1,z_2,\cdots ,z_{n-1},T_2)
\end{array} 
$$
 The last equality follows from that $INC(\chi)=NC(n)$ when $|\chi^{-1}(\circ)\cap[2,n-1]|=0$.                             
According to  Lemma \ref{replace z_n} and \ref{replace z_1}, we get  $$\phi(z_1\cdots z_n)=\sum\limits_{\sigma\in INC(\chi)} \Big(\sum\limits_{\substack{ \pi\in INC(\chi)\\ \sigma\leq \pi\leq \epsilon}}\mu_{INC}(\sigma,\pi)\Big)\phi_{\sigma} (z_1,\cdots ,z_n).$$
\end{proof}
  
Now, we turn to prove our main theorem.
\begin{proof}[Proof of Theorem \ref{Main}]
Suppose that  Equation $(\bigstar)$ in Lemma \ref{c-fb} holds  whenever $|\chi^{-1}(\circ)\cap[2,n-1]|=m-2$.  For $|\chi^{-1}(\circ)\cap[2,n-1]|=m-1$,  we can assume $\chi^{-1}(\circ)=\{1=l_0<l_1<\cdots<l_{m-1}<l_m=n\}$.

Let $Z_1=\prod\limits_{i=l_1}^n z_i$.  Since  the range of $z_{l_1}$ is  $\C\xi\oplus \mrx_{\omega(l_1)}$,  $Z_1$ is a linear map from $\C\xi\oplus \mrx_{\omega(l_1)}$  to $\C\xi\oplus \mrx_{\omega(l_1)}$.  Moreover,  $Z_1$ vanishes on all summands of $\X$ except for $\C\xi\oplus \mrx_{\omega(l_1)}$. Therefore,  $Z_1$ is an element in $\lambda_{\omega(l_1)}(\LL(\X_{\omega(l_1)}))$. \\
By induction, we have 
$$
\begin{array}{rcl}
\phi(z_1\cdots z_n)&=&\phi(z_1\cdots z_{l_1-1}Z_1)\\
                               &=&\sum\limits_{\sigma_1\in NC(l_1),\sigma_1\leq \epsilon_1} \Big(\sum\limits_{\substack{ \sigma_1\in NC(l_1)\\ \sigma_1\leq \pi_1\leq \epsilon_1}}\mu_{}(\sigma_1,\pi_1)\Big)\phi_{\sigma_1}(z_1,\cdots ,z_{l_1-1}Z_{1}).\\
\end{array}
$$
Fix $\sigma_1$,  let  $V$ be the block of $\sigma_1$ which contains $l_1$. Then,
$$ 
\begin{array}{rcl}
\phi_{\sigma_1}(z_1\cdots z_{l_1-1}Z_{1})&=&\prod\limits_{W\in \sigma_1} \phi_{W}(z_1,\cdots, z_{l_1-1}Z_1)\\
&=&[\prod\limits_{W\neq V} \phi_{W}((z_1,\cdots, z_{l_1-1})]\phi_{V}(z_1,\cdots ,z_{l_1-1}Z_{1})\\
\end{array}
$$
the last equality holds since $l_1\not\in W$ whenever $W\neq V$.  Let $Z^V_{l_1}=\prod\limits_{i\in V}z_i$, where the product is taken with the original order. Then,
$$ \phi_{V}(z_1,\cdots ,z_{l_1-1}Z_{1})=\phi(Z_{l_1} ^V z_{l_1+1}z_{l_1+2}\cdots z_n).$$
Notice that $|\chi^{-1}[l_1+1,n-1]|=m-2$ . By assumption, we have
$$\phi(Z_{l_1} ^V z_{l_1+1}z_{l_1+2}\cdots z_n)=\sum\limits_{\sigma'\in INC(\chi')} \Big(\sum\limits_{\substack{ \pi'\in INC(\chi)\\ \sigma'\leq \pi'\leq \epsilon'}}\mu_{INC}(\sigma',\pi')\Big)\phi_{\sigma'} (Z_{l_1} ^V ,z_{l_1+1},z_{l_1+2},\cdots ,z_n). $$

Fix $\sigma'$,  assume that $l_1\in V'\in\sigma'$. Then
$$ 
\begin{array}{rcl}
\phi_{\sigma'} (Z_{l_1} ^V, z_{l_1+1},z_{l_1+2},\cdots, z_n)&=&\prod\limits_{W\in \sigma'} \phi_{W}(Z_{l_1} ^V ,z_{l_1+1},z_{l_1+2},\cdots ,z_n)\\
&=&[\prod\limits_{W\neq V'} \phi_{W}((z_{l_1+1},\cdots, z_{n})]\phi_{V'}((Z_{l_1} ^V ,z_{l_1+1},z_{l_1+2},\cdots,z_n)).\\
\end{array}
$$

Therefore,
$$\begin{array}{rcl}
&&\phi(z_1\cdots z_n)\\

&=&\sum\limits_{\substack{\sigma_1\in NC([l_1])\\\sigma_1\leq \epsilon_1}} \Big(\sum\limits_{\substack{ \sigma_1\in NC([l_1])\\ \sigma_1\leq \pi_1\leq \epsilon_1}}\mu_{}(\sigma_1,\pi_1)\Big)[\prod\limits_{\substack{W\in \sigma_1\\W\neq V\ni l_1}} \phi_{W}(z_1,\cdots, z_{l_1-1})]\phi_{V}(z_1,\cdots, z_{l_1-1},Z_{1})\\

&=&\sum\limits_{\substack{\sigma_1\in NC([l_1])\\\sigma_1\leq \epsilon_1}} \Big(\sum\limits_{\substack{ \sigma_1\in NC([l_1])\\ \sigma_1\leq \pi_1\leq \epsilon_1}}\mu_{}(\sigma_1,\pi_1)\Big)\Big\{[\prod\limits_{\substack{W\in \sigma_1\\W\neq V\ni l_1}} \phi_{W}(z_1,\cdots, z_{l_1-1})]\\

&&\sum\limits_{\substack{\sigma'\in INC(\chi')\\\sigma'\leq \epsilon'}} \Big(\sum\limits_{\substack{ \pi'\in INC(\chi)\\ \sigma'\leq \pi'\leq \epsilon'}}\mu_{INC}(\sigma',\pi')\Big)\phi_{\sigma'} (Z_{l_1} ^V, z_{l_1+1},z_{l_1+2},\cdots ,z_n)\Big\} \\

&=&\sum\limits_{\substack{\sigma_1\in NC([l_1])\\\sigma_1\leq \epsilon_1}} \sum\limits_{\substack{\sigma'\in INC(\chi')\\\sigma'\leq \epsilon'}} \Big(\sum\limits_{\substack{ \sigma_1\in NC([l_1])\\ \sigma_1\leq \pi_1\leq \epsilon_1}}\mu(\sigma_1,\pi_1)\Big)\Big(\sum\limits_{\substack{ \pi'\in INC(\chi)\\ \sigma'\leq \pi'\leq \epsilon'}}\mu_{INC}(\sigma',\pi')\Big)\Big\{[\prod\limits_{\substack{W\in \sigma_1\\W\neq V\ni l_1}} \phi_{W}(z_1,\cdots, z_{l_1-1})]\\

&&\phi_{\sigma'} (Z_{l_1} ^V ,z_{l_1+1},z_{l_1+2},\cdots ,z_n)\Big\} \\
\end{array}
$$

For fixed $\sigma_1$ and $\sigma'$, $\sigma=\alpha'^{-1}(\sigma_1,\sigma')\in INC(\chi)$ and
$$\mu_{INC}(\sigma,\pi)=\mu_{}(\sigma_1,\pi_1)\mu_{INC}(\sigma',\pi').$$
Since $\sigma_1\leq \epsilon_1$ and  $\sigma'\leq \epsilon'$,   we have $\sigma\leq \epsilon$.  
By the definition of $\alpha'$, a block $U$ of $\sigma$ can be one of the following three cases:
\begin{enumerate}
\item A block of $\sigma_1$ which does not contain $l_1$. In this case $\phi_U(z_1,\cdots,z_n)=\phi_{U}(z_1,\cdots, z_{l_1-1})$.
\item A block of $\sigma'$  which does not contain $l_1$. In this case $\phi_U(z_1,\cdots,z_n)=\phi_{U}(z_{l_1+1},\cdots, z_{n})$.
\item The block which contains $l_1$. In the case $U=V\cup V'$.  Then, 
$$ \phi_{V'}(Z_{l_1} ^V, z_{l_1+1},z_{l_1+2}\cdots z_n)=\phi(Z^V_{l_1}\prod\limits_{i\in V',i\neq l_1}z_i)=\phi(\prod\limits_{i\in V'\cup V,i\neq l_1}z_i)=\phi_U(z_1,\cdots,z_n.)$$
\end{enumerate}
Therefore,  
$$[\prod\limits_{\substack{W\in \sigma_1\\W\neq V\ni l_1}} \phi_{W}(z_1,\cdots, z_{l_1-1})]\phi_{\sigma'} (Z_{l_1} ^V ,z_{l_1+1},z_{l_1+2},\cdots ,z_n)=\phi_{\sigma}(z_1,\cdots,z_n).$$
and
$$\phi(z_1\cdots z_n)=\sum\limits_{\sigma\in INC(\chi)} \Big(\sum\limits_{\substack{ \pi\in INC(\chi)\\ \sigma\leq \pi\leq \epsilon}}\mu_{INC}(\sigma,\pi)\Big)\phi_{\sigma} (z_1,\cdots ,z_n).$$
Therefore, the mixed moments of free-Boolean independent random variables  and the mixed moments of combinatorially free-Boolean independent random variables  are determined in the same way. Thus the main theorem follows.
\end{proof}

\section{Central limit laws} 
In this section, we study an algebraic free-Boolean central limit theorem which is an analogou of Voiculescu's algebraic bi-free central limit theorem in \cite{Voi1}.   

Let $z=((z_i)_{i\in I},(z_j)_{i\in J})$ be a two faced family of noncommutative random variables in $(\A,\phi)$.  Let $\beta:[n]\rightarrow I\cup J$.   We denote by $\chi_\beta$ the map from $[n]$ to $(\circ,\bullet)$ such that $\chi_\beta(k)=\circ$ if and only if $\beta(k)\in J$.

  Notice that $INC(\chi)= NC(n)$ when  $n=1,2$ and $\chi$ is map from $[n]$ to $\{\circ,\bullet\}$. Therefore, the  second free-Boolean cumulants are, as the free cumulants, variance or covariance of random variables:
\begin{lemma}  \normalfont 
Let   $z=((z_i)_{i\in I},(z_j)_{i\in j})$ be a two faced family of noncommutative random variables in $(\A,\phi)$. Let $\beta: [2]\rightarrow I\cup J$. Then
$$\kappa_{\chi_\beta,1_{[2]}}(z_{\beta(1)}z_{\beta(2)})= \phi(z_{\beta(1)}z_{\beta(2)}) -\phi(z_{\beta(1)})\phi(z_{\beta(2)})$$
\end{lemma}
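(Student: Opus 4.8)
The plan is to unwind the definition of the free-Boolean cumulant $\kappa_{\chi_\beta,1_{[2]}}$ in the case $n=2$ and observe that everything collapses to a trivial Möbius computation. First I would invoke the remark recorded just before the statement: for $n=1,2$ and any coloring $\chi\colon[n]\to\{\circ,\bullet\}$ one has $INC(\chi)=NC(n)$. Hence $INC(\chi_\beta)$ for $n=2$ consists of exactly the two noncrossing partitions of $\{1,2\}$, namely $0_2=\{\{1\},\{2\}\}$ and $1_{[2]}=\{\{1,2\}\}$, independently of what $\beta$ (equivalently $\chi_\beta$) is.

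Next I would write out the defining sum
\[
\kappa_{\chi_\beta,1_{[2]}}(z_{\beta(1)},z_{\beta(2)})=\sum_{\substack{\sigma\le 1_{[2]}\\ \sigma\in INC(\chi_\beta)}}\mu_{INC}(\sigma,1_{[2]})\,\phi_\sigma(z_{\beta(1)},z_{\beta(2)}),
\]
which by the previous paragraph has exactly two terms, $\sigma=1_{[2]}$ and $\sigma=0_2$. For these I need the values of $\mu_{INC}$. Since $INC(\chi_\beta)=NC(2)$ as a lattice, the Möbius function $\mu_{INC}$ agrees with the ordinary $\mu$ on $NC(2)$ (this is the content of the Proposition identifying $\mu_{INC(\chi)}=\bar\mu\circ\alpha$, which for $n\le 2$ has a single factor), so $\mu_{INC}(1_{[2]},1_{[2]})=1$ and $\mu_{INC}(0_2,1_{[2]})=-1$, the standard values on the two-element chain $NC(2)$. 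Finally I would identify the maps $\phi_\sigma$: by the recursive definition of $\phi_\pi$, $\phi_{1_{[2]}}(z_{\beta(1)},z_{\beta(2)})=\phi^{(2)}(z_{\beta(1)},z_{\beta(2)})=\phi(z_{\beta(1)}z_{\beta(2)})$, while $\phi_{0_2}(z_{\beta(1)},z_{\beta(2)})=\phi(z_{\beta(1)})\phi(z_{\beta(2)})$ since $0_2$ has the two singleton interval blocks.

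Putting these together gives
\[
\kappa_{\chi_\beta,1_{[2]}}(z_{\beta(1)},z_{\beta(2)})=1\cdot\phi(z_{\beta(1)}z_{\beta(2)})+(-1)\cdot\phi(z_{\beta(1)})\phi(z_{\beta(2)}),
\]
which is exactly the claimed identity. There is no real obstacle here; the only point that needs a word of justification is the reduction $INC(\chi_\beta)=NC(2)$ together with the corresponding identification of Möbius functions, and both were already established in the excerpt (the former in the Central Limit section's preamble, the latter via Proposition on $\mu_{INC(\chi)}=\bar\mu\circ\alpha$ specialized to a single interval). I would keep the write-up to these few lines.
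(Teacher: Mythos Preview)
Your proposal is correct and follows exactly the approach the paper itself indicates: the paper does not give a formal proof of this lemma but simply precedes it with the observation that $INC(\chi)=NC(n)$ for $n=1,2$, from which the identity is immediate. Your write-up just spells out this one-line deduction in full detail, computing the two terms of the defining sum and the M\"obius values on $NC(2)$.
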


\begin{definition}\normalfont   A two faced family of noncommutative random variables $z=((z_i)_{i\in I},(z_j)_{i\in J})$ has a \emph{free-Boolean} central limit distribution if ,  for all $n\neq 2$,
$$\kappa_{\chi_\beta,1_{[n]}}(z_{\beta(1)},\cdots, z_{\beta(n)})= 0,$$
where $\beta:[n]\rightarrow I\cup J$.
\end{definition}

The following are  examples of free-Boolean families and   free-Boolean central limit distributions:

Let $\hh$ be a complex Hilbert space with orthonormal basis $\{e_i\}_{i\in I}$ and let $\F(\hh)=\C\xi\oplus\bigoplus\limits_{n\geq 1}\hh^{\otimes n}$ be the full Fock space. 
 Let $\ell_i$ be the left creation operators on $\F(\hh)$ such that 	$\ell_i \xi=e_i$ and $\ell_i\zeta=e_i\otimes \zeta$ for all $\zeta\in \bigoplus\limits_{n\geq 1}\hh^{\otimes n}$. 
 Let $P_i$ be the orthogonal projection from $\F(\hh)$ onto $\C\xi\oplus \C e_i$. Then the  family of two faced families 
 $$\{(l_i,l_i^*)_i,(P_il_iP_i,P_il_i^*P_i)_i\}_{i\in I}$$ 
 are free-Boolean in the probability space  $(B(\F(\hh)),\omega_\xi)$, where $(B(\F(\hh))$ is the set of bounded operators on $\F(\hh)$ and $\omega_\xi=\langle\cdot\xi,\xi\rangle$ is the vacuum state on $(B(\F(\hh)) $. 
 Actually,  $P_i$ here play the role of $P_{\uplus,i}$ in the Definition \ref{free-Boolean} and the space $\F(\C e_i)$, which is the Fock space generated by the one dimensional Hilbert space $\C e_i $, plays the role of $\X_i$.  
 
 Suppose that $I$ has a disjoint partition that $I=\bigcup\limits_{k\in K} I_k$. For each $k$, let $\A_{k,l}$ be the unital $C^*$-algebra generated by $\{\ell_i|i\in I_k\}$ and $\A_{k,r}$ be the nonunital $C^*$-algebra generated by $\{P_k\ell_iP_k|i\in I_k\}$, where $P_k$ is the projection from $\F(\hh)$ onto the subspace generated by $\{\xi\} \cup \{e_i|i\in I_k\}$. Then the family of pairs $(\A_{k,l},\A_{k,r})$ are free-Boolean in  $(B(\F(\hh)),\omega_\xi)$.

 The following is an analogue of Theorem 7.4 in \cite{Voi1}:
\begin{theorem}\label{central limit}\normalfont 
There is exactly one free-Boolean central limit distribution $\Gamma_C:\C\langle Z_k| k\in I\cup J \rangle \rightarrow \C$ for a given matrix $C=(C_{k,l})$ with complex entries so that $ \Gamma_C(Z_kZ_l)=C_{k,l}.$

Let $h,h^*: I\cup J\rightarrow \hh$ be  maps into the Hilbert space $\hh$ in the preceding example. 
Let 
$$ z_i=\ell(h(i))+\ell^*(h^*(i))$$ for $i\in I$ and 
$$ z_j=P(\ell(h(j))+\ell^*(h^*(j)))P$$ for $j\in J$, where $P$ is the orthogonal projection from $\F(\hh)$ onto $\C\xi\oplus\hh$ and $\ell(h(i))$ is the creation operator on $\F(\hh)$ such that $\ell(h(i))\xi=h(i)$ and $\ell(h(i))\zeta=h(i)\otimes \zeta$ for all $\zeta\in \bigoplus\limits_{n\geq 1}\hh^{\otimes n}$.
Then $z=((z_i)_{i\in I},(z_j)_{i\in J})$ has a free-Boolean central limit distribution $\Gamma_C$ where $C_{k,l}=\langle h(l),h^*(k)\rangle$.
\end{theorem}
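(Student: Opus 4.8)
The plan is to prove the uniqueness assertion and the Fock-space realization in tandem, the model supplying the existence half. First I would note that uniqueness is essentially forced by the combinatorics of Sections~5--6: if a two-faced family $z=((z_i)_{i\in I},(z_j)_{j\in J})$ in $(\A,\phi)$ has a free-Boolean central limit distribution with $\phi(z_kz_l)=C_{k,l}$, then the moment--cumulant formula (Lemma~\ref{Moments-cumulant}) together with multiplicativity of cumulants (Theorem~\ref{multiplicative property}) gives
\[
\phi(z_{\beta(1)}\cdots z_{\beta(n)})=\sum_{\pi\in INC(\chi_\beta)}\ \prod_{V\in\pi}\kappa_{\chi_\beta|_V,1_V}(z_{\beta(1)},\dots,z_{\beta(n)}).
\]
Since all cumulants of order $\neq 2$ vanish, only the pair partitions $\pi$ contribute, each block $\{a<b\}$ giving $\kappa_{\chi_\beta|_{\{a,b\}},1_{\{a,b\}}}=\phi(z_{\beta(a)}z_{\beta(b)})-\phi(z_{\beta(a)})\phi(z_{\beta(b)})=C_{\beta(a),\beta(b)}$, because the first cumulants also vanish and hence $\phi(z_k)=0$. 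Thus the functional $\Gamma_C$ is uniquely determined, with moments $\sum_{\pi}\prod_{\{a<b\}\in\pi}C_{\beta(a),\beta(b)}$, $\pi$ ranging over the \emph{pair} partitions lying in $INC(\chi_\beta)$ (which are automatically noncrossing). It remains to exhibit one two-faced family with exactly these moments.

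For existence I would compute the vacuum moments of the operators in the statement by the usual Wick expansion. Writing $w_k=\ell(h(k))+\ell^*(h^*(k))$, so that $z_k=w_k$ for $k\in I$ and $z_k=Pw_kP$ for $k\in J$, expand each factor of $z_{\beta(1)}\cdots z_{\beta(n)}$ into its creation and annihilation part and evaluate on $\xi$ reading right to left. On the full Fock space $\F(\hh)$, ignoring the $P$'s, the only nonvanishing terms are indexed by noncrossing pair partitions $\pi$, the right endpoint of each pair acting by creation and the left endpoint by annihilation, and the term of $\pi$ equals $\prod_{\{a<b\}\in\pi}\langle h(\beta(b)),h^*(\beta(a))\rangle=\prod_{\{a<b\}\in\pi}C_{\beta(a),\beta(b)}$. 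The new point is the effect of the compressions: when $\beta(k)\in J$, the right $P$ forces the intermediate Fock vector to have degree $\le 1$ before $w_{\beta(k)}$ acts and the left $P$ forces degree $\le 1$ afterwards, so such a term survives only if every $J$-creation is performed at degree $0$ and every $J$-annihilation at degree $1$. Since the degree of the intermediate vector just before position $k$ equals the number of pairs of $\pi$ that straddle $k$ (plus one if $k$ is itself a left endpoint), in either case survival is equivalent to: no pair of $\pi$ has one element $<k$ and one element $>k$. Because $\pi$ is noncrossing this says exactly that the block of $\pi$ containing $k$ is outer; ranging over $k\in\chi_\beta^{-1}(\circ)$, the surviving $\pi$ are precisely the pair partitions in $INC(\chi_\beta)$, whence
\[
\omega_\xi\big(z_{\beta(1)}\cdots z_{\beta(n)}\big)=\sum_{\substack{\pi\in INC(\chi_\beta)\\ \pi\text{ a pair partition}}}\ \prod_{\{a<b\}\in\pi}C_{\beta(a),\beta(b)},\qquad C_{k,l}=\langle h(l),h^*(k)\rangle .
\]

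Finally I would check this family really has the free-Boolean central limit distribution $\Gamma_C$. A direct evaluation gives $\omega_\xi(z_k)=0$ and $\omega_\xi(z_kz_l)=C_{k,l}$, so its covariance is $C$. For the vanishing of higher cumulants, introduce the multiplicative function $F_{\chi,\pi}=\prod_{V\in\pi}c_V$ with $c_{\{a<b\}}=C_{\beta(a),\beta(b)}$ and $c_V=0$ for $|V|\neq 2$; applying the moment formula above to $z_{\beta(1)}\cdots z_{\beta(n)}$ and, blockwise, to its subwords, one sees that $\sum_{\sigma\in INC(\chi_\beta),\ \sigma\le\pi}F_{\chi_\beta,\sigma}=\phi_\pi(z_{\beta(1)},\dots,z_{\beta(n)})$ for every $\pi\in INC(\chi_\beta)$ --- for general $\pi$ this uses the factorization $[0_n,\pi]\cong\prod_{V\in\pi}INC(\chi_\beta|_V)$ of Corollary~\ref{canonical isomorphism} together with multiplicativity of $F$. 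Since the free-Boolean cumulant $\kappa$ is by definition the Möbius inverse characterized by the same relation $\sum_{\sigma\le\pi}\kappa_{\chi_\beta,\sigma}=\phi_\pi$ on $INC(\chi_\beta)$, uniqueness of Möbius inversion forces $\kappa_{\chi_\beta,\pi}=F_{\chi_\beta,\pi}$; in particular $\kappa_{\chi_\beta,1_{[n]}}=0$ for $n\neq 2$ and $\kappa_{\chi_\beta,1_{[2]}}(z_kz_l)=C_{k,l}$. So $z$ has a free-Boolean central limit distribution with covariance $C$, which by the uniqueness step must be $\Gamma_C$; and since any prescribed matrix $C$ is realized by a suitable choice of $\hh$, $h$, $h^*$ (e.g.\ $h(l)=e_l$, $h^*(k)=\sum_l\overline{C_{k,l}}\,e_l$ in a large enough Hilbert space), this completes existence. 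The step I expect to be the main obstacle is the projection bookkeeping in the second paragraph: isolating exactly which noncrossing pair partitions survive the compressions $P$ and matching them with $INC(\chi_\beta)$; care is needed because $P$ sits on both sides of every right-face operator and adjacent right-face operators share a projection, so the degree-tracking must be set up cleanly. Everything else is a routine adaptation of Voiculescu's free central limit computation on the full Fock space combined with the Möbius-inversion machinery already developed.
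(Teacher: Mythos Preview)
Your proposal is correct and follows essentially the same line as the paper, which offers no argument beyond ``The proof is the same as Voiculescu's.'' You have supplied precisely the adaptation that reference leaves implicit: uniqueness from the moment--cumulant relation on $INC(\chi_\beta)$, and the Fock-space Wick computation in which the compressions $P$ kill exactly those noncrossing pair partitions with a $\circ$-colored point in an inner block, so that the surviving pairings are the pair partitions in $INC(\chi_\beta)$. The degree-tracking you flag as the delicate step is handled correctly (the noncrossing condition is what makes ``no pair straddles $k$'' equivalent to ``the block of $k$ is outer''), and the M\"obius-inversion verification that $\kappa_{\chi_\beta,1_{[n]}}=0$ for $n\neq 2$ is the right way to close the loop.
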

\begin{proof}
The proof is the same as Voiculescu's.  
 \end{proof}

We end this section with an algebraic free-Boolean central limit theorem in analogue of Voiculescu's Theorem 7.9 in \cite{Voi1}

\begin{theorem}
Let $z_n=((z_{n,i})_{i\in I},(z_{n,j})_{i\in J})$, $n\in \mathbb{N}$, be a free-Boolean sequence of families of random variables in $(\A,\phi)$, such that 
\begin{itemize}
\item[1.] $\phi(z_{n,k})=0$, for all $k\in I\cup J$ and $n\in\mathbb{N}$.
\item[2.] $\sup_{n\in\mathbb{N}}|\phi(z_{n,k_1}\cdots z_{n,k_m})|=D_{k_1,\cdots,k_m}<\infty$ for every $k_1,\cdots,k_m\in I\cup J.$
\item[3.] $\lim\limits_{N\rightarrow \infty} \frac{1}{N}\sum\limits_{n=1}^N\phi(z_{n,k}z_{n,l})=C_{kl}$ for every $k,l\in I\cup J$.
\end{itemize}
Let $S_N=((S_{N,i})_{i\in I},(S_{N,j})_{i\in J})$, where $S_{N,k}=\frac{1}{\sqrt{N}}\sum\limits_{n=1}^N z_{n,k}$ with $k\in I\cup J.$  Let $F_C$ be the free-Boolean central limit distribution in Theorem \ref{central limit} with $C=(C_{k,l})$. Then, we have
$$\lim\limits_{N\rightarrow\infty} \mu_{S_N}(P)=\Gamma_C(P)$$
for all $P\in \C\langle Z_k|k\in I\cup J\rangle$.
\end{theorem}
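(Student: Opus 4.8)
The plan is to use the cumulant method. Fix a monomial $Z_{\beta(1)}\cdots Z_{\beta(n)}$ with $\beta:[n]\to I\cup J$ and set $\chi=\chi_\beta$; it suffices to prove convergence of $\mu_{S_N}$ on this monomial and extend by linearity to all $P\in\C\langle Z_k\mid k\in I\cup J\rangle$. First I would expand, using multilinearity of $\phi^{(n)}$ and $S_{N,k}=\frac{1}{\sqrt N}\sum_{n=1}^N z_{n,k}$,
\[
\mu_{S_N}(Z_{\beta(1)}\cdots Z_{\beta(n)})=\frac{1}{N^{n/2}}\sum_{f:[n]\to[N]}\phi\big(z_{f(1),\beta(1)}\cdots z_{f(n),\beta(n)}\big),
\]
and rewrite each inner moment by the moment-cumulant formula of Lemma \ref{Moments-cumulant} as $\sum_{\pi\in INC(\chi)}\kappa_{\chi,\pi}(z_{f(1),\beta(1)},\dots,z_{f(n),\beta(n)})$. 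Since the sequence $(z_n)_n$ is free-Boolean, it is combinatorially free-Boolean by Theorem \ref{Main}; combined with the block factorization $\kappa_{\chi,\pi}=\prod_{V\in\pi}\kappa_{\chi|_V,1_V}$ of Theorem \ref{multiplicative property}, this shows that $\kappa_{\chi,\pi}(z_{f(1),\beta(1)},\dots,z_{f(n),\beta(n)})$ vanishes unless $f$ is constant on every block of $\pi$, i.e.\ unless $\pi\le\ker f$.

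Next I would perform the two standard reductions on the surviving pairs $(\pi,f)$. Hypothesis (1) forces any singleton block $\{k\}$ of $\pi$ to contribute the factor $\kappa_{\chi|_{\{k\}},1_{\{k\}}}(z_{f(k),\beta(k)})=\phi(z_{f(k),\beta(k)})=0$, so we may restrict to $\pi$ all of whose blocks have size $\ge 2$. For such $\pi$ there are exactly $N^{|\pi|}$ functions $f$ with $\pi\le\ker f$, while hypothesis (2) gives a bound $|\kappa_{\chi,\pi}(z_{f(1),\beta(1)},\dots,z_{f(n),\beta(n)})|\le M_{\pi,\beta}$ uniform in $f$ and $N$: each factor $\kappa_{\chi|_V,1_V}$ is a fixed $\mu_{INC}$-linear combination of products of moments of the single family $z_m$ (where $m$ is the value of $f$ on $V$), and every such moment is bounded by the corresponding $D$. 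Hence the contribution of a fixed $\pi$ is $O(N^{|\pi|-n/2})$; since $|\pi|\le n/2$ with equality only for pair partitions, all terms vanish in the limit except those coming from pair partitions $\pi\in INC(\chi_\beta)$ (in particular the limit is $0$ when $n$ is odd).

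For a pair partition $\pi=\{\{p_1,q_1\},\dots,\{p_b,q_b\}\}\in INC(\chi_\beta)$ with $b=n/2$ and $p_s<q_s$, the normalizing factor is $N^{b-n/2}=1$, a function $f$ with $\pi\le\ker f$ is a choice of one index $m_s\in[N]$ per block, and by (1)
\[
\kappa_{\chi|_{\{p_s,q_s\}},1_{\{p_s,q_s\}}}(z_{m_s,\beta(p_s)},z_{m_s,\beta(q_s)})=\phi(z_{m_s,\beta(p_s)}z_{m_s,\beta(q_s)}).
\]
The sum over $(m_1,\dots,m_b)$ factorizes, so hypothesis (3) gives
\[
\lim_{N\to\infty}\frac{1}{N^{b}}\sum_{m_1,\dots,m_b=1}^{N}\ \prod_{s=1}^{b}\phi(z_{m_s,\beta(p_s)}z_{m_s,\beta(q_s)})=\prod_{s=1}^{b}C_{\beta(p_s),\beta(q_s)},
\]
whence $\lim_{N}\mu_{S_N}(Z_{\beta(1)}\cdots Z_{\beta(n)})=\sum_{\pi}\prod_{\{p,q\}\in\pi}C_{\beta(p),\beta(q)}$, the sum being over pair partitions $\pi$ in $INC(\chi_\beta)$. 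Finally I would check that this equals $\Gamma_C(Z_{\beta(1)}\cdots Z_{\beta(n)})$: expanding the latter by Lemma \ref{Moments-cumulant} and using that $\Gamma_C$ (Theorem \ref{central limit}) is a free-Boolean central limit distribution (so $\kappa_{\chi,\pi}$ vanishes unless every block of $\pi$ has size $2$, and $\kappa_{\chi|_{\{p,q\}},1_{\{p,q\}}}(Z_{\beta(p)},Z_{\beta(q)})=\Gamma_C(Z_{\beta(p)}Z_{\beta(q)})=C_{\beta(p),\beta(q)}$) produces exactly the same sum over pair partitions in $INC(\chi_\beta)$.

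The individual steps are routine once the combinatorial apparatus of Sections 5--7 is granted; the point requiring the most care is the first reduction, namely that a surviving pair $(\pi,f)$ is constrained \emph{only} by $\pi\le\ker f$, with no extra ``neighbouring blocks must carry different indices'' condition. This is exactly where block-multiplicativity (Theorem \ref{multiplicative property}) together with the single-block form of combinatorial free-Boolean independence is used, and it is what makes the count $N^{|\pi|}$ (hence the identification of the surviving pair partitions) correct.
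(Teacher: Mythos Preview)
Your argument is correct. The paper, however, takes a shorter route: instead of expanding the moment $\mu_{S_N}(Z_{\beta(1)}\cdots Z_{\beta(n)})$ into a sum over $f:[n]\to[N]$ and then reorganizing via the moment--cumulant formula, it works directly with the cumulants of $S_N$. Using multilinearity together with the additivity of free-Boolean cumulants over independent summands (the proposition immediately following the definition of combinatorial free-Boolean independence), one gets
\[
\kappa_{\chi_\beta,1_{[m]}}(S_{N,\beta(1)},\dots,S_{N,\beta(m)})=N^{-m/2}\sum_{n=1}^N \kappa_{\chi_\beta,1_{[m]}}(z_{n,\beta(1)},\dots,z_{n,\beta(m)}),
\]
and then hypotheses (1), (3), (2) dispose of the cases $m=1$, $m=2$, $m>2$ in one line each; the conclusion follows since moments are polynomials in cumulants. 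What you do is essentially an unrolled version of this: your restriction to $\pi\le\ker f$ and the count $N^{|\pi|}$ is precisely the computation that underlies cumulant additivity, carried out at the moment level. Your approach has the advantage of being self-contained (you do not need to invoke the additivity proposition separately) and of making the surviving pair-partition structure in $INC(\chi_\beta)$ explicit; the paper's approach is more economical and shows transparently that convergence of moments follows from convergence of cumulants.
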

\begin{proof}
Let $\beta:[m]\in I\cup J$. From Remark 6.4 and the $n$-linearity of $\kappa_{\chi_\beta,1_{[m]}}$, we have
$$\kappa_{\chi_{\beta},1_{[m]}}(S_{N,\beta(1)},\cdots ,S_{N,\beta(m)})=\sum\limits_{n=1}^N {N^{-\frac{m}{2}}}\kappa_{\chi_{\beta},1_{[m]}}(z_{n,\beta(1)},\cdots ,z_{n\beta(m)}).$$
When $m=1$, we have $\kappa_{\chi_{\beta},1_{[1]}}(z_{n,\beta(1)})=\phi((z_{n,\beta(1)})=0.$\\
When $m=2$, we have $\sum\limits_{n=1}^N {N^{-1}}\kappa_{\chi_{\beta},1_{[2]}}(z_{n,\beta(1)} z_{n\beta(2)})=\sum\limits_{n=1}^N {N^{-1}}\phi(z_{n,\beta(1)} z_{n\beta(2)})=C_{\beta(1),\beta(2)}$.\\
When $m>2$,  since  $D_{k_1,\cdots,k_m}<\infty$ for all $k_1,\cdots,k_m$, $\sup\limits_{n}|\kappa_{\chi_{\beta},1_{[m]}}(z_{n,\beta(1)},\cdots, z_{n\beta(m)}) |<\infty.$ Therefore, 
$$\lim\limits_{N\rightarrow \infty}\kappa_{\chi_{\beta},1_{[m]}}(S_{N,\beta(1)},\cdots, S_{N,\beta(m)})=0.$$
The proof is complete,  because moments are polynomials of cumulants.
\end{proof}

\vspace{1cm}
{\bf Acknowledgement} The author would like to thank professor Hari Bercovici and Dr. Ping Zhong for their careful reading and suggestions.  

\vspace{1cm}

\bibliographystyle{plain}

\bibliography{references}

\noindent Department of Mathematics\\
Indiana University	\\
Bloomington, IN 47401, USA\\
E-MAIL: liuweih@indiana.edu \\

\end{document}